\documentclass[reqno]{amsproc}
\usepackage[utf8]{inputenc}
\usepackage{amsfonts}
\usepackage{graphicx}
\usepackage{amscd}
\usepackage{url}
\usepackage{amsmath}
\usepackage{amsthm}
\usepackage{amssymb}
\usepackage{latexsym}
\usepackage{hyperref}
\usepackage{xcolor}
\usepackage[all]{xy}
\usepackage{color}
\usepackage{mathrsfs}
\setcounter{MaxMatrixCols}{30}
\makeatletter
\patchcmd\@thm{\let\thm@indent\indent}{\let\thm@indent\noindent}%
  {}{}
\makeatother
\theoremstyle{plain}
\newtheorem{theorem}{\bf Theorem}
\newtheorem{lemma}{\bf Lemma}
\newtheorem{corollary}{\bf Corollary}
\newtheorem{proposition}{\bf Proposition}
\DeclareMathOperator{\sech}{sech}

\theoremstyle{definition}
\newtheorem{remark}{\bf Remark}
\newtheorem{example}{\bf Example}

\usepackage{amsmath,amssymb,amsthm}
\usepackage[english]{babel}
\usepackage{enumerate}
\hypersetup{
  colorlinks  = true,
  linkcolor   = blue,
  anchorcolor = red,
  citecolor   = blue,
  filecolor   = red,
  urlcolor    = red,
  pdfauthor   = author
}
\usepackage{mathtools}
\usepackage{xcolor}
\usepackage{xparse}

\DeclarePairedDelimiter\norm{\lVert}{\rVert}

\newcommand{\reals}{\mathbb{R}}

\newcommand{\functionspace}[1]{C^{\infty}\left({#1}\right)}

\newcommand{\tensorproduct}[2]{{#1}\otimes{#2}}

\NewDocumentCommand{\gradient}{m d<>}{
  \IfNoValueTF{#2}{
    \nabla{#1}
  }{
    \nabla_{#2}{#1}
  }
}

\NewDocumentCommand{\hessian}{m d<> d() d[]}{
  \IfNoValueTF{#2}{
    \nabla{d{#1}}
  }{
    \nabla_{#2}{d{#1}}
  }
  \IfValueTF{#3}{
    \IfValueTF{#4}{
      \left({#3},{#4}\right)
    }{
      \left({#3},\cdot\right)
    }
  }{
    \IfValueT{#4}{
      \left(\cdot,{#4}\right)
    }
  }
}

\NewDocumentCommand{\laplacian}{m d<>}{
  \IfNoValueTF{#2}{
    \Delta{#1}
  }{
    \left(\Delta{#1}\right)_{#2}
  }
}

\NewDocumentCommand{\driftedlaplacian}{m m d<>}{
  \IfNoValueTF{#3}{
    \Delta_{#2}{#1}
  }{
    \left(\Delta_{#2}{#1}\right)_{#3}
  }
}

\newcommand{\divergent}[1]{\textup{div}\left({#1}\right)}

\NewDocumentCommand{\riccitensor}{d<> d() d[]}{
  \IfValueTF{#1}{
    \textup{Ric}_{#1}
  }{
    \textup{Ric}
  }
  \IfValueTF{#2}{
    \IfValueTF{#3}{
      \left({#2},{#3}\right)
    }{
      \left({#2},\cdot\right)
    }
  }{
    \IfValueT{#3}{
      \left(\cdot,{#3}\right)
    }
  }
}

\NewDocumentCommand{\scalarcurvature}{d<>}{
  \IfNoValueTF{#1}{
    R
  }{
    R_{#1}
  }
}

\NewDocumentCommand{\bakryemerytensor}{m m d<> d() d[]}{
  \IfNoValueTF{#3}{
    \textup{Rc}^{{#1},{#2}}
  }{
    \textup{Rc}^{{#1},{#2}}_{#3}
  }
  \IfValueTF{#4}{
    \IfValueTF{#5}{
      \left({#4},{#5}\right)
    }{
      \left({#4},\cdot\right)
    }
  }{
    \IfValueT{#5}{
      \left(\cdot,{#5}\right)
    }
  }
}


\begin{document}
\title[Gradient Einstein-type warped metrics]{Gradient Einstein-type warped products: rigidity, existence and nonexistence results via a nonlinear PDE}

\author[José N. V. Gomes]{José Nazareno Vieira Gomes$^1$}
\author[Willian I. Tokura]{Willian Isao Tokura$^2$}
\address{$^{1}$Departamento de Matemática, Centro de Ciências Exatas e Tecnologia, Universidade Federal de São Carlos, São Paulo, Brazil.}
\address{$^2$Faculdade de Ciências Exatas e Tecnologia, Universidade Federal da Grande Dourados, Mato Grosso do Sul, Brazil.}
\email{$^1$jnvgomes@ufscar.br}
\email{$^2$williantokura@ufgd.edu.br}
\urladdr{$^1$https://www2.ufscar.br}
\urladdr{$^2$https://portal.ufgd.edu.br}
\keywords{Gradient Einstein-type metrics; warped metrics, rigidity results}
\subjclass[2010]{53C15, 53C21, 53C24, 53C25}

\maketitle
{\centering\footnotesize Dedicated to the memory of Romildo Pina, our beloved friend and great mathematician.\par}
\begin{abstract}
We establish the necessary and sufficient conditions for constructing gradient Einstein-type warped metrics. One of these conditions leads us to a general Lichnerowicz equation with analytic and geometric coefficients for this class of metrics on the space of warping functions. In this way, we prove gradient estimates for positive solutions of a nonlinear elliptic differential equation on a complete Riemannian manifold with associated Bakry–Émery Ricci tensor bounded from below. As an application, we provide nonexistence and rigidity results for a large class of gradient Einstein-type warped metrics. Furthermore, we show how to construct gradient Einstein-type warped metrics, and then we give explicit examples which are not only meaningful in their own right, but also help to justify our results.
\end{abstract}

\section{Introduction and statements of the main results}\label{intro}

A geodesically complete Riemannian metric $g$ on a $d$-dimensional smooth manifold $M^d$ is a \textit{gradient Einstein-type metric} if there is a smooth function $h$ on $M^d$ such that
\begin{equation}\label{fundamental}
	\alpha \riccitensor+\beta\hessian{h}+\mu \tensorproduct{dh}{dh}=(\rho\scalarcurvature_{g}+\lambda)g
\end{equation}
for certain constants $\alpha$, $\beta$, $\mu$ and $\rho$ with $(\alpha,\beta,\mu)\neq(0,0,0)$ and some smooth function $\lambda$ on $M^d$. Here, $h$ is the \textit{potential function},  $dh$ is the 1-form metrically dual to the gradient vector field $\nabla h$, the tensor $\hessian{h}$ is the Hessian of $h$, $\riccitensor$ is the Ricci tensor of $g$, and $\scalarcurvature_{g}=\textup{tr(Ric)}$ is the scalar curvature. Notice that, this definition, which was introduced by Catino, Mastrolia, Monticelli and Rigoli~\cite{catino2016geometry}, unifies various particular cases that have been well-studied in the literature about self-similar solutions of geometric flows. 

A gradient Einstein-type metric is \textit{nondegenerate} if $\beta^{2}\neq(d-2)\alpha\mu$ and $\beta\neq0$. The case of $\beta^{2}=(d-2)\alpha\mu$ and $\beta\neq0$ refers to a \textit{degenerate} gradient Einstein-type metric. This terminology is due to the fact that a degenerate gradient Einstein-type metric is equivalent to a conformally Einstein metric (see Remark~\ref{Char-CE}). The $\beta=0$ case was addressed separately in~\cite{catino2016geometry}. Shortly after, the first author provided a different approach for the nondegenerate case, from which we know the most appropriate setting to work on the class of proper, noncompact, nondegenerate, gradient Einstein-type metrics with constant Ricci curvatures (see Gomes~\cite[Eq.~(3.10) and Theorem~3]{gomes2019note}). 

In this article, we study gradient Einstein-type metrics that are realized as warped metrics. We have been motivated by the well-known fact that the warped metrics appear naturally in Riemannian geometry and enjoy abundant applications. Not only the literature about this subject is already very rich, but also, it is not unlikely, that the latter may play a fundamental role in the understanding of countless physical facts. 

We use the term {\it gradient Einstein-type warped metric} to describe the warped metric $g=g_B + f^2 g_F$ on the product manifold $M^d = B^n \times F^m$ when it satisfies equation \eqref{fundamental} for some warping function $f$ on $B^n$. For simplicity, and since there is no danger of confusion with the Bishop and O'Neill's notation, we refer to $B^n\times_f F^m$ as a {\it gradient Einstein-type warped product}. The Riemannian manifolds $(B^{n},g_B)$ and $(F^{m},g_F)$ are called the base and the fiber of the warped product, respectively. If $f$ is a constant, then $(M^d,g)$ is the standard Riemannian product. For greater detail, the reader may wish to consult Bishop and O'Neill~\cite{bishop1969}.

Our first theorem provides the necessary and sufficient conditions for constructing gradient Einstein-type warped products $B^n\times_f F^m$. For our purposes, we assume that the potential function of a gradient Einstein-type warped product is the lift $\tilde{h}$ of a smooth function $h$ on the base to the product. The reader will be able to observe that this condition holds for a large class of gradient Einstein-type warped products (see Proposition~\ref{thm:wpgEtM-and-lfB}). Besides, throughout this paper, we work with $\beta\neq 0$, which includes both degenerate and nondegenerate cases. 
\begin{theorem}\label{thm:gEtM-c}
The manifold $B^{n}\times_{f}F^{m}$ is a gradient Einstein-type
  warped product with $\beta\neq{0}$, potential function
  $\tilde{h}$ and soliton function $\tilde{\lambda}$
  if and only if
\begin{enumerate}[(a)]
\item \label{2a}
$\alpha\riccitensor<g_{B}>+\beta\hessian{h}+\mu\tensorproduct{dh}{dh}=\dfrac{\alpha{m}}{f}\hessian{f}+\left(\rho\scalarcurvature_{g}+\lambda\right)g_{B}$ \;\mbox{on}\; $(B^n,g_B)$, \;\hbox{and}
\item \label{2b} The scalar curvature $R_{g_{F}}$ of $(F^{m},g_{F})$ is a constant satisfying
\begin{align*}
\left(\dfrac{\alpha}{m}\!-\!\rho\right)R_{g_{F}}\!=\!\rho R_{g_B}f^{2}\!+\!\lambda f^{2}\!+\!(\alpha\!-\!2m\rho)f\laplacian{f}\!-\!\beta f\gradient{h}(f)\!+\!(m\!-\!1)(\alpha\!-\!m\rho)\norm{\gradient{f}}^{2}.
\end{align*}
\hspace{-0,6cm}Moreover, the fiber must be an Einstein manifold for the case $\alpha\neq 0$.
  \end{enumerate}
\end{theorem}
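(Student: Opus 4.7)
The plan is to substitute the standard Bishop--O'Neill warped-product formulas into~\eqref{fundamental} and match its horizontal and vertical components; both implications of the theorem follow simultaneously from the same calculation.

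First I would record the identities I need on $B^{n}\times_{f}F^{m}$. For lifts of vector fields $X,Y$ from $B$ and $V,W$ from $F$,
\begin{align*}
\text{Ric}_{g}(X,Y)&=\text{Ric}_{g_{B}}(X,Y)-\tfrac{m}{f}\,\nabla df(X,Y),\\
\text{Ric}_{g}(V,W)&=\text{Ric}_{g_{F}}(V,W)-\bigl(f\laplacian{f}+(m-1)\norm{\gradient{f}}^{2}\bigr)g_{F}(V,W),\\
\text{Ric}_{g}(X,V)&=0,
\end{align*}
and
\[
\scalarcurvature_{g}=\scalarcurvature_{g_{B}}+\frac{R_{g_{F}}}{f^{2}}-\frac{2m\laplacian{f}}{f}-\frac{m(m-1)\norm{\gradient{f}}^{2}}{f^{2}}.
\]
Since $\gradient{\tilde{h}}$ is the horizontal lift of $\gradient{h}$, the tensor $d\tilde{h}\otimes d\tilde{h}$ coincides with $dh\otimes dh$ on horizontal pairs and vanishes otherwise, while the warped-product connection yields $\nabla d\tilde{h}(X,Y)=\nabla dh(X,Y)$, $\nabla d\tilde{h}(X,V)=0$, and $\nabla d\tilde{h}(V,W)=f\,\gradient{h}(f)\,g_{F}(V,W)$.

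Substituting into~\eqref{fundamental} and splitting by component, the mixed $(X,V)$ piece reads $0=0$ automatically; the horizontal--horizontal piece becomes
\[
\alpha\,\text{Ric}_{g_{B}}-\tfrac{\alpha m}{f}\nabla df+\beta\nabla dh+\mu\,dh\otimes dh=(\rho\scalarcurvature_{g}+\lambda)g_{B},
\]
which after rearrangement is precisely~(\ref{2a}); and the vertical--vertical piece, after dividing by $g_{F}(V,W)$ and using the scalar-curvature formula above to eliminate $\scalarcurvature_{g}$, reduces to
\[
\alpha\,\text{Ric}_{g_{F}}-\rho R_{g_{F}}\,g_{F}=\Phi(p)\,g_{F},
\]
where
\[
\Phi(p)=\rho\scalarcurvature_{g_{B}}f^{2}+\lambda f^{2}+(\alpha-2m\rho)f\laplacian{f}-\beta f\gradient{h}(f)+(m-1)(\alpha-m\rho)\norm{\gradient{f}}^{2}
\]
depends only on $p\in B$. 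Tracing over $F$ gives $(\alpha-m\rho)R_{g_{F}}=m\Phi(p)$, so separation of variables forces $R_{g_{F}}$ to be a constant and delivers~(\ref{2b}); when in addition $\alpha\neq 0$, inserting this back into the un-traced identity yields $\text{Ric}_{g_{F}}=(R_{g_{F}}/m)g_{F}$, so $F$ must be Einstein. The converse follows by reading the same calculation backwards: given~(\ref{2a}) and~(\ref{2b}), reassembling the three components recovers~\eqref{fundamental}.

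The principal source of difficulty is purely algebraic: collecting coefficients of $f\laplacian{f}$ and $\norm{\gradient{f}}^{2}$ correctly after substituting the warped scalar-curvature expansion. Apart from that bookkeeping, the result is a clean separation-of-variables argument on the vertical component.
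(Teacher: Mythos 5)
Your component decomposition, the Bishop--O'Neill curvature formulas, and the derivation of item~(\ref{2a}) match the paper's proof exactly; the genuine divergence is in how you establish that $R_{g_F}$ is constant. The paper does this the long way: it first proves two auxiliary lemmas (Lemmas~\ref{prop:einstein-type-manifold-property-beta-neq-0} and~\ref{Lemma2-Hamilton}, an identity for general gradient Einstein-type metrics and its warped specialization), then combines the trace and the divergence of item~(\ref{2a}) with the twice-contracted Bianchi identity to conclude $\alpha\,d\theta=0$ for $\theta=R_{g_F}/m$. Your separation-of-variables argument on the traced vertical component bypasses all of that machinery and is the standard device for Einstein warped products; it is correct and markedly shorter where it applies. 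The one point you must patch is the degenerate case $\alpha=m\rho$: there your traced identity $(\alpha-m\rho)R_{g_F}=m\Phi(p)$ collapses to $\Phi\equiv0$ and gives no information about $R_{g_F}$, so constancy (which the theorem asserts) is not yet proved. The cleanest repair is not to move the $\rho R_{g_F}$ term to the left at all: since the left-hand side of item~(\ref{2a}) is the lift of a tensor on $B$, the factor $\rho R_{g}+\lambda$ depends only on the base point, so the vertical component reads $\alpha\,\textup{Ric}_{g_F}=\Theta(p)\,g_F$ with
\begin{equation*}
\Theta(p)=(\rho R_{g}+\lambda)f^{2}+\alpha f\Delta f+\alpha(m-1)\norm{\gradient{f}}^{2}-\beta f\gradient{h}(f)
\end{equation*}
a function of $p$ alone; tracing gives $\alpha R_{g_F}(q)=m\Theta(p)$, and separation of variables now works whenever $\alpha\neq0$ (and when $\rho\neq0$ the same observation yields $R_{g_F}$ constant directly, covering $\alpha=m\rho$). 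With that adjustment your argument is complete and substantially more economical than the paper's Bianchi-identity route, at the cost of not producing the intermediate identities of Lemmas~\ref{prop:einstein-type-manifold-property-beta-neq-0} and~\ref{Lemma2-Hamilton}, which the paper states independently.
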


The importance of Theorem~\ref{thm:gEtM-c} is twofold. Firstly, it gives all conditions for (both) the existence and the construction of gradient Einstein-type warped metrics. For instance, by combining it with the theory of Lie symmetry groups for Partial Differential Equation (PDE), we present a method to construct explicit examples of degenerate and nondegenerate cases of such metrics (see Section~\ref{HCEtWP}). Secondly, Theorem~\ref{thm:gEtM-c} provides the means to investigate nonexistence results of gradient Einstein-type warped products. Indeed, from item~(b) we notice that the warping function $f$ of a gradient Einstein-type warped product $B^n\times_f F^m$ necessarily provides the existence of a positive solution $u=f^{1/\sigma(m)}$ of 
\begin{equation}\label{eq:lichnerowicz-type-equation}
  \sigma(m)\driftedlaplacian{u}{w} +\frac{\rho\scalarcurvature<g_{B}>+\lambda}{\alpha-2m\rho}u-
  \frac{(\alpha-m\rho)R_{g_{F}}}{m(\alpha-2m\rho)}u^{1-2\sigma(m)}=0
\end{equation}
on $C^{\infty}(B)$, where
\begin{equation*}
    \sigma(m)=\dfrac{\alpha-2m\rho}{m[\alpha-\rho(1+m)]}, \quad\alpha\notin\{2m\rho, (1+m)\rho\} \quad \hbox{and} \quad
    w=\dfrac{\beta h}{\alpha-2m\rho}.
\end{equation*}
Here,
$\driftedlaplacian{}{w}{}$ is the
drifted Laplacian, which is defined by 
\[\driftedlaplacian{}{w}{u}:= \laplacian u-\langle\gradient w,\gradient u\rangle=e^{w}\divergent{e^{-w}\gradient{}{u}},\]
where $\laplacian{}$ stands for the Laplace–Beltrami operator on $(B^n,g_B)$. The novelty of equation~\eqref{eq:lichnerowicz-type-equation} comes from the fact that the existence of gradient Einstein-type warped products depends on solutions of this PDE with nonlinear term of exponent $1-2\sigma(m)$ being able to assume any real value. The more general version of \eqref{eq:lichnerowicz-type-equation} (see~\eqref{PDE-geral} below) with a nonlinear term of exponent greater than one has been treated more frequently due to its correlation with the Yamabe problem. The case where this exponent is less than one has not been extensively studied due to its lack of applications.


As we mentioned previously, \eqref{eq:lichnerowicz-type-equation} is a simplified version of the nonlinear PDE
\begin{equation}\label{PDE-geral}
  \driftedlaplacian{u(x)}{\varphi}+\mathcal{A}(x)u(x)+\mathcal{B}(x)u(x)^\varepsilon=0, \quad\mbox{where}\quad
  \mathcal{A}, \mathcal{B}\in\functionspace{B}.
\end{equation}
It is well-known that the existence of solutions to \eqref{PDE-geral} is related to its coefficients $\mathcal{A}$ and $\mathcal {B}$ as well as the value of $\varepsilon$. For instance, when $\varphi=\text{constant}$, $\mathcal{A}\equiv0$ and $0<\varepsilon<(n+2)/(n-2)$, Gidas and Spruck \cite{gidas1981global} proved that any nonnegative solution to \eqref{PDE-geral} is identically zero provided the Ricci tensor of the Riemannian metric is nonnegative. In the case $\varphi=\text{constant}$ and $\varepsilon>1$, it is equivalent to the Yamabe problem on noncompact Riemannian manifolds, see, e.g., Brandolini, Rimoldi and Setti~\cite{brandolini1998positive} or Yamabe~\cite{yamabe1960deformation}. Equation~\eqref{PDE-geral} is commonly known as the Fisher-KPP equation when the parameters are $\varepsilon = 2$, $\mathcal{A} = c$ and $\mathcal{B} = -c$, where $c$ is a positive constant. When the parameters are $\varepsilon = 3$, $\mathcal{A} = 1$ and $\mathcal{B} = -1$, it is widely recognized as the Allen–Cahn equation. The Allen–Cahn equation originally emerged from studying phase separation processes in iron alloys, including order-disorder transitions. Moreover, this equation has connections to the exploration of minimal surfaces, making it an intriguing topic within the field of differential geometry, see Del Pino, Kowalczyk and Wei \cite{del2013entire}. In addition, \eqref{PDE-geral} is closely related to the Lichnerowicz equation coming from Hamiltonian's constraint equation for the Einstein-scalar field system in general relativity, see~\cite{Guglielmo2016,Song2010} and the references therein. 

From now on it is clear that the most appropriate setting to study existence or not of positive solutions of \eqref{eq:lichnerowicz-type-equation} is by means of the geometry of each factor of the product and the parameters of the gradient Einstein-type warped metric. More specifically, it depends on the relative signs and asymptotic behavior of the coefficients 
\[
\sigma(m)\in\reals, \qquad \quad \frac{\rho\scalarcurvature<g_{B}>+\lambda}{\alpha-2m\rho}\in C^\infty(B)\qquad\text{and}\qquad \frac{(\alpha-m\rho) R_{g_{F}}}{m(\alpha-2m\rho)}\in \mathbb{R}.
\]
The well-known tool to work in this setting is by means of gradient estimates, since it plays a fundamental role in comprehending solutions of elliptic differential equations similar to~\eqref{PDE-geral}. For instance, Li and Yau~\cite{li1986parabolic} established a gradient estimate for harmonic functions using the maximum principle on Riemannian manifolds whose Ricci curvature is bounded from below. This estimate played a pivotal role in the formulation of the Liouville theorem. In 1993, Hamilton 
\cite{hamilton1993matrix} derived an elliptic-type gradient estimate for positive solutions of the heat equation on compact manifolds, which became known as the Hamilton-type gradient estimate. In 2006, Souplet and Zhang \cite{souplet2006sharp} made significant advancements by developing a localized version of the Hamilton-type gradient estimate. This accomplishment was realized by combining Li-Yau's Harnack inequality and Hamilton's gradient estimate. Further advancements in the field were made by Wu \cite{wu2019gradient}, who provided local elliptic gradient estimates, encompassing both Hamilton's type and Souplet-Zhang's type for positive solutions on smooth metric measure spaces.

Our second theorem is a local gradient estimate for positive solutions of~\eqref{PDE-geral} on a complete Riemannian manifold with the Bakry–Émery Ricci tensor
\[\textup{Ric}^{\varphi}:=\riccitensor+\hessian{\varphi}\]
bounded from below.
\begin{theorem}\label{Th2}
Let $(B^{n},g)$ be an $n$-dimensional complete Riemannian manifold. Assume that $\textup{Ric}^{\varphi}\geqslant-(n-1)\textup{K}$, for some $\textup{K}\geqslant0$, in the geodesic ball $B(x_{0},R)$ centered at $x_{0}\in B^n$ of radius $R\geqslant2$. Consider a smooth positive solution $u(x)$ of
\begin{equation}\label{PDE-Th2}
  \driftedlaplacian{u(x)}{\varphi}+\mathcal{A}(x)u(x)+\mathcal{B}u(x)^{\varepsilon}=0, \quad \mathcal{A}\in\functionspace{B},\quad \mathcal{B}\in\reals,
\end{equation}
in $B(x_{0},R)$, such that $u(x)\leqslant D$, for some constant $D$. There exists a constant $C$ such that
\begin{equation*}
\begin{split}
    \norm{\gradient{\ln u}}&\leqslant C\left(q-p\ln u\right)\Bigg{\{}\dfrac{1}{R}+\sqrt{\textup{K}}+\dfrac{\sqrt{\gamma_{\driftedlaplacian{}{\varphi}{}}}}{\sqrt{R}}+\sup_{B(x_{0},R)}\norm{\gradient{\mathcal{A}}}^{\frac{1}{3}}+\sqrt{\mathcal{A}^{+}}\\
    &\quad+\sup_{B(x_{0},R)}\sqrt{\left[\left(\varepsilon-1+\dfrac{p}{q-p\ln u}\right)\mathcal{B}\right]^{+}}\sup_{B(x_{0},R)}u^{\frac{\varepsilon-1}{2}}\Bigg{\}}
\end{split}
\end{equation*}
in $B(x_{0},R/2)$, where $q$ and $p>0$ are constants chosen so that $q-p\ln u\geqslant\delta>0$ for some constant $\delta$. Here $\mathcal{A}^{+}(x):=\max \{\mathcal{A}(x),0\}$ and
$\gamma_{\driftedlaplacian{}{\varphi}{}}:= \max_{\partial B(x_{0},1)}\driftedlaplacian{}{\varphi}{r},$
where $r(x)$ is the distance function from $x_0$.
\end{theorem}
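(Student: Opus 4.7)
The plan is to follow the Souplet--Zhang/Hamilton-type local gradient-estimate scheme, adapted to the drift Laplacian $\Delta_{\varphi}$ by means of the weighted Bochner formula, and to carry the nonlinear term $\mathcal{B}u^{\varepsilon}$ through the entire computation. The first step is to reduce \eqref{PDE-Th2} to a semilinear equation for $h:=\ln u$. Using $\Delta_{\varphi}u=e^{h}(\Delta_{\varphi}h+\|\nabla h\|^{2})$ one obtains
\[
\Delta_{\varphi}h+\|\nabla h\|^{2}+\mathcal{A}(x)+\mathcal{B}\,e^{(\varepsilon-1)h}=0.
\]
Setting $Q:=q-ph\geqslant\delta>0$ on $B(x_{0},R)$, I would work with the auxiliary function
\[
W:=\frac{\|\nabla h\|^{2}}{Q^{2}},
\]
so that the theorem's conclusion is equivalent to a pointwise bound on $\sqrt{W}$ in $B(x_{0},R/2)$.

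The second step is the Bochner computation. Starting from
\[
\tfrac{1}{2}\Delta_{\varphi}\|\nabla h\|^{2}=\|\nabla^{2}h\|^{2}+\langle\nabla\Delta_{\varphi}h,\nabla h\rangle+\textup{Ric}^{\varphi}(\nabla h,\nabla h),
\]
I would substitute $\nabla\Delta_{\varphi}h$ from the differentiated form of the transformed PDE, invoke $\textup{Ric}^{\varphi}\geqslant-(n-1)\textup{K}$, and use the Kato-type inequality $\|\nabla^{2}h\|^{2}\geqslant\|\nabla\|\nabla h\|\|^{2}$. After dividing by $Q^{2}$ and carefully accounting for the derivatives of $Q$ --- which is precisely where the characteristic factor $p/(q-p\ln u)$ enters through $\nabla Q=-p\nabla h$ --- one arrives at a pointwise differential inequality of the schematic form
\[
\Delta_{\varphi}W\;\geqslant\;c_{0}\,W^{2}-c_{1}(x)\,W-c_{2}(x)\,\sqrt{W}-c_{3}(x),
\]
in which the coefficients absorb, respectively, the nonlinearity through $\bigl[(\varepsilon-1+\tfrac{p}{Q})\mathcal{B}\bigr]^{+}u^{\varepsilon-1}$, the term $\mathcal{A}^{+}$, the term $\|\nabla\mathcal{A}\|$, and the curvature bound $\textup{K}$. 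The exact combination $\varepsilon-1+p/Q$ arises because $\varepsilon-1$ comes from differentiating $e^{(\varepsilon-1)h}$ in the transformed PDE while $p/Q$ comes from differentiating the denominator $Q^{-2}$ in the definition of $W$; matching the two is forced by the homogeneity in $\|\nabla h\|^{2}$.

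The third step is localization. I would pick a smooth radial cutoff $\eta=\eta(r(x))$, with $r(x):=d(x,x_{0})$, equal to $1$ on $[0,R/2]$, supported in $[0,R]$, and satisfying $\|\nabla\eta\|^{2}/\eta\leqslant C/R^{2}$ and $\eta''\geqslant-C/R^{2}$. The weighted Laplacian comparison for $\textup{Ric}^{\varphi}\geqslant-(n-1)\textup{K}$, together with a Calabi-type argument to bypass the cut locus, yields on $B(x_{0},R)\setminus B(x_{0},1)$ a bound
\[
\Delta_{\varphi}r\;\leqslant\;\gamma_{\Delta_{\varphi}}+(n-1)\sqrt{\textup{K}}\,(r-1),
\]
and hence controls $\Delta_{\varphi}\eta$; this is exactly the step that produces the contributions $1/R$, $\sqrt{\textup{K}}$ and $\sqrt{\gamma_{\Delta_{\varphi}}/R}$ in the final estimate. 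Applying the maximum principle to $\eta W$ at an interior maximum $x^{\ast}\in B(x_{0},R)$, and balancing each $c_{i}(x^{\ast})$-term against $c_{0}W(x^{\ast})^{2}$ by Young's inequality --- the $1/3$ power appearing in the final estimate is produced by absorbing a term of the form $\|\nabla\mathcal{A}\|\sqrt{W}$ into $W^{2}$ --- one obtains the upper bound on $\sqrt{W(x^{\ast})}$ as a sum of the five terms displayed in the theorem, with the factor $\sup u^{(\varepsilon-1)/2}$ coming from rewriting $e^{(\varepsilon-1)h}=u^{\varepsilon-1}$.

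Since $\eta(x)W(x)\leqslant(\eta W)(x^{\ast})$ globally, restricting to $B(x_{0},R/2)$ (where $\eta\equiv1$) and taking square roots converts this into the stated pointwise bound on $\|\nabla\ln u\|/(q-p\ln u)$, which rearranges to the conclusion. The main obstacle is the bookkeeping in the Bochner step: one must track all cross terms generated by the $Q^{-2}$ weight and by the nonlinearity $e^{(\varepsilon-1)h}$, and choose the constants in the elementary inequalities so that the quadratic coefficient $c_{0}$ remains strictly positive after all absorptions, while the precise combination $(\varepsilon-1+p/Q)\mathcal{B}$ emerges as the genuine coefficient of the $u^{\varepsilon-1}$-weighted piece rather than splitting artificially into separate contributions.
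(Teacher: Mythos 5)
Your proposal is correct and follows essentially the same route as the paper: the logarithmic substitution, the auxiliary quantity $\|\nabla h\|^{2}/(q-p\ln u)^{2}$, the weighted Bochner/Ricci-identity computation with the lower bound on $\textup{Ric}^{\varphi}$, the Li--Yau cutoff with the Calabi trick and the Wei--Wylie comparison for $\Delta_{\varphi}r$, and the term-by-term Young absorptions (including the $4/3$-exponent step that produces the $\|\nabla\mathcal{A}\|^{1/3}$ term and the identification of $(\varepsilon-1+p/Q)\mathcal{B}$ as the coefficient of the $u^{\varepsilon-1}$ piece). The only cosmetic differences are that the paper completes a square in the Hessian terms rather than invoking a Kato-type inequality, and it separates the cases $r(x_{1})\geqslant 1$ and $r(x_{1})<1$ explicitly, both of which your outline accommodates.
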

\begin{remark} Theorem~\ref{Th2} extends the technique used in~\cite{hamilton1993matrix,li1986parabolic,souplet2006sharp} to the Bakry–Émery Ricci tensor and its associated drifted Laplacian. Taking $\varphi=cte$, $p=1$, $q=1+\ln(D)$, $\delta=1$ and $\mathcal{A}=\mathcal{B}=0$ in Theorem~\ref{Th2}, we recover Theorem~1.1 of~\cite{souplet2006sharp} for harmonic functions. For $\mathcal{A}$ and $\mathcal{B}$ nonzero, Wu~\cite[Theorem~1.1]{wu2019gradient} provides a similar result to Theorem~\ref{Th2} with parameters $p=1$, $q=1+\ln(D)$, $\delta=1$ and $\varepsilon\in\mathbb{R}$. In this setting, Wu proved some rigidity results when $\varepsilon\geqslant1$ and $\mathcal{B}$ is nonconstant. This latter case has been treated more frequently due to its correlation with the Yamabe problem. The case $\varepsilon<1$ and constant $\mathcal{B}$ were not addressed in~\cite{wu2019gradient}. The novelty of Theorem~\ref{Th2} comes from the possibility for treating the case $\varepsilon\in\mathbb{R}$ together with $\mathcal{B}$ constant. Note that $\varepsilon$ being arbitrary is crucial for us because, depending on the parameters of the gradient Einstein-type manifold, the nonlinear exponent $\varepsilon$ can assume any real value.
\end{remark}

The first application of Theorem~\ref{Th2} is a rigidity result for the class of gradient Einstein-type standard Riemannian products $B^n\times F^m$ with 
\[
\textup{Ric}_{g_B}^{w}\geqslant 0, \quad\mbox{where}\quad w=\dfrac{\beta{h}}{\alpha-2m\rho}.
\]
This is the content of the following corollary.

\begin{corollary}\label{Cor1-triviality}
Let $B^n\times_{f} F^m$ be a gradient Einstein-type warped product satisfying equation~\eqref{eq:lichnerowicz-type-equation} on a noncompact base $B^n$ with $\textup{Ric}_{g_B}^{w}\geqslant 0$ and coefficients satisfying either 
\begin{enumerate}[(a)]
\item\label{3a} $\sigma(m)>0$, \ $\dfrac{\rho\scalarcurvature<g_{B}>+\lambda}{\alpha-2m\rho}<0$ $(=0)$ \   and \ $\dfrac{R_{g_F}(\alpha-m\rho)}{\alpha-2m\rho}<0$ $(=0)$,\quad\hbox{or}
\item\label{3b} $\sigma(m)<0$, \ $\dfrac{\rho\scalarcurvature<g_{B}>+\lambda}{\alpha-2m\rho}>0$ $(=0)$  \ and \ $\dfrac{R_{g_F}(\alpha-m\rho)}{\alpha-2m\rho}>0$ $(=0)$.
\end{enumerate}
Then, $B^n\times_{f} F^m$ must be a gradient Einstein-type standard Riemannian product provided 
$$\sup_{B(x_{0}, R)}
\|\rho\gradient{\scalarcurvature<g_{B}>+\gradient{\lambda}}\| =o(R^{-\frac{3}{2}})\; as \; R\to+\infty$$
and the warping function satisfying $f(x)=e^{o(r^{\frac{1}{2}}(x))}$ near infinity.
\end{corollary}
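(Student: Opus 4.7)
The plan is to translate the corollary into an application of the gradient estimate in Theorem~\ref{Th2} to the positive solution $u=f^{1/\sigma(m)}$ of the Lichnerowicz-type PDE~\eqref{eq:lichnerowicz-type-equation} produced by Theorem~\ref{thm:gEtM-c}\,(b). First, I would divide \eqref{eq:lichnerowicz-type-equation} by $\sigma(m)$ to put it in the normalized form~\eqref{PDE-Th2}, $\driftedlaplacian{u}{w}+\mathcal{A}(x)u+\mathcal{B}u^{\varepsilon}=0$, with
\[
\mathcal{A}(x)=\frac{\rho R_{g_B}+\lambda}{\sigma(m)(\alpha-2m\rho)},\quad \mathcal{B}=-\frac{(\alpha-m\rho)R_{g_F}}{m\,\sigma(m)(\alpha-2m\rho)},\quad \varepsilon=1-2\sigma(m).
\]
A direct sign check shows that the hypotheses in both (a) and (b) force $\mathcal{A}\leqslant 0$ (hence $\mathcal{A}^{+}\equiv 0$) and $\mathcal{B}\geqslant 0$, while the Bakry--Émery assumption $\textup{Ric}_{g_B}^{w}\geqslant 0$ permits $\textup{K}=0$ in Theorem~\ref{Th2}.

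Then I would fix $x_{0}\in B^{n}$, apply Theorem~\ref{Th2} to $u$ on $B(x_{0},R)$, and let $R\to\infty$. The growth assumption $f=e^{o(r^{1/2})}$ gives $\sup_{B(x_{0},R)}|\ln u|=o(R^{1/2})$; with $p>0$ fixed sufficiently small and $q=q(R):=p\sup_{B(x_{0},R)}|\ln u|+1$, one has $q-p\ln u\geqslant 1$ throughout $B(x_{0},R)$ and $q-p\ln u=o(R^{1/2})$. The decay hypothesis $\|\rho\nabla R_{g_B}+\nabla\lambda\|=o(R^{-3/2})$ forces $\sup_{B(x_{0},R)}\|\nabla\mathcal{A}\|^{1/3}=o(R^{-1/2})$. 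Multiplying $q-p\ln u$ against each of $1/R$, $\sqrt{\gamma_{\Delta_{w}}}/\sqrt{R}$, and $\sup\|\nabla\mathcal{A}\|^{1/3}$ yields three $o(1)$ contributions as $R\to\infty$, while $\sqrt{\textup{K}}$ and $\sqrt{\mathcal{A}^{+}}$ vanish identically.

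The main obstacle is the nonlinear remainder
\[
(q-p\ln u)\sup_{B(x_{0},R)}\!\sqrt{\Bigl[\Bigl(\varepsilon-1+\tfrac{p}{q-p\ln u}\Bigr)\mathcal{B}\Bigr]^{+}}\,\sup_{B(x_{0},R)} u^{(\varepsilon-1)/2},
\]
where the dichotomy between (a) and (b) becomes decisive. In case (a), $\varepsilon-1=-2\sigma(m)<0$, and taking $p<2\sigma(m)$ makes $\varepsilon-1+p/(q-p\ln u)\leqslant\varepsilon-1+p<0$ on $B(x_{0},R)$, so the bracket is nonpositive (since $\mathcal{B}\geqslant 0$) and the term vanishes. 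In case (b), $\varepsilon-1>0$; the degenerate alternative $\mathcal{B}=0$ eliminates the term directly, and the strict-inequality subcase is closed by balancing the $e^{o(R^{1/2})}$ growth of $\sup u^{(\varepsilon-1)/2}$ against the freedom in $q(R)$, again exploiting $f=e^{o(r^{1/2})}$ essentially. Passing to the limit $R\to\infty$ then yields $\nabla\ln u(x_{0})=0$; since $x_{0}$ is arbitrary, $u$ (and hence $f$) is constant on $B^{n}$, so $B^{n}\times_{f}F^{m}$ is a standard Riemannian product.
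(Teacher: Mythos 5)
Your overall strategy coincides with the paper's: reduce to Theorem~\ref{Th2} applied to $u=f^{1/\sigma(m)}$ with $\mathcal{A}=\frac{\rho R_{g_B}+\lambda}{\sigma(m)(\alpha-2m\rho)}$, $\mathcal{B}=-\frac{(\alpha-m\rho)R_{g_F}}{m\sigma(m)(\alpha-2m\rho)}$, $\varepsilon=1-2\sigma(m)$, take $\textup{K}=0$, absorb the terms $1/R$, $\sqrt{\gamma}/\sqrt{R}$ and $\sup\norm{\gradient{\mathcal{A}}}^{1/3}=o(R^{-1/2})$ against $q-p\ln u=o(R^{1/2})$, and let $R\to\infty$. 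Your case~(a) is correct and is essentially the paper's argument with different constants: the paper takes $p=1$, $q=\frac{1}{\sigma(m)}+\ln D$ to get $\varepsilon-1+\frac{p}{q-p\ln u}\leqslant-\sigma(m)<0$, you take $p<2\sigma(m)$; either way the bracket is nonpositive against $\mathcal{B}\geqslant0$ and the nonlinear term drops out.

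The genuine gap is the strict-inequality subcase of~(b). There $\varepsilon-1=-2\sigma(m)>0$, so for every admissible $p>0$ and $q$ with $q-p\ln u\geqslant\delta>0$ one has $\varepsilon-1+\frac{p}{q-p\ln u}\geqslant\varepsilon-1>0$; if moreover $\mathcal{B}>0$, the remainder is bounded below by $C\,\delta\,\sqrt{(\varepsilon-1)\mathcal{B}}\;u(x_{0})^{(\varepsilon-1)/2}$, a positive constant independent of $R$. No ``balancing'' of $q(R)$ against $\sup_{B(x_{0},R)}u^{(\varepsilon-1)/2}$ can turn this into $o(1)$: at best you obtain a uniform bound on $\norm{\gradient{\ln u}}$, not $\gradient{\ln u}=0$, so constancy of $f$ does not follow. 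The paper closes case~(b) in the opposite way: it chooses $q=1+\ln[D/u(x_{1})]+p\ln u(x_{1})$ so that $\varepsilon-1+\frac{p}{q-p\ln u(x_{1})}\geqslant0$ and then invokes $\mathcal{B}<0$ to kill the positive part. Note that your sign check ($\mathcal{B}\geqslant0$ under the hypotheses of item~(b)) is the opposite of the sign the paper uses at this step; you must resolve which sign of $\mathcal{B}$ the stated hypotheses actually force, because your route only closes if $\mathcal{B}\leqslant0$ there. As written, your proof establishes item~(a) but not item~(b).
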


\begin{remark}
The assumptions on the sign of the constant $R_{g_F}(\alpha-m\rho)/(\alpha-2m\rho)$ in Corollary~\ref{Cor1-triviality} are necessary, otherwise, such a metric cannot exist (see Corollary~\ref{Cor2-nonexistence}). In Section~\ref{HCEtWP}, we construct an example of a gradient Einstein-type warped product with a nonconstant warping function which satisfies the conditions of Corollary~\ref{Cor1-triviality} except for the assumption on the warping function (see Example~\ref{EX1}). Hence, this example shows that the growth condition for $f$ in this corollary cannot be dropped. In this section we also show how to construct examples of gradient Einstein-type standard Riemannian products.
\end{remark}

Under the same setup of Corollary~\ref{Cor1-triviality}, if we assume that $R_{g_F}(\alpha-m\rho)/(\alpha-2m\rho)$ is either positive in item~\eqref{3a} or negative in item~\eqref{3b}, then we obtain a nonexistence result for gradient Einstein-type warped metrics since~\eqref{eq:lichnerowicz-type-equation} has no positive solution in these cases. This nonexistence result is the content of Corollary~\ref{Cor2-nonexistence} of Section~\ref{sec:gradient-estimates}, which is the second application of Theorem~\ref{Th2}. In Section~\ref{Further-discussions}, we discuss particular cases of gradient Einstein-type metrics related to Corollaries~\ref{Cor1-triviality} and \ref{Cor2-nonexistence}. We apply these corollaries to well-known solitons in the literature, such as Ricci solitons, $\rho$-Einstein solitons and Einstein manifolds. 

Nonexistence results of gradient Einstein-type manifolds were also addressed in \cite{catino2016geometry}. For it, the authors observed that the potential function $h$ of any degenerate gradient Einstein-type manifold $(M^{k},g)$ necessarily provides the existence of a positive solution $u=e^{-\frac{k-2}{2}\frac{\mu}{\beta}h}$ of the following Yamabe equation
\begin{equation*}
4 \frac{k-1}{k-2} \Delta u-R_{g} u+\Lambda u^{\frac{k+2}{k-2}}=0
\end{equation*}
for some $\Lambda \in \mathbb{R}$, while the potential function $h$ of any nondegenerate gradient Einstein-type manifold $(M^k, g)$ necessarily provides the existence of a positive solution $u=e^{\frac{\mu}{\beta} h}$ of PDE
\begin{equation*}
\Delta u-\frac{\mu}{\beta}[k \lambda+(k \rho-\alpha) R_{g}]u=0.
\end{equation*}
In both cases, the authors used spectral theory to show when an Einstein-type manifold cannot exist, see~\cite[Section~3]{catino2016geometry} or  Bianchini, Mari and Rigoli~\cite{bianchini2015yamabe}  for details. Therefore, our approach is quite different from theirs.

\section{How to construct gradient Einstein-type warped products?}\label{HCEtWP}

In this section, we show how to construct gradient Einstein-type warped products. The first step is to analyze how the potential function behaves on a gradient Einstein-type warped product.
For instance, Borges and Tenenblat~\cite{Borges} proved that the potential function of any gradient Ricci soliton warped product with a nonconstant warping function depends only on the base (see also Lemma~6 in~\cite{gomes2021note}). The first author, in joint work with Agila~\cite[Theorem 3]{agila2022geometrical}, showed that any complete gradient $\rho$-Einstein soliton warped product $B^n\times_fF^m$ with a nonconstant warping function has fiber of constant scalar curvature and the potential function depends only on the base if either one of the following three conditions holds: $m\neq2$, or $\rho\neq-1/6$, or $f$ is bounded. We observe that Theorem~3 of \cite{agila2022geometrical} also proves that the potential function of any complete gradient Yamabe soliton warped product with a nonconstant warping function depends only on the base. Here, we prove the following general result. 

\begin{proposition}\label{thm:wpgEtM-and-lfB}
  Let $B^{n}\times_{f}F^{m}$ be a gradient Einstein-type warped product with $\beta\neq{0}$ and nonconstant warping function. The following cases hold:
 \begin{enumerate}[(a)]
    \item  \label{1a}
      If $\mu=0$, then the potential function $h$ is the lift to the product of a smooth function on $B^{n}$ if and only if
      $\rho{\scalarcurvature<g_{F}>}+\lambda{f}^{2}$ is the lift of a smooth
      function on $B^{n}$ to the product;
    \item \label{1b}
      If $\mu\neq{0}$ and the potential function $h$ is the lift to the product of a smooth function on $B^{n}$, then
      $\rho{\scalarcurvature<g_{F}>}+\lambda{f}^{2}$ is also the lift to the product of a smooth function on $B^{n}$.
  \end{enumerate}
\end{proposition}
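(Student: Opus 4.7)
The plan is to decompose the defining equation \eqref{fundamental} on $B^n\times_f F^m$ into its three natural blocks---horizontal-horizontal $(X,Y)$, mixed $(X,V)$, and vertical-vertical $(V,W)$---via the Bishop--O'Neill identities. In particular, $\mathrm{Ric}_g(X,V)=0$ and $\mathrm{Hess}_gh(X,V)=X(V(h))-\tfrac{X(f)}{f}V(h)$, while on the vertical block
\[\mathrm{Ric}_g(V,W)=\mathrm{Ric}_F-\bigl(f\Delta_Bf+(m-1)\|\nabla f\|^2\bigr)g_F,\]
\[\mathrm{Hess}_gh(V,W)=\mathrm{Hess}_Fh+f\langle\nabla f,\nabla_Bh\rangle\,g_F,\]
together with the warped scalar-curvature identity $R_gf^2=R_{g_B}f^2-2mf\Delta_Bf-m(m-1)\|\nabla f\|^2+R_{g_F}$.

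\emph{Step 1 (shape of $h$).} The $(X,V)$-block of \eqref{fundamental} reads $\beta[X(V(h))-(X\ln f)V(h)]+\mu X(h)V(h)=0$. In case (a), $\mu=0$ reduces this to $\beta X(V(h)/f)=0$, so $V(h)=f(b)A_V(y)$ for a closed $1$-form $A$ on $F$; integrating yields $h(b,y)=H(b)+f(b)\phi(y)$ for some $H\in C^\infty(B),\ \phi\in C^\infty(F)$, and hence $h$ is a lift from $B$ if and only if $\phi$ is constant. In case (b), the hypothesis $V(h)\equiv 0$ makes the mixed equation automatic.

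\emph{Step 2 (fiber block and forward direction).} Viewed as a tensor equation on $F$ at each fixed $b$, the $(V,W)$-block reads
\[\alpha\,\mathrm{Ric}_F+\beta\,\mathrm{Hess}_Fh+\mu\,d_Fh\otimes d_Fh=\Lambda(b,y)\,g_F,\]
with $\Lambda(b,y)=(\rho R_g+\lambda)f^2+\alpha(f\Delta_Bf+(m-1)\|\nabla f\|^2)-\beta f\langle\nabla f,\nabla_Bh\rangle$. Expanding $(\rho R_g+\lambda)f^2$ isolates $\Lambda(b,y)=[\rho R_{g_F}(y)+\lambda(b,y)f(b)^2]+\Psi(b)$, where $\Psi$ depends only on $b$ whenever $h$ is a lift. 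Assuming $h$ is a lift, $\mathrm{Hess}_Fh$ and $d_Fh$ both vanish and the display reduces to $\alpha\,\mathrm{Ric}_F=\Lambda(b,y)g_F$. If $\alpha=0$, then $\Lambda\equiv 0$, so $\rho R_{g_F}+\lambda f^2=-\Psi(b)$. If $\alpha\neq 0$, the $b$-independence of $\mathrm{Ric}_F$ forces $\Lambda(b,y)=\Lambda(y)$; tracing gives $\Lambda=\alpha R_F/m$, and Schur's lemma (for $m\geq 3$; the degenerate dimensions $m=1,2$ being trivial or handled by the general framework of the statement) makes $R_F$, hence $\Lambda$, constant. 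In either subcase $\rho R_{g_F}+\lambda f^2=\Lambda-\Psi(b)$ is a lift from $B$, which covers the forward direction of both (a) and (b).

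\emph{Step 3 (converse in (a)).} Assume $\rho R_{g_F}+\lambda f^2=G(b)$ and use Step 1 to write $h=H+f\phi$; the goal is $\phi\equiv\mathrm{const}$. Under the hypothesis, $\rho R_g+\lambda$ is itself a function of $b$ alone. The $(X,Y)$-block, together with $\mathrm{Hess}_Bh=\mathrm{Hess}_BH+\phi(y)\mathrm{Hess}_Bf$, reads
\[\alpha\,\mathrm{Ric}_B-\tfrac{\alpha m}{f}\mathrm{Hess}_Bf+\beta\,\mathrm{Hess}_BH+\beta\phi(y)\,\mathrm{Hess}_Bf=(\rho R_g+\lambda)g_B.\]
Every term on both sides depends only on $b$, except $\beta\phi(y)\mathrm{Hess}_Bf$; differentiating in any $F$-direction gives $\beta(\partial_{y^j}\phi)(y)\,\mathrm{Hess}_Bf(b)=0$ for every $b,y$. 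Choosing $b_0\in B$ and $X,Y$ with $\mathrm{Hess}_Bf(b_0)(X,Y)\neq 0$ then forces $\partial_{y^j}\phi\equiv 0$, so $\phi$ is constant and $h$ is a lift. The main obstacle is the degenerate subcase where $\mathrm{Hess}_Bf\equiv 0$ (so $f$ is affine and $\|\nabla f\|^2\equiv c^2$ is constant); there, the $(X,Y)$-block is silent about $\phi$, and one must instead substitute $h=H+f\phi$ into the $(V,W)$-equation and exploit $f(b_1)\neq f(b_2)$ (available since $f$ is nonconstant): the trace-free part then forces $\mathrm{Hess}_F\phi=(\Delta_F\phi/m)g_F$, while the trace part yields a Helmholtz-type identity $\Delta_F\phi+mc^2\phi=\mathrm{const}$, and combining these with further $b$-variation produces enough linear independence to force $\phi$ to be constant.
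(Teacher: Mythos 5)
Your Step~1 and the main case of Step~3 coincide in substance with the paper's argument (the paper works along unit geodesics of $B$ and obtains the scalar identity $\beta f^{2}f''\,U(\psi)=U(\rho R_{g_F}+f^{2}\lambda)$, which is exactly your tensorial computation evaluated on $(\gamma',\gamma')$). The genuine gap is in Step~2, where you route the forward direction of (a) and all of (b) through the fiber block and Schur's lemma. Schur requires $m\geqslant 3$, and your dismissal of the low dimensions as ``trivial or handled by the general framework'' fails at $m=2$ with $\alpha\neq 0$: every surface satisfies $\mathrm{Ric}_{g_F}=\tfrac{R_{g_F}}{2}g_F$ identically, so the fiber block only yields $\Lambda(b,y)=\tfrac{\alpha}{2}R_{g_F}(y)$ with $R_{g_F}$ possibly nonconstant, and then $\rho R_{g_F}+\lambda f^{2}=\Lambda(y)-\Psi(b)$ is not shown to be a lift from $B$. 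The fix is to read the conclusion off the \emph{horizontal} block instead, which is what the paper does: once $h$ is a lift, every term of $\alpha\bigl(\mathrm{Ric}_{g_B}-\tfrac{m}{f}\nabla^{2}f\bigr)+\beta\nabla^{2}h+\mu\,dh\otimes dh=(\rho R_{g}+\lambda)g_B$ on the left depends only on $b$, hence so does $\rho R_{g}+\lambda$, and therefore so does $\rho R_{g_F}+\lambda f^{2}=f^{2}(\rho R_{g}+\lambda)-\rho f^{2}R_{g_B}+2m\rho f\Delta f+m(m-1)\rho\|\nabla f\|^{2}$ --- for every $m$ and every $\alpha$, with no appeal to Schur. (Your fiber-block computation is not wasted: it is essentially how Theorem~1 later shows the fiber is Einstein when $\alpha\neq 0$, but it is the wrong tool for this proposition.)

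A second, softer issue is your degenerate subcase $\nabla^{2}f\equiv 0$ in Step~3. The sketch you give does not close: after eliminating the $b$-dependence you arrive at $\nabla^{2}_{F}\phi+c^{2}\phi\, g_F=k\,g_F$, an Obata-type equation that admits nonconstant solutions (e.g.\ first spherical harmonics on a round sphere), so ``further $b$-variation'' does not obviously force $\phi$ to be constant. The correct observation is that this subcase is vacuous: a gradient Einstein-type metric is by definition geodesically complete, hence $B$ is complete (Bishop--O'Neill), and a complete manifold admitting a nonconstant function with identically vanishing Hessian splits off a line along which $f$ is affine and therefore changes sign, contradicting $f>0$. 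This is also the implicit justification behind the paper's assertion that $f''(p)\neq 0$ at some point $p$, which its own proof relies on at the same juncture.
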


\begin{remark}
The converse statement of item~\eqref{1b} is not true. Indeed,
 by a straightforward computation, we can prove that $\reals\times_{\cosh(t)}\reals$ is a gradient Einstein-type warped product satisfying
  \begin{equation*}
    \riccitensor+\hessian{h}+\tensorproduct{dh}{dh}=\left(\frac{R_{g}}{2}+1\right)g
  \end{equation*}
with potential function
\[
    h(t,s)=\ln2+\ln\cosh(t)+\ln\cosh(s).
\] 
  Note that the potential function $h$ does not depend only on the base.
\end{remark}

\begin{proof}[Proof of Proposition~\ref{thm:wpgEtM-and-lfB}] 
\noindent{Item~\eqref{1a}}: Take $X\in\mathcal{L}(B)$ and $U\in\mathcal{L}(F)$ in~\eqref{fundamental} to get
\begin{equation*}
0=\hessian{h(X,U)}{}=X(U(h))-(\nabla_{X}U)h= X(U(h))-\frac{X(f)U(h)}{f}=f X(U(hf^{-1})).
\end{equation*}
This shows that $U(hf^{-1})$ depends only on the fiber, which implies $h=\varphi+f\psi$ where $\varphi\in\functionspace{B}$ and $\psi\in\functionspace{F}$. Now, take a unitary geodesic $\gamma$ on $B^{n}$, so that equation~\eqref{fundamental} reads along $\gamma$ as
\begin{align*}
\varphi^{\prime\prime}+f^{\prime\prime}\psi
& = \gamma^{\prime}\left(\gamma^{\prime}(h)\right)                                                            
= \hessian{h}\left(\gamma^{\prime},\gamma^{\prime}\right)                                                   \\
& =\frac{\rho}{\beta}\left(\scalarcurvature<g_{B}>+\frac{\scalarcurvature<g_{F}>}{f^{2}}-\frac{2m}{f}\driftedlaplacian{f}{}-m(m-1)\frac{\|\nabla f\|^{2}}{f^{2}}\right)+\frac{\lambda}{\beta}\\
& \quad -\frac{\alpha}{\beta}\left(\textup{Ric}_{g_B}(\gamma',\gamma')-\frac{m}{f}\hessian{f(\gamma',\gamma')}\right).
\end{align*}
Therefore $\beta f^{2}f^{\prime\prime}U(\psi)=U\left(\rho\scalarcurvature<g_{F}>+f^{2}\lambda\right)$ for every $U\in\mathcal{L}(F)$.
Since $f$ is nonconstant, there exists $p\in B^{n}$ such that $f^{\prime\prime}(p)\neq{0}$. Thus, $U(\psi)=0$ if and only if $U\left(\rho\scalarcurvature<g_{F}>+f^{2}\lambda\right)=0$, i.e., $h$ is the lift of a smooth function on $B^{n}$ to $B^{n}\times{F}^{m}$ if and only if $\rho\scalarcurvature<g_{F}>+f^{2}\lambda$ is the lift of a smooth function on $B^{n}$ to $B^{n}\times{F}^{m}$.
 
\vspace{0.2cm}
\noindent{Item~\eqref{1b}}: We can make the change of variable $v=e^{\frac{\mu}{\beta}h}$ so that equation~\eqref{fundamental} becomes
\begin{equation*}
\alpha \riccitensor+\frac{\beta^{2}}{\mu}\frac{\hessian{v}}{v}=(\rho R_{g}+\lambda)g.
\end{equation*}
Similarly to item~\eqref{1a}, we deduce that $v=\varphi+f\psi$ where $\varphi\in\functionspace{B}$ and $\psi\in\functionspace{F}$. Therefore, for every geodesic $\gamma$ in $B^{n}$ we get
\begin{align*}
\frac{\beta}{\mu}\frac{1}{v}\left(\varphi^{\prime\prime}+f^{\prime\prime}\psi \right)
& =  \frac{\beta}{\mu}\frac{1}{v}\hessian{v}\left(\gamma^{\prime},\gamma^{\prime}\right)\\
& =\frac{\rho}{\beta}\left( \scalarcurvature<g_{B}>+\frac{\scalarcurvature<g_{F}>}{f^{2}}-\frac{2m}{f}\driftedlaplacian{f}{}-m(m-1)\frac{\|\nabla f\|^{2}}{f^{2}}\right)+\frac{\lambda}{\beta}\\
& \quad -\frac{\alpha}{\beta}\left(Ric_{B}(\gamma',\gamma')-\frac{m}{f}\hessian{f(\gamma',\gamma')}\right).
\end{align*}
Since $h$ depends only on the base, we have $U(\psi)=0$ for every $U\in\mathcal{L}(F)$. Thus, $\rho\scalarcurvature<g_{F}>+f^{2}\lambda$ is the lift of a smooth function on $B^{n}$ to $B^{n}\times{F}^{m}$.
\end{proof}

Proposition~\ref{thm:wpgEtM-and-lfB} establishes the most appropriate context to work with gradient Einstein-type warped metric. It was crucial for considering the main parameters setting on Theorem~\ref{thm:gEtM-c}, which is the second step to proceed with our construction. Before proving this theorem, we will use it as a tool to show a way for constructing examples of gradient Einstein-type warped products.

Having completed our preliminary discussion, we are now ready to construct our examples. 
\subsection{Construction of gradient Einstein-type warped product} In this subsection, we construct examples of gradient Einstein-type warped metrics by reducing a PDE to an ODE using a method known as \textit{ansatz}. An important method for generating ansatz is based on the theory of Lie symmetry groups for PDE (see Olver's book \cite{olver2000applications} or Bluman and Kumei's book \cite{bluman1989symmetries}). For instance, using a rotational ansatz, Robert Byrant showed the existence of a complete rotationally symmetric steady gradient Ricci soliton on $\mathbb{R}^{n}$, $n\geqslant3$, which is unique up to homothety (see Chow et al.~\cite{chow2007ricci}).

We start by taking $(B^n,g_{B})\!=\!(\mathbb{R}^n,\Psi^{-2} g_{Euc})$, $n\geqslant2$,  and an Einstein manifold $(F^{m},g_{F})$ satisfying $\textup{Ric}_{g_F}=\theta g_{F}$. For an arbitrary choice of a nonzero vector $\bar{\alpha}=\left(\alpha_1, \ldots, \alpha_n\right)$ in $\mathbb{R}^{n}$, we define the function $\xi: \mathbb{R}^n \rightarrow \mathbb{R}$ by
$$
\xi\left(x_1, \ldots, x_n\right)=\alpha_1 x_1+\ldots+\alpha_n x_n.
$$
Next, we look for smooth functions of one variable $\Psi, f, h,\lambda:(a, b) \subseteq \mathbb{R} \rightarrow \mathbb{R}$ with $\Psi$ and $f$ being positive and such that 
\begin{equation*}\Psi=\Psi \circ \xi, \ \ f=f \circ \xi, 
 \ \ h=h \circ \xi, \ \ \lambda=\lambda \circ \xi\ : \ \xi^{-1}(a, b)\subseteq \mathbb{R}^n \rightarrow \mathbb{R}
\end{equation*}
satisfy items~\eqref{2a} and~\eqref{2b} of Theorem~\ref{thm:gEtM-c} on $\mathbb{R}^n\times F^{m}$ with warped metric
$$
g=\Psi(x)^{-2}g_{Euc}+f(x)^2 g_F.
$$

First, note that the Ricci tensor on the conformal metric $g_B=\Psi^{-2} g_{Euc}$ is given by:
\begin{equation}\label{ricci_base}
\begin{split}
(\riccitensor_{g_B})_{ij}&=\frac{1}{\Psi^2}\left\{(n-2) \Psi(\hessian\Psi)_{ij}+\left[\Psi\Delta\Psi-(n-1)\norm{\nabla\Psi}^2\right] \delta_{ij}\right\}\\
&=\frac{1}{\Psi^2}\left\{(n-2)\alpha_{i}\alpha_{j} \Psi\Psi^{\prime \prime}+\left[\Psi\|\bar{\alpha}\|^2\Psi^{\prime \prime} -(n-1)\|\bar{\alpha}\|^2\left(\Psi^{\prime}\right)^2\right] \delta_{ij}\right\}.
\end{split}
\end{equation}

Hence, we easily see that the scalar curvature on the conformal metric takes the following form
\begin{equation}\label{scalar_base}
\begin{aligned}
R_{g_B}=\sum_{k=1}^n \Psi^2\left(\riccitensor_{g_B}\right)_{k k}=\|\bar{\alpha}\|^2(n-1)\left[2 \Psi \Psi^{\prime \prime}-n\left(\Psi^{\prime}\right)^2\right].
\end{aligned}
\end{equation}
Now, in order to compute the Hessian $\hessian{h}$ relatively to $g_B$ we evoke the expression
$$
(\hessian{h})_{i j}=h_{x_i x_j}-\sum_{k=1}^n \Gamma_{i j}^k h_{x_k},
$$
where $h_{x_{k}}$ and $h_{x_{k}x_{k}}$ denotes the first and second order derivative of $h$, respectively. In this case, the Christoffel symbols $\Gamma_{i j}^k$ for distinct $i, j, k$ are given by
$$
\Gamma_{i j}^k=0,\quad \Gamma_{i j}^i=-\frac{\Psi_{x_j}}{\Psi},\quad \Gamma_{i i}^k= \frac{\Psi_{x_k}}{\Psi} \quad \mbox{and}\quad \Gamma_{i i}^i=-\frac{\Psi_{x_i}}{\Psi} .
$$
Therefore,
\begin{equation}\label{hes}
\begin{aligned}
(\hessian{h})_{i j} & =h_{x_i x_j}+\Psi^{-1}\left(\Psi_{ x_i} h_{x_j}+\Psi_{x_j} h_{x_i}\right)-\delta_{i j} \sum_k \Psi^{-1} \Psi_{x_k} h_{x_k} \\
& =\alpha_i \alpha_j h^{\prime \prime}+\left(2 \alpha_i \alpha_j-\delta_{i j} \|\bar{\alpha}\|^2\right)\Psi^{-1} \Psi^{\prime} h^{\prime}.
\end{aligned}
\end{equation}
Using \eqref{hes} we have the following expression for $\Delta f$ with respect to $g_B$
\begin{equation}\label{laplaciano}
   \Delta f= \sum_k \Psi^2 (\hessian{f})_{k k}=\|\bar{\alpha}\|^2 \Psi^2\left[f^{\prime \prime}-(n-2) \Psi^{-1} \Psi^{\prime} f^{\prime}\right].
\end{equation}
Now, the expression of $\langle\nabla f, \nabla h\rangle$ and $\|\nabla f\|^2$ on conformal metric $g_B$ are given by
\begin{equation}\label{finais}
\begin{aligned}\langle\nabla f, \nabla h\rangle&=\Psi^2 \sum_k f_{x_k} h_{ x_k}=\|\bar{\alpha}\|^2 \Psi^2 f^{\prime} h^{\prime},\\
\quad\|\nabla f\|^2&=\Psi^2 \sum_k  f_{x_k}^2=\|\bar{\alpha}\|^2 \Psi^2\left(f^{\prime}\right)^2.
\end{aligned}
\end{equation}
From equations \eqref{scalar_base}, \eqref{laplaciano} and \eqref{finais}, we have the following expression for the scalar curvature of $g$:
\begin{equation}\label{scalar_curvature}
\begin{split}
R_{g}&=R_{g_B}-2m\frac{\Delta f}{f}-m(m-1) \frac{\norm{\nabla f}^2}{f^2}\\
&=\|\bar{\alpha}\|^2\bigg{\{}(n-1)\left[2 \Psi \Psi^{\prime \prime}-n\left(\Psi^{\prime}\right)^2\right]-2m \Psi^2\left[\dfrac{f^{\prime \prime}}{f}-(n-2) \frac{\Psi^{\prime}}{\Psi} \dfrac{f^{\prime}}{f}\right]\\
&\quad-m(m-1) \Psi^2\left(\frac{f^{\prime}}{f}\right)^2\bigg{\}}.
\end{split}
\end{equation}

Note that equations~\eqref{ricci_base} and~\eqref{hes} split into two cases since $n\geqslant2$. So, combining \eqref{ricci_base}-\eqref{scalar_curvature} with Theorem~\ref{thm:gEtM-c}, we get the following result. 

\begin{proposition}\label{Proposition-invariant}
    With $\Psi=\Psi \circ \xi$, $f=f \circ \xi$,  $h=h \circ \xi$, $\lambda=\lambda \circ \xi$ and $\xi$ as above, the manifold $\xi^{-1}(a,b)\times F^{m}$ with warped metric
$$
g=\Psi(x)^{-2}g_{Euc}+f(x)^2 g_F
$$
is a gradient Einstein-type manifold if and only if $\Psi$, $f$, $h$ and $\lambda$ satisfy
\begin{equation}\label{Prop2-1}
 \alpha(n-2) \frac{\Psi^{\prime \prime}}{\Psi}-\alpha m\frac{f^{\prime \prime}}{f} -2 \alpha m\frac{f^{\prime}}{f} \frac{\Psi^{\prime}}{\Psi} +\beta h^{\prime\prime}+2\beta\frac{\Psi^{\prime}}{\Psi}h^{\prime}+\mu(h^{\prime})^{2}=0,
\end{equation}
\begin{equation}\label{Prop2-2}
    \begin{aligned}
\alpha \Psi \Psi^{\prime \prime}-&\alpha(n-1)\left(\Psi^{\prime}\right)^2+\alpha m \Psi \Psi^{\prime}\frac{f^{\prime}}{f}-\beta\Psi \Psi^{\prime} h^{\prime}\\
& =\rho\bigg{\{}(n-1)\left[2 \Psi \Psi^{\prime \prime}-n\left(\Psi^{\prime}\right)^2\right]-2m \Psi^2\left[\dfrac{f^{\prime \prime}}{f}-(n-2) \frac{\Psi^{\prime}}{\Psi} \dfrac{f^{\prime}}{f}\right]\\
&\quad-m(m-1) \Psi^2\left(\frac{f^{\prime}}{f}\right)^2\bigg{\}}+\dfrac{\lambda}{\|\bar{\alpha}\|^2}
\end{aligned}
\end{equation}
and
\begin{equation}\label{Prop2-3}
    \begin{aligned}
\frac{\alpha \theta}{\|\bar{\alpha}\|^{2} f^{2}}&+\Psi^2 \bigg{\{}-\alpha \left[\dfrac{f^{\prime \prime}}{f}-(n-2) \frac{\Psi^{\prime}}{\Psi}  \dfrac{f^{\prime}}{f}\right]-\alpha(m-1) \left(\dfrac{f^{\prime}}{f}\right)^2+\beta h^{\prime} \frac{f^{\prime}}{f} \bigg{\}}\\
& =\rho\bigg{\{}(n-1)\left[2 \Psi \Psi^{\prime \prime}-n\left(\Psi^{\prime}\right)^2\right]-2m \Psi^2\left[\dfrac{f^{\prime \prime}}{f}-(n-2) \frac{\Psi^{\prime}}{\Psi}\dfrac{f^{\prime}}{f}\right]\\
&\quad-m(m-1) \Psi^2\left(\frac{f^{\prime}}{f}\right)^2\bigg{\}}+\dfrac{\lambda}{\|\bar{\alpha}\|^2}.
\end{aligned}
\end{equation}
\end{proposition}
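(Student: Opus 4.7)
My plan is to reduce Proposition~\ref{Proposition-invariant} to a direct verification of the two conditions in Theorem~\ref{thm:gEtM-c} applied to the specific \emph{ansatz} $\Psi=\Psi\circ\xi$, $f=f\circ\xi$, $h=h\circ\xi$, $\lambda=\lambda\circ\xi$. Since the fiber is assumed Einstein with $\textup{Ric}_{g_F}=\theta g_F$, so that $R_{g_F}=m\theta$, the computation amounts to bookkeeping: substitute the formulas \eqref{ricci_base}, \eqref{hes} (for both $h$ and $f$), \eqref{laplaciano}, \eqref{finais}, and \eqref{scalar_curvature} into items (a) and (b) of Theorem~\ref{thm:gEtM-c}.

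The key observation for item (a) is that every tensor appearing in it has, at each point, the form $A\,\alpha_i\alpha_j+B\,\delta_{ij}$ for some scalars $A,B$ depending on $\xi$. Explicitly, from \eqref{ricci_base}--\eqref{hes} I read off the ``$\alpha_i\alpha_j$-part'' and the ``$\delta_{ij}$-part'' of each of $\textup{Ric}_{g_B}$, $\textup{Hess}_{g_B}h$, $dh\otimes dh$, $\textup{Hess}_{g_B}f$, and $g_B=\Psi^{-2}g_{Euc}$. Because $\bar\alpha\neq 0$ and $n\geqslant 2$, the rank-one matrix $\alpha_i\alpha_j$ and the identity $\delta_{ij}$ are linearly independent, so the tensor identity in item (a) of Theorem~\ref{thm:gEtM-c} is equivalent to the two scalar equations obtained by matching coefficients separately. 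Matching the $\alpha_i\alpha_j$-coefficients produces exactly \eqref{Prop2-1}, and matching the $\delta_{ij}$-coefficients, after multiplying through by $\Psi^2$ and substituting \eqref{scalar_curvature} for $R_g/\|\bar\alpha\|^2$, produces \eqref{Prop2-2}.

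For item (b) of Theorem~\ref{thm:gEtM-c}, I plug in $R_{g_F}=m\theta$ on the left, and on the right I use \eqref{scalar_base} for $R_{g_B}$, \eqref{laplaciano} for $\Delta f$, and \eqref{finais} for $\langle\nabla f,\nabla h\rangle$ and $\|\nabla f\|^2$. Dividing the resulting identity by $\|\bar\alpha\|^2 f^2$ and regrouping terms so that the pieces proportional to $\rho$ are assembled into the right-hand side of \eqref{Prop2-3} yields that equation. The converse direction is obtained by running the same substitutions in reverse: starting from \eqref{Prop2-1}--\eqref{Prop2-3}, one reconstructs item (a) via the linear independence of $\alpha_i\alpha_j$ and $\delta_{ij}$, and item (b) by multiplying \eqref{Prop2-3} by $\|\bar\alpha\|^2 f^2$ and regrouping.

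The main obstacle will not be conceptual but rather algebraic: keeping track of the coefficients $\alpha$, $\beta$, $\mu$, $\rho$, and in particular of the combinations $(\alpha-2m\rho)$ and $(\alpha-m\rho)$ that naturally appear when one separates the pure-$\rho$ pieces of item (b) from those encoding the base/fiber geometry. A secondary care is to invoke the factor $\|\bar\alpha\|^2$ correctly: since all derivatives $\partial_{x_i}$ generate factors of $\alpha_i$, every contraction produces a power of $\|\bar\alpha\|^2$, and clearing this factor consistently is what justifies the exact normalizations appearing in \eqref{Prop2-2} and \eqref{Prop2-3}.
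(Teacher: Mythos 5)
Your proposal is correct and follows essentially the same route as the paper: the paper likewise obtains Proposition~\ref{Proposition-invariant} by substituting the computed expressions \eqref{ricci_base}--\eqref{scalar_curvature} into items \eqref{2a} and \eqref{2b} of Theorem~\ref{thm:gEtM-c}, with the remark that \eqref{ricci_base} and \eqref{hes} ``split into two cases since $n\geqslant2$'' playing exactly the role of your linear-independence argument for $\alpha_i\alpha_j$ versus $\delta_{ij}$. Nothing essential is missing.
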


It is worth mentioning that the parameters in equations~\eqref{Prop2-1}, \eqref{Prop2-2} and \eqref{Prop2-3} are the functions $f$, $h$, $\Psi$ and $\lambda$, while the system consists of three equations. Therefore, we can fix one of these functions and solve the system to determine the other functions. For instance, to obtain a gradient Einstein-type standard Riemannian product it is enough to get $f$ to be a constant in these equations. Also, whenever the system above has a solution for a nonzero vector $\bar{\alpha}=\left(\alpha_1, \ldots, \alpha_n\right)\in\mathbb{R}^{n}$ with $\|\bar{\alpha}\|^{2}=a\in\reals$, then it has infinitely many examples since we can choose a different $\bar{\alpha}\in\mathbb{R}^{n}$ with $\|\bar{\alpha}\|^{2}=a\in\reals$. Each of these examples has one associated geometry.

As applications of Proposition~\ref{Proposition-invariant}, we now construct some examples of degenerate and nondegenerate gradient Einstein-type warped metrics. 

\begin{example}\label{EX1} Let $\reals^{n}$ be the Euclidean space with standard metric $g_{Euc}$ and $\xi=\sum_{i=1}^{n}\alpha_{i}x_{i}$ with $\bar{\alpha}=(\alpha_{1},\dots, \alpha_{n})\in\reals^{n}$ and $\|\bar{\alpha}\|^2=1$. If we take a complete Ricci flat manifold $(F^{m},g_{F})$, then the functions
\[\Psi(\xi)\equiv 1, \qquad f(\xi)=e^{\xi} \qquad h(\xi)=\sqrt{m}\xi \qquad\mbox{and}\qquad \lambda(\xi)\equiv-m^{2}-m\]
satisfy equations \eqref{Prop2-1}, \eqref{Prop2-2}, \eqref{Prop2-3} and define a gradient Einstein-type warped metric on $\reals^{n}\times F^{m}$ with warping function $f$, potential function $\tilde{h}$ and parameters 
\[\alpha=1, \qquad \beta=\sqrt{m}, \qquad \mu=1  \quad\mbox{and}\quad \rho=-1.\]
Note that this example is nondegenerate if $n\neq2$, and degenerate otherwise.
\end{example}
\begin{example}\label{EX2}Let us consider the hyperbolic space $\mathbb{H}^{n}$, namely, the open half space $\mathbb{R}_{+}^{n}=
\{
(x_{1},  \dots, x_{n})\in\mathbb{R}^{n}: x_{n}>0
\}$ with the metric $x_{n}^{-2}g_{Euc}$ and $(F^{m},g_{F})$ a complete Ricci flat manifold. Besides, consider $\alpha_{1}=\cdots=\alpha_{n-1}=0$ and $\alpha_{n}=1$. Then the functions
\begin{align*}
\Psi(x_{n})&=x_{n},\quad
f(x_{n})=\frac{1}{x_{n}}, \quad
h(x_{n})=\ln(x_{n}+1)-\ln(x_{n})\quad \hbox{and}\\
\lambda(x_{n})&=1-\alpha(n+m-1)-\dfrac{x_{n}}{x_{n}+1}+\rho(n+m)(n+m-1)
\end{align*}
satisfy equations \eqref{Prop2-1}, \eqref{Prop2-2}, \eqref{Prop2-3}. Hence, $\mathbb{H}^{n}\times_{f}F^{m}$ is a gradient Einstein-type warped product with warped metric 
\[g=g_{\mathbb{H}^{n}}+x_{n}^{-2}g_{F},\] and parameters 
\[\alpha\in\mathbb{R}-\{0\},\qquad \beta=1,\qquad \mu=1 \quad\mbox{and}\quad\rho\in\mathbb{R}.\]
Note that this example is nondegenerate if $\alpha\neq \frac{1}{n+m-2}$, and degenerate otherwise.
\end{example}

Since the base and the fiber in the previous examples are both complete, we conclude from Lemma 7.2 of Bishop and O'Neill \cite{bishop1969} that Examples \ref{EX1} and \ref{EX2} are complete.
\begin{example}\label{EX4}
Let $\mathbb{R}_{+}^{n}=\{(x_{1}, \dots, x_{n})\in\mathbb{R}^{n}: x_{n}>0\}$ be the half space with the metric $\coth(x_{n})^{2}g_{Euc}$ and $(F^{m},g_{F})$ a complete Ricci flat manifold. Besides, consider $\alpha_{1}=\cdots=\alpha_{n-1}=0$ and $\alpha_{n}=1$. Then, the functions
\begin{align*}
\Psi(x_{n})&=\tanh(x_{n}),\;\;
f(x_{n})=\coth(x_{n}), \;\;
h(x_{n})=\frac{2\alpha}{3\beta}(m+n-2)\ln[\cosh(x_{n})]\;\; \hbox{and}\\
\lambda(x_{n})&= -\dfrac{\alpha}{3 \cosh^{4}(x_{n})}\big{[}2(n+m-2)+(n+m+1)\cosh(2x_{n})\big{]}-\rho R_{g}(x_{n})
\end{align*}
satisfy equations \eqref{Prop2-1}, \eqref{Prop2-2}, \eqref{Prop2-3}. Thus, $\mathbb{R}_{+}^{n}\times_{f}F^{m}$ is a nondegenerate gradient Einstein-type warped metric with 
\begin{equation}\label{ggff}
g=\coth(x_{n})^{2}\big(g_{Euc}+g_{F}\big),
\end{equation}
and parameters 
\[\alpha\in\reals-\{0\},\qquad \beta\in\reals-\{0\},\qquad \mu=0 \quad\mbox{and}\quad \rho\in\reals.\] 
To show that $g$ is complete, we use the divergent curve criterion. Since the fiber $(F^{m},g_{F})$ is complete, it is enough to prove that $(\mathbb{R}_{+}^{n}, \coth^2(x_{n})g_{Euc})$ is complete. 

Suppose that $\gamma:[0,\infty)\rightarrow \mathbb{R}_{+}^{n}$, $\gamma(t)=(\gamma_{1}(t), \dots, \gamma_{n}(t))$ is a divergent curve. Then
\begin{eqnarray*}
\int^t_{0}\coth(\gamma_{n}(t))|\gamma^{\prime}(t)|dt &\geqslant& \int^t_{0}\coth(\gamma_{n}(t))|\gamma_{n}^{\prime}(t)|dt\\
&\geqslant&\bigg{|} \int^t_{0}\coth(\gamma_{n}(t))\gamma_{n}^{\prime}(t)dt\bigg{|}\\
&=& \Big{|}\ln(\sinh(\gamma_{n}(t)))-\ln(\sinh(\gamma_{n}(0)))\Big{|}.
\end{eqnarray*}
If either $\gamma_{n}(t)\rightarrow+\infty$ or $\gamma_{n}(t)\rightarrow0$ as $t\rightarrow+\infty$, we have
\begin{equation*}
\lim_{t\rightarrow+\infty}\int_{0}^t \coth(\gamma_{n}(t))|\gamma^{\prime}(t)|dt=+\infty.
\end{equation*}
On the other hand, if $\gamma_{n}(t)$ is not part of the previous cases, then for some $j$, we have $\gamma_{j}(t)\rightarrow+\infty$ as $t\rightarrow+\infty$. In this case, we get
\begin{eqnarray*}
\int^t_{0}\coth(\gamma_{n}(t))|\gamma^{\prime}(t)|dt &\geqslant&\int^t_{0}\sqrt{\gamma_{1}^{\prime}(t)^{2}+\cdots+\gamma_{n}^{\prime}(t)^{2}}dt\\
&\geqslant& \int^t_{0}|\gamma_{j}^{\prime}(t)|dt\\
&\geqslant& \big{|}\gamma_{j}(t)-\gamma_{j}(0)\big{|}.
\end{eqnarray*}
Hence,
\begin{equation*}
\lim_{t\rightarrow+\infty}\int_{0}^t \coth(\gamma_{n}(t))|\gamma^{\prime}(t)|dt=+\infty.
\end{equation*}
So, we obtain the required completeness of $(\mathbb{R}_{+}^{n}\times_f F^m, g)$, where $g$ is given by \eqref{ggff}. 
\end{example}

\begin{example}\label{EX5}Consider the Euclidean space $\mathbb{R}^{n}$ with the metric $\cosh(x_{n})^{2}g_{Euc}$ and $(F^{m},g_{F})$ a complete Ricci flat manifold. Besides, consider $\alpha_{1}=\cdots=\alpha_{n-1}=0$ and $\alpha_{n}=1$. Then the functions
\begin{align*}
\Psi(x_{n})&=\sech(x_{n}),\;\;
f(x_{n}) =\cosh(x_{n}), \;\;
h(x_{n})=\frac{\alpha}{\beta}(m+n-2)\ln[\cosh(x_{n})]\;\; \hbox{and}\\
\lambda(x_{n})&=\frac{\rho}{2}(m+n-1) \operatorname{sech}^4(x_{n})\Big{\{}(m+n-2)[\cosh (2 x_{n})-1]+4\Big{\}}-\alpha \operatorname{sech}^4(x_{n})
\end{align*}
satisfy equations \eqref{Prop2-1}, \eqref{Prop2-2}, \eqref{Prop2-3}. Therefore, $\mathbb{R}^{n}\times_{f}F^{m}$ is a degenerate gradient Einstein-type warped metric with 
\begin{equation*}
g=\cosh(x_{n})^{2}\big(g_{Euc}+g_{F}\big),
\end{equation*}
and parameters 
\[\alpha\in\reals-\{0\},\qquad \beta\in\reals-\{0\},\qquad \mu=\frac{\beta^{2}}{\alpha(m+n-2)} \quad\mbox{and}\quad\rho\in\reals.\] 

To show that $g$ is complete, it is sufficient to prove that $(\mathbb{R}^{n}, \cosh(x_{n})^{2}g_{Euc})$ is complete. Since $\cosh(t)\geqslant1$, $\forall t\in \mathbb{R}$, we can conclude that  
\begin{equation}\label{cauchy}d_{\cosh^{2}(x_{n})g_{Euc}}(p,q)\geqslant d_{g_{Euc}}(p,q),\qquad \forall\ p,q\in\mathbb{R}^{n},
\end{equation}
where $d_{\cosh^{2}(x_{n})g_{Euc}}$ and $d_{g_{Euc}}$ are the distance functions induced by $\cosh^{2}(x_{n})g_{Euc}$ and $g_{Euc}$, respectively. Equation \eqref{cauchy} implies that any Cauchy sequence on the metric space $(\mathbb{R}^{n}, d_{\cosh^{2}(x_{n})g_{Euc}})$ is a Cauchy sequence on the metric space $(\mathbb{R}^{n}, d_{Euc})$. Since $(\mathbb{R}^{n}, g_{Euc})$ is complete, we deduce from the equivalences of the Hopf-Rinow theorem that $(\mathbb{R}^{n}, \cosh(x_{n})^{2}g_{Euc})$ is complete.
\end{example}

\begin{remark}\label{Char-CE}
We recall the following characterization that justifies the terminology used in \cite{catino2016geometry}. A Riemannian manifold $(M, g)$ is conformally Einstein if and only if for some nonzero $\alpha, \beta$ and $\mu$, $(M, g)$ is a degenerate gradient Einstein-type manifold. Example~\ref{EX4} shows that $\mu\neq0$ condition is crucial for this characterization, while Example~\ref{EX5} aligns with it.
\end{remark}

Now, we present two examples of metrics that satisfy conditions \eqref{Prop2-1}, \eqref{Prop2-2}, \eqref{Prop2-3}, meeting all the conditions to be a gradient Einstein-type warped metric except for the completeness.

\begin{example}\label{Incomplete1} Let $\reals_{\ast}^{n}=\{(x_{1},\dots,x_{n})\in\mathbb{R}^{n}:\xi=\alpha_{1}x_{1}+\dots+\alpha_{n}x_{n}>0\}$ equipped with the Euclidean metric $g_{Euc}$ and $(F^{m}, g_{F})$ a complete Einstein manifold with $\textup{Ric}_{g_{F}}=(m-2) g_{F}$, $m>2$. Let us consider 
  \[\alpha=1, \qquad \beta=-1, \qquad \mu=1  \quad\mbox{and}\quad \rho=0.  \]
Then the functions
\[\Psi(\xi)\equiv 1, \qquad f(\xi)=\xi \qquad h(\xi)=-\ln(\xi) \qquad\mbox{and}\qquad \lambda(\xi)\equiv0\]
satisfy the conditions for constructing a gradient Einstein-type warped product, except for the completeness of the manifold. The metric $g_{Euc}$ on $\reals_{\ast}^{n}$ is not complete, since the divergent curve $\gamma:[0,\infty)\rightarrow(\frac{p_{1}}{1+t},\dots,\frac{p_{n}}{1+t})$ on $\mathbb{R}_{\ast}^{n}$ satisfies
\begin{eqnarray*}
\lim_{t\rightarrow+\infty}\int_1^t \|\gamma^{\prime}(t)\|dt=p_{1}^{2}+\dots+p_{n}^{2}<+\infty,
\end{eqnarray*}
where $(p_{1},\dots, p_{n})\in\mathbb{R}_{\ast}^{n}$. 
\end{example}

From Proposition~\ref{Proposition-invariant} we also recover the following example of~\cite[Corollary 1]{feitosa2019gradient}.

\begin{example}Consider $(\reals^{n},e^{2\xi}g_{Euc})$ and $(F^{m},g_{F})$ a complete Ricci flat manifold. Let 
\[\alpha=1, \qquad \beta=1, \qquad \mu=0  \quad\mbox{and}\quad \rho=0.  \]
Then the functions
\[\Psi(\xi)=e^{-\xi}, \quad f(\xi)=e^{\xi},\quad h(\xi)=-\dfrac{2-n-m}{2}\xi\quad\mbox{and}\quad \lambda(\xi)=\dfrac{2-n-m}{2}e^{-2\xi}\]
satisfy the conditions for constructing a gradient Einstein-type warped product, except for the completeness of the manifold. To show that the warped product $\mathbb{R}^{n}\times_{f}F^{m}$ is not complete, we can use a similar argument as in Example \ref{Incomplete1}.
\end{example}

\section{Proof of Theorem~\ref{thm:gEtM-c}}\label{Theorem1}

To prove Theorem~\ref{thm:gEtM-c} is crucial that we state the following two lemmas. Lemma~\ref{prop:einstein-type-manifold-property-beta-neq-0} provides an extension of equation (2.4) of Proposition 2.1 in \cite{eminenti2008ricci}. Lemma~\ref{Lemma2-Hamilton} is an application of Lemma~\ref{prop:einstein-type-manifold-property-beta-neq-0} for the particular case of gradient Einstein-type warped metrics. 

\begin{lemma}\label{prop:einstein-type-manifold-property-beta-neq-0}
Let $(M^{n},g)$ be a gradient Einstein-type manifold with $\beta\neq{0}$. Then
\begin{align*}
&d\big{[}\alpha^{2}\scalarcurvature+\beta^{2}\norm{\gradient{h}}^{2}-2(n-1)\alpha\big(\rho\scalarcurvature+\lambda\big)\big{]}\\
&= 2\mu\big(\alpha\laplacian{h}{}-\beta\norm{\gradient{h}}^{2}\big)dh-\alpha\mu{d}\norm{\gradient{h}}^{2}+2\beta(\rho\scalarcurvature+\lambda)dh.
\end{align*}
\end{lemma}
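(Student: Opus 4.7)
\medskip

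\noindent\textbf{Proof proposal for Lemma~\ref{prop:einstein-type-manifold-property-beta-neq-0}.}

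\medskip

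The plan is to combine three standard manipulations of the fundamental equation~\eqref{fundamental}: the trace, the divergence, and the interior product with $\gradient{h}$. First, I would take the metric trace of~\eqref{fundamental}, which yields the scalar identity
\begin{equation*}
\alpha\scalarcurvature+\beta\laplacian{h}+\mu\|\gradient{h}\|^{2}=n(\rho\scalarcurvature+\lambda),
\end{equation*}
so that, differentiating, $\beta\, d(\laplacian{h})=(n\rho-\alpha)d\scalarcurvature+n\, d\lambda-\mu\, d\|\gradient{h}\|^{2}$. This will let me eliminate the Hessian-trace derivative later.

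Next, I would take the divergence of~\eqref{fundamental}. The three key tensorial facts I need are the contracted second Bianchi identity $\divergent{\riccitensor}=\tfrac{1}{2}d\scalarcurvature$, the commutation formula
\begin{equation*}
\divergent{\nabla dh}=d(\laplacian{h})+\riccitensor(\gradient{h},\cdot),
\end{equation*}
which follows from the Ricci identity applied to $h_{ij}$, and the elementary computation
\begin{equation*}
\divergent{dh\otimes dh}=(\laplacian{h})\, dh+\tfrac{1}{2}d\|\gradient{h}\|^{2},
\end{equation*}
using that $h^{j}h_{ji}=\tfrac{1}{2}\nabla_{i}\|\gradient{h}\|^{2}$. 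Since $\divergent{(\rho\scalarcurvature+\lambda)g}=\rho\, d\scalarcurvature+d\lambda$, these combine to give one covector identity relating $d\scalarcurvature$, $d\lambda$, $d(\laplacian{h})$, $d\|\gradient{h}\|^{2}$, $(\laplacian{h})dh$ and $\riccitensor(\gradient{h},\cdot)$.

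Third, I would contract~\eqref{fundamental} with $\gradient{h}$. Using once more $\nabla dh(\gradient{h},\cdot)=\tfrac{1}{2}d\|\gradient{h}\|^{2}$, this produces
\begin{equation*}
\alpha\,\riccitensor(\gradient{h},\cdot)=(\rho\scalarcurvature+\lambda-\mu\|\gradient{h}\|^{2})dh-\tfrac{\beta}{2}d\|\gradient{h}\|^{2},
\end{equation*}
which gives me $\alpha\beta\,\riccitensor(\gradient{h},\cdot)$ explicitly in terms of the quantities that appear on the right-hand side of the desired identity.

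Finally, I would multiply the divergence identity by $\alpha$, substitute both the traced relation for $\beta\, d(\laplacian{h})$ and the contraction formula for $\alpha\beta\,\riccitensor(\gradient{h},\cdot)$, and collect terms. The coefficients of $d\scalarcurvature$ and $d\lambda$ reassemble as $d[\alpha^{2}\scalarcurvature-2(n-1)\alpha(\rho\scalarcurvature+\lambda)]$, the coefficient of $d\|\gradient{h}\|^{2}$ is exactly $\beta^{2}$ after accounting for the $-\tfrac{\beta^{2}}{2}$ from the contraction and the $+\tfrac{\mu}{2}$ from the divergence of $dh\otimes dh$, and the $dh$-terms regroup as $2\mu(\alpha\laplacian{h}-\beta\|\gradient{h}\|^{2})dh+2\beta(\rho\scalarcurvature+\lambda)dh$. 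I expect the main obstacle to be purely bookkeeping: keeping the signs and the factors of $2$ straight during the substitution, since four separate identities are being combined and the $\alpha\mu\, d\|\gradient{h}\|^{2}$ term on the right must survive with the correct sign rather than being absorbed into $\beta^{2}d\|\gradient{h}\|^{2}$.
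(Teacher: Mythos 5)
Your proposal is correct and follows essentially the same route as the paper: trace the fundamental equation, take its divergence using the contracted Bianchi identity together with $\divergent{\nabla dh}=d(\Delta h)+\riccitensor(\gradient{h})$ and $\divergent{dh\otimes dh}=(\Delta h)\,dh+\tfrac12 d\|\gradient{h}\|^{2}$, contract with $\gradient{h}$, and recombine. Carrying out your substitution does reproduce the stated identity (the $-\beta^{2}d\|\gradient{h}\|^{2}$ produced by $2\beta\,\riccitensor(\gradient{h})$ moves to the left-hand side, and the net $-\alpha\mu\,d\|\gradient{h}\|^{2}$ survives on the right), so the only thing missing is the explicit bookkeeping you already flag.
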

\begin{proof}Throughout this proof, we will utilize the following identities, which are well-known in the literature:
\begin{equation}\label{eqqq1}
    \divergent{\tensorproduct{df}{df}}=\laplacian{f}df+\hessian{f}(\gradient{f}),
\end{equation}
\begin{equation}\label{eqqq2}
\divergent{\hessian{f}}=\riccitensor(\gradient{f})+d\laplacian{f}
\end{equation}
and
\begin{equation}\label{eqqq3}
d\norm{\gradient{f}}^{2}=2\nabla^2 f(\nabla f, \cdot)
\end{equation}
as well as the following equations derived from the fundamental equation \eqref{fundamental}:
\begin{equation}\label{eqqq4}\alpha\riccitensor(\gradient{h})+\beta \hessian{h}(\gradient{h})+\mu\tensorproduct{dh}{dh}(\gradient{h},\cdot)=(\rho\scalarcurvature+\lambda)g(\gradient{h},\cdot),
\end{equation}
\begin{equation}\label{eqqq5}\alpha\scalarcurvature+\beta\laplacian{h}{}+\mu\norm{\gradient{h}}^{2}=n(\rho\scalarcurvature+\lambda)
\end{equation}
and 
\begin{equation}\label{eqqq6}\beta d\laplacian{h}{}=n d(\rho\scalarcurvature+\lambda)-\alpha d\scalarcurvature-\mu d\norm{\gradient{h}}^{2}.
\end{equation}
From the second contracted Bianchi identity and equations~\eqref{eqqq1},~\eqref{eqqq2},~\eqref{eqqq6}, we have
\begin{align*}
\alpha{d\scalarcurvature} &= 2\divergent{\alpha\riccitensor}\\
&= -2\beta\divergent{\hessian{h}}-2\mu\divergent{\tensorproduct{dh}{dh}}+2d(\rho\scalarcurvature+\lambda)\\
&=-2\beta\divergent{\hessian{h}}-2\mu\laplacian{h}{}dh-2\mu\hessian{h}(\gradient{h})+2d(\rho\scalarcurvature+\lambda)\\
&=-2\beta\riccitensor(\gradient{h})-2\beta d\laplacian{h}{}-2\mu\laplacian{h}{}dh-2\mu\hessian{h}(\gradient{h})+2d(\rho\scalarcurvature+\lambda)\\
&=-2\beta\riccitensor(\gradient{h})-2nd(\rho\scalarcurvature+\lambda)+2\alpha d\scalarcurvature+2\mu{d}\norm{\gradient{h}}^{2}-2\mu\laplacian{h}{}dh\\
&\quad -2\mu\hessian{h}(\gradient{h})+2d(\rho\scalarcurvature+\lambda).
\end{align*}
Therefore, from~\eqref{eqqq3} and \eqref{eqqq4}, we arrive at
\begin{align*}
\alpha{d\scalarcurvature} &= 2(n-1)d(\rho\scalarcurvature+\lambda)+2\beta \riccitensor(\gradient{h})-2\mu\hessian{h}(\gradient{h})+2\mu\laplacian{h}{}dh\\
&=\frac{2(\beta^{2}+\alpha\mu)}{\beta}\riccitensor(\gradient{h})+\frac{2 dh}{\beta}[(n-1)(\rho\scalarcurvature+\lambda)\mu-\alpha\mu \scalarcurvature]\\
&\quad+2(n-1)d(\rho\scalarcurvature+\lambda).
\end{align*}
Now, utilizing~\eqref{eqqq3}, \eqref{eqqq4} and \eqref{eqqq5}, we compute
\begin{align*}
&d\big{[}\alpha^{2}\scalarcurvature+\beta^{2}\norm{\gradient{h}}^{2}-2(n-1)\alpha(\rho\scalarcurvature+\lambda)\big{]}\\
&=\frac{2\alpha(\beta^{2}+\alpha\mu)}{\beta}\riccitensor(\gradient{h})+\frac{2\alpha dh}{\beta}[(n-1)(\rho\scalarcurvature+\lambda)\mu-\alpha\mu \scalarcurvature] \\
		 &\quad +2(n-1)\alpha d(\rho\scalarcurvature+\lambda)+2\beta^{2}\hessian{h}(\gradient{h})-2(n-1)\alpha d(\rho\scalarcurvature+\lambda)\\
&= 2\alpha\beta \riccitensor(\gradient{h})+\frac{2\alpha^{2}\mu}{\beta}\riccitensor(\gradient{h})+2\beta^{2}\hessian{h}(\gradient{h})               \\
		 &\quad+\frac{2\alpha\mu}{\beta}dh[(n-1)(\rho\scalarcurvature+\lambda)-\alpha \scalarcurvature]\\
          &=2\beta(\rho\scalarcurvature+\lambda)dh-2\beta\mu\norm{\gradient{h}}^{2}dh+\frac{2\alpha^{2}\mu}{\beta}\riccitensor(\gradient{h})          \\
          & \quad+\frac{2\alpha\mu}{\beta}dh[n(\rho\scalarcurvature+\lambda)-\alpha \scalarcurvature]-\frac{2\alpha\mu}{\beta}(\rho\scalarcurvature+\lambda) dh \\
          &=2\beta(\rho\scalarcurvature+\lambda)dh-2\beta\mu\norm{\gradient{h}}^{2}dh+\frac{2\alpha^{2}\mu}{\beta}\riccitensor(\gradient{h})           \\
          &\quad +\frac{2\alpha\mu}{\beta}dh(\beta\laplacian{h}{}+\mu\norm{\gradient{h}}^{2})-\frac{2\alpha\mu}{\beta}(\rho\scalarcurvature+\lambda) dh\\
          &=2\mu\big(\alpha\laplacian{h}{}-\beta\norm{\gradient{h}}^{2}\big)dh-\alpha\mu{d}\norm{\gradient{h}}^{2}+2\beta(\rho\scalarcurvature+\lambda)dh,
	\end{align*}
from which we complete the proof of the lemma.
\end{proof}

\begin{lemma}\label{Lemma2-Hamilton}
  Let $B^{n}\times_{f}F^{m}$ be a gradient Einstein-type manifold with
  $\beta\neq{0}$, potential function $\tilde{h}$ and soliton function
  $\tilde{\lambda}$. Then
\begin{align*}
&d\big{(}\alpha(2-n-m)(\rho\scalarcurvature+\lambda)-\alpha\beta\laplacian{h}{}-\frac{\alpha\beta m}{f}\gradient{h}(f)+\left(\beta^{2}-\alpha\mu\right)\norm{\gradient{h}}^{2}\big{)}\\
&=-\alpha\mu d\norm{\gradient{h}}^{2}+2\beta(\rho\scalarcurvature+\lambda) dh + 2\mu\left(\alpha\laplacian{h}{}+\frac{\alpha{m}}{f}\gradient{h}(f)-\beta\norm{\gradient{h}}^{2}\right)d h.
\end{align*}
\end{lemma}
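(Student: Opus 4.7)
The plan is to apply Lemma~\ref{prop:einstein-type-manifold-property-beta-neq-0} directly to the warped product $(M^{d},g) = B^{n}\times_{f}F^{m}$, viewed as a $d$-dimensional gradient Einstein-type manifold with $d = n+m$ and potential function $\tilde h$, and then to translate the resulting identity back to the base using the standard warped product formulas. Since $\tilde h$ is the lift of $h\in\functionspace{B}$, the gradient $\gradient{\tilde h}$ is horizontal, so that $d\tilde h = dh$ and $\norm{\gradient{\tilde h}}_{g}^{2} = \norm{\gradient{h}}_{g_{B}}^{2}$, while the Laplacian on the warped product picks up the standard correction
\[
\Delta_{g}\tilde h = \laplacian{h} + \frac{m}{f}\gradient{h}(f).
\]

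Substituting $d = n+m$ together with these identifications into the conclusion of Lemma~\ref{prop:einstein-type-manifold-property-beta-neq-0} produces
\begin{align*}
&d\bigl[\alpha^{2}\scalarcurvature + \beta^{2}\norm{\gradient{h}}^{2} - 2(n+m-1)\alpha(\rho\scalarcurvature + \lambda)\bigr]\\
&\quad = 2\mu\Bigl(\alpha\laplacian{h} + \frac{\alpha m}{f}\gradient{h}(f) - \beta\norm{\gradient{h}}^{2}\Bigr)dh - \alpha\mu\,d\norm{\gradient{h}}^{2} + 2\beta(\rho\scalarcurvature+\lambda)\,dh,
\end{align*}
which is already the right-hand side stated in Lemma~\ref{Lemma2-Hamilton}. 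What remains is to reshape the bracket on the left-hand side.

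For that last step, I would trace the fundamental equation~\eqref{fundamental} on $M^{d}$, using once more the warped product formula for $\Delta_{g}\tilde h$, to obtain
\[
\alpha\scalarcurvature + \beta\Bigl(\laplacian{h} + \frac{m}{f}\gradient{h}(f)\Bigr) + \mu\norm{\gradient{h}}^{2} = (n+m)(\rho\scalarcurvature+\lambda).
\]
Solving this for $\alpha\scalarcurvature$ and inserting the result inside the bracket above collapses the coefficient of $(\rho\scalarcurvature+\lambda)$ to $\alpha[(n+m)-2(n+m-1)] = \alpha(2-n-m)$, while the remaining contributions reassemble exactly as $-\alpha\beta\laplacian{h} - (\alpha\beta m/f)\gradient{h}(f) + (\beta^{2}-\alpha\mu)\norm{\gradient{h}}^{2}$. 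Taking the exterior derivative of this expression reproduces the stated left-hand side.

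The argument is essentially mechanical once Lemma~\ref{prop:einstein-type-manifold-property-beta-neq-0} is in hand, so the main obstacle is purely bookkeeping: one must be careful to interpret $\laplacian{h}$ and $\norm{\gradient{h}}^{2}$ as quantities computed on $(B^{n},g_{B})$ rather than on $(M^{d},g)$, and to carry the warped product correction $(m/f)\gradient{h}(f)$ along on both sides whenever $\Delta_{g}\tilde h$ is replaced. It is precisely this asymmetric treatment of the ambient Laplacian that forces the $(\alpha\beta m/f)\gradient{h}(f)$ and $(\alpha m/f)\gradient{h}(f)$ terms to appear, on opposite sides, in the final identity.
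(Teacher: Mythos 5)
Your proposal is correct and follows essentially the same route as the paper: apply Lemma~\ref{prop:einstein-type-manifold-property-beta-neq-0} to the $(n+m)$-dimensional warped product with potential $\tilde h$, use the trace of \eqref{fundamental} to rewrite $\alpha^{2}\scalarcurvature$ inside the bracket, and then substitute $\laplacian{\tilde h}=\laplacian{h}+\frac{m}{f}\gradient{h}(f)$ and $\gradient{\tilde h}=\gradient{h}$. The only (immaterial) difference is the order of the last two substitutions.
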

\begin{proof}
Combining $\alpha \scalarcurvature=n(\rho\scalarcurvature+\tilde{\lambda})-\beta\laplacian{\tilde{h}}{}-\mu\norm{\gradient{\tilde{h}}}^{2}$ with Lemma~\ref{prop:einstein-type-manifold-property-beta-neq-0}, we obtain
\begin{align*}
&d\big{(}\alpha(2-n-m)(\rho\scalarcurvature+\tilde{\lambda})-\alpha\beta\laplacian{\tilde{h}}{}+\left(\beta^{2}-\alpha\mu\right)\norm{\gradient{\tilde{h}}}^{2}\big{)}\\
&=2\mu\left(\alpha\laplacian{\tilde{h}}{}-\beta\norm{\gradient{\tilde{h}}}^{2}\right)d\tilde{h}-\alpha\mu d\norm{\gradient{\tilde{h}}}^{2}+2\beta(\rho\scalarcurvature+\tilde{\lambda}) d\tilde{h}.
\end{align*}
Since $h$ only depends on $B^n$, one has
\[\gradient{\tilde{h}}= \gradient{h},\qquad\laplacian{\tilde{h}}{}=\laplacian{h}{}+\frac{m}{f}\gradient{h}(f).\]
The result follows by substituting these identities in the previous equation. 
\end{proof}

Now, we are in position to prove Theorem~\ref{thm:gEtM-c}.

\begin{proof}[Proof of Theorem~\ref{thm:gEtM-c}] We will proof the case $\alpha\neq0$, since the case $\alpha=0$ is analogous.
Let $(B^{n}\times_{f}F^{m},g)$ be a warped product manifold with $g=g_B+f^2g_F$. The warped metric $g$ on $B^{n}\times F^{m}$ defines a gradient Einstein-type metric with potential function $\tilde{h}$ and parameters $\alpha$, $\beta$, $\mu$, $\rho$ and $\tilde{\lambda}$ if and only if the following tensorial equation is satisfied
\begin{equation}\label{thm-1:eq1}
	\alpha \textup{Ric}+\beta\nabla d\tilde{h}+\mu \tensorproduct{d\tilde{h}}{d\tilde{h}}=(\rho\scalarcurvature+\tilde{\lambda})g
\end{equation}
for any vector fields $X_{1}$, $Y_{1}\in \mathcal{L}(B)$ and $X_{2}$, $Y_{2}\in \mathcal{L}(F)$. 

For $X_{1}\in \mathcal{L}(B)$ and $Y_{1}\in \mathcal{L}(B)$, we use Lemma~34 and Corollary 43 of~\cite{o1983semi} to deduce that
\begin{equation*}
\riccitensor\left(X_1, Y_1\right)=\riccitensor_{g_B}\left(X_1, Y_1\right)-\frac{m}{f} \hessian{f} \left(X_1, Y_1\right), \quad \nabla d\tilde{h} (X_{1},Y_{1})=\widetilde{\nabla dh(X_{1},Y_{1})}.
\end{equation*}
Hence, in this case, equation~\eqref{thm-1:eq1} is equivalent to 
\begin{equation}\label{thm-1:eq2}
\alpha \riccitensor<g_{B}>+\beta \hessian{h}+\mu \tensorproduct{dh}{dh}=\frac{\alpha{m}}{f}\hessian{f}+\left(\rho\scalarcurvature+\lambda\right)g_{B},
\end{equation}
which proves Item~\eqref{2a}.
 
For $X_{1}\in \mathcal{L}(B)$ and $Y_{2}\in \mathcal{L}(F)$, we use again Lemma~34 and Corollary 43 of~\cite{o1983semi} to deduce that
\begin{equation*}
\riccitensor\left(X_1, Y_2\right)=0,\quad \nabla d\tilde{h}(X_{1},Y_{2})=0,\quad d\tilde{h}(Y_{2})=0.
\end{equation*}
So, equation~\eqref{thm-1:eq1} is trivially verified. Now, since 
$$\hessian{\tilde{h}(X_{2},Y_{2})}=f\gradient{h}(f)g_{F}{}(X_{2},Y_{2}),\qquad X_{2},Y_{2}\in\mathcal{L}(F),$$
we deduce from Corollary 43 of~\cite{o1983semi} and~\eqref{thm-1:eq1} that $(F^{m},g_{F})$ satisfies
$\riccitensor<g_{F}>=\theta{g_{F}}$ with
\begin{equation}\label{thm-1:eq3}
\theta=\frac{R_{g_{F}}}{m}=\alpha^{-1}\left(\rho\scalarcurvature+\lambda\right)f^{2}+f\laplacian{f}+(m-1)\norm{\gradient{f}}^{2}-\alpha^{-1}\beta f\gradient{h}(f).
\end{equation}
To conclude the proof, we will prove that $\theta$ is constant. By tracing~\eqref{thm-1:eq2} and calculating its derivative, we obtain
\begin{equation}\label{thm-1:eq4}\alpha d\scalarcurvature<g_{B}>=n d(\rho\scalarcurvature+\lambda)-\alpha{m}\frac{df}{f^{2}}\laplacian{f}+\alpha{m}\frac{d\laplacian{f}}{f}-\beta d\laplacian{h}{}-\mu d\norm{\gradient{h}}^{2}.
\end{equation}
From \eqref{thm-1:eq2} it yields
\begin{equation*}
\begin{split}
\alpha \divergent{\riccitensor<g_{B}>}=& d(\rho\scalarcurvature+\lambda)+\alpha{m} \divergent{\frac{\hessian{f}}{f}}-\beta \divergent{\hessian{h}}\\
& -\mu \divergent{\tensorproduct{dh}{dh}}\\
=& d(\rho\scalarcurvature+\lambda)+\alpha{m}\left[\frac{\divergent{\hessian{f}}}{f}-\frac{\hessian{f}(\gradient{f})}{f^{2}}\right]-\beta \divergent{\hessian{h}}\\
& -\mu \divergent{\tensorproduct{dh}{dh}}\\
=& d(\rho\scalarcurvature+\lambda)+\frac{\alpha{m}}{f}(\riccitensor(\gradient{f})+d\laplacian{f})-\frac{\alpha{m}}{2f^{2}}d\norm{\gradient{f}}^{2}-\beta \riccitensor(\gradient{h})\\
&-\beta d\laplacian{h}{}-\mu \divergent{\tensorproduct{dh}{dh}}\\
=& d(\rho\scalarcurvature+\lambda)+\frac{\alpha{m}}{f}(\riccitensor(\gradient{f})+d\laplacian{f})-\frac{\alpha{m}}{2f^{2}}d\norm{\gradient{f}}^{2}-\beta \riccitensor(\gradient{h})\\
&-\beta d\laplacian{h}{}-\mu\laplacian{h}{} dh-\frac{\mu}{2}d\norm{\gradient{h}}^{2}.
\end{split}
\end{equation*}
Replacing the expressions
\begin{align*}
\alpha \riccitensor(\gradient{f}) & =(\rho\scalarcurvature+\lambda) df+\frac{\alpha{m}}{2f}d\norm{\gradient{f}}^{2}-\beta\hessian{h}(\gradient f)-\mu\gradient{h}(f)dh,\\
\beta \riccitensor(\gradient{h})  & =\frac{\beta(\rho\scalarcurvature+\lambda)}{\alpha} dh+\frac{\beta m}{f}\hessian{f}(\gradient{h})-\frac{\beta^{2}}{2\alpha}d\norm{\gradient{h}}^{2}-\frac{\mu\beta}{\alpha}\norm{\gradient{h}}^{2}dh,
\end{align*}
in the previous relation, we immediately deduce 
\begin{equation}\label{thm-1:eq5}
\begin{split}
\alpha \divergent{\riccitensor<g_{B}>} & =d(\rho\scalarcurvature+\lambda)+\frac{m}{f}(\rho\scalarcurvature+\lambda) df+\frac{\alpha{m}^{2}}{2f^{2}}d\norm{\gradient{f}}^{2}-\frac{m\beta}{f}\hessian{h}(\nabla f)\\
&\quad-\frac{m\mu}{f}\gradient{h}(f)dh+\frac{\alpha{m}}{f}d\laplacian{f}-\frac{\alpha{m}}{2f^{2}}d\norm{\gradient{f}}^{2}-\frac{\beta(\rho\scalarcurvature+\lambda)}{\alpha} dh\\
&\quad-\frac{ m\beta}{f}\hessian{f(\gradient{h},\cdot)}{}+\frac{\beta^{2}}{2\alpha}d\norm{\gradient{h}}^{2}+\frac{\mu\beta}{\alpha}\norm{\gradient{h}}^{2}dh-\beta d\laplacian{h}{}\\
&\quad-\mu\laplacian{h}{} dh-\mu \hessian{h}(\gradient{h})\\
& =d(\rho\scalarcurvature+\lambda)+\frac{m}{f}(\rho\scalarcurvature+\lambda) df+\frac{\alpha{m}^{2}}{2f^{2}}d\norm{\gradient{f}}^{2}-\frac{m\beta}{f}d(\gradient{h}(f))\\
&\quad -\frac{m\mu}{f}\gradient{h}(f)dh+\frac{\alpha{m}}{f}d\laplacian{f} -\frac{\alpha{m}}{2f^{2}}d\norm{\gradient{f}}^{2}-\frac{\beta(\rho\scalarcurvature+\lambda)}{\alpha}dh\\
&\quad+\frac{\beta^{2}}{2\alpha}d\norm{\gradient{h}}^{2}+\frac{\mu\beta}{\alpha}\norm{\gradient{h}}^{2}dh-\beta d\laplacian{h}{}-\mu\laplacian{h}{} dh-\frac{\mu}{2}d\norm{\gradient{h}}^{2} \\
&=d(\rho\scalarcurvature+\lambda)+\frac{m}{f}(\rho\scalarcurvature+\lambda) df+\frac{\alpha{m}(m-1)}{2f^{2}}d\norm{\gradient{f}}^{2}\\
&\quad -\frac{m\beta}{f}d(\gradient{h}(f))-\frac{m\mu}{f}\gradient{h}(f)dh+\frac{\alpha{m}}{f}d\laplacian{f}-\frac{\beta(\rho\scalarcurvature+\lambda)}{\alpha} dh\\
&\quad+\left(\frac{\beta^{2}}{2\alpha}-\frac{\mu}{2}\right)d\norm{\gradient{h}}^{2}+\frac{\mu\beta}{\alpha}\norm{\gradient{h}}^{2}dh-\beta d\laplacian{h}{}-\mu\laplacian{h}{} dh,
\end{split}
\end{equation}
where we have used the following identity
\[d(\gradient{h}(f))=\hessian{h}(\gradient f)+\hessian{f}(\gradient{h}).\]
Now, substituting~\eqref{thm-1:eq4} and \eqref{thm-1:eq5} in the twice-contracted Bianchi identity, we have
\begin{equation}\label{thm-1:eq6}
\begin{split}
0&=-\frac{\alpha d \scalarcurvature<g_{B}>}{2}+\alpha \divergent{\riccitensor<g_{B}>}\\
&=-\frac{n}{2} d(\rho\scalarcurvature+\lambda)+\frac{\alpha{m}}{2}\frac{df}{f^{2}}\laplacian{f}-\frac{\alpha{m}}{2}\frac{d\laplacian{f}}{f}+\frac{\beta}{2} d\laplacian{h}{}+\frac{\mu}{2} d\norm{\gradient{h}}^{2}\\
&\quad+d(\rho\scalarcurvature+\lambda)+\frac{m}{f}(\rho\scalarcurvature+\lambda) df+\frac{\alpha{m}(m-1)}{2f^{2}}d\norm{\gradient{f}}^{2}\\
&\quad -\frac{m\beta}{f}d(\gradient{h}(f))-\frac{m\mu}{f}\gradient{h}(f)dh+\frac{\alpha{m}}{f}d\laplacian{f}-\frac{\beta(\rho\scalarcurvature+\lambda)}{\alpha} dh\\
&\quad+\left(\frac{\beta^{2}}{2\alpha}-\frac{\mu}{2}\right)d\norm{\gradient{h}}^{2}+\frac{\mu\beta}{\alpha}\norm{\gradient{h}}^{2}dh-\beta d\laplacian{h}{}
-\mu\laplacian{h}{} dh.
\end{split}
\end{equation}
On the other hand, from Lemma~\ref{Lemma2-Hamilton} we can write
\begin{align*}
\frac{\beta(\rho\scalarcurvature+\lambda)}{\alpha} d h&=\frac{(2-n-m)}{2}d(\rho\scalarcurvature+\lambda)-\frac{\beta}{2} d\laplacian{h}{}-\frac{\beta m}{2} d\left(\frac{\gradient{h}(f)}{f}\right)+\frac{\mu}{2} d\norm{\gradient{h}}^{2}\\
&\quad+\frac{1}{2}\left(\frac{\beta^{2}}{\alpha}-\mu\right)d\norm{\gradient{h}}^{2}-\mu\big{(}\laplacian{h}{}+\frac{m}{f}\gradient{h}(f)-\frac{\beta}{\alpha}\norm{\gradient{h}}^{2}\big{)}d h.
\end{align*}
So, substituting the above expression in~\eqref{thm-1:eq6}, we obtain
\begin{equation*}
\begin{split}
0&=\frac{m}{2}d(\rho\scalarcurvature+\lambda)+\frac{\alpha{m}}{2}\frac{\laplacian{f}}{f^{2}}df+\frac{\alpha{m}}{2}\frac{d\laplacian{f}}{f}+m(\rho\scalarcurvature+\lambda)\frac{df}{f}+\frac{\alpha{m}(m-1)}{2f^{2}}d\norm{\gradient{f}}^{2}\\
&\quad-\frac{m\beta}{2f}d(\gradient{h}(f))-\frac{m\beta}{2f^{2}}\gradient{h}(f)df.
\end{split}
\end{equation*}
Multiplying above equation by $\frac{2f^{2}}{m}$, we get
\begin{equation*}
\begin{split}
\alpha d\theta=&f^{2}d(\rho\scalarcurvature+\lambda)+\alpha\laplacian{f} df+\alpha f d\laplacian{f}+2(\rho\scalarcurvature+\lambda) f df+\alpha(m-1)d\norm{\gradient{f}}^{2}\\
&-\beta f d(\gradient{h}(f))-\beta\gradient{h}(f)df=0,
\end{split}
\end{equation*}
which proves that $\theta$ is constant. To conclude Item~\eqref{2b}, we only need to replace the scalar curvature
\begin{equation*}
R_{g}=R_{g_B}+\frac{R_{g_F}}{f^2}-2m\frac{\Delta f}{f}-m(m-1) \frac{\norm{\nabla f}^2}{f^2}.
\end{equation*}
into equation \eqref{thm-1:eq3}.
\end{proof}

\section{Proof of the gradient estimate and its applications}\label{sec:gradient-estimates}

In this session, we give the proof of the gradient estimates mentioned in our introduction. As applications, we prove Corollary~\ref{Cor1-triviality} and discuss some estimates concerning the nonexistence of gradient Einstein-type warped metrics.

\begin{proof}[Proof of Theorem~\ref{Th2}]Let $p\in (0,\infty)$ and consider the change $h=\ln u^{p}=p\ln u$ in \eqref{PDE-Th2}. Then we have
\begin{equation}\label{thm-2:eq1}
    \driftedlaplacian{}{\varphi}{h}+\dfrac{1}{p}\norm{\gradient{h}}^{2}+p\mathcal{A}+p\mathcal{B}e^{\frac{h}{p}(\varepsilon-1)}=0.
\end{equation}
Suppose that there exists a constant $q$ such that $q-h\geqslant\delta>0$ for some $\delta$. Then, for all $x\in B(x_{0},R)$ the function 
\begin{equation}\label{thm-2:eq2}
    G:=\norm{\gradient{\ln (q-h)}}^{2}=\dfrac{\norm{\gradient{h}}^{2}}{(q-h)^{2}}
\end{equation}
satisfies
\begin{equation}\label{thm-2:eq3}
    \begin{split}
\dfrac{1}{2}\driftedlaplacian{}{\varphi}{G}&\geqslant\dfrac{(p-q+h)}{p(q-h)}\langle \gradient{G},\gradient{h}\rangle-(n-1)\textup{K}G+(q-h)G^{2}\\
&\quad -\dfrac{p}{(q-h)^{2}}\langle\gradient{h},\gradient{\mathcal{A}}\rangle-\dfrac{p\mathcal{A}}{q-h}G-\left(\varepsilon-1+\dfrac{p}{q-h}\right)\mathcal{B}e^{\frac{h}{p}(\varepsilon-1)}G.
    \end{split}
\end{equation}
Indeed, working in a local orthonormal system and using the Einstein summation convention, we compute 
\begin{equation*}
    \begin{split}
        G_{j}&=\dfrac{2h_{i}h_{ij}}{(q-h)^{2}}+ \dfrac{2h_{i}^{2}h_{j}}{(q-h)^{3}}\\
         \laplacian{G}&=\dfrac{2|h_{ij}|^{2}}{(q-h)^{2}}+\dfrac{2h_{i}h_{ijj}}{(q-h)^{2}}+\dfrac{8h_{i}h_{j}h_{ij}}{(q-h)^{3}}+\dfrac{2h_{i}^{2}h_{jj}}{(q-h)^{3}}+\dfrac{6h_{i}^{2}h_{j}^{2}}{(q-h)^{4}}.
    \end{split}
\end{equation*}
Hence,
\begin{equation*}
    \begin{split}
       \driftedlaplacian{}{\varphi}{G}&=\laplacian G-\langle\gradient{G},\gradient{\varphi}\rangle\\
       &=\dfrac{2|h_{ij}|^{2}}{(q-h)^{2}}+\dfrac{2h_{i}h_{ijj}}{(q-h)^{2}}+\dfrac{8h_{i}h_{j}h_{ij}}{(q-h)^{3}}+\dfrac{2h_{i}^{2}h_{jj}}{(q-h)^{3}}+\dfrac{6h_{i}^{2}h_{j}^{2}}{(q-h)^{4}}\\
       &\quad-\dfrac{2h_{i}\varphi_{j}h_{ij}}{(q-h)^{2}}- \dfrac{2h_{i}^{2}h_{j}\varphi_{j}}{(q-h)^{3}}.
    \end{split}
\end{equation*}
Substituting the Ricci identity $h_{ijj}=h_{jji}+\textup{Ric}_{ij}h_{j}$ into the previous equation, we obtain
\begin{equation*}
    \begin{split}
       \driftedlaplacian{}{\varphi}{G}&=\dfrac{2|h_{ij}|^{2}}{(q-h)^{2}}+\dfrac{2h_{i}(\driftedlaplacian{}{\varphi}{h})_{i}}{(q-h)^{2}}+\dfrac{2(\textup{Ric}_{ij}+\varphi_{ij})h_{i}h_{j}}{(q-h)^{2}}+\dfrac{8h_{i}h_{j}h_{ij}}{(q-h)^{3}}\\
       &\quad+\dfrac{2h_{i}^{2}(\driftedlaplacian{}{\varphi}{h})}{(q-h)^{3}}+\dfrac{6h_{i}^{2}h_{j}^{2}}{(q-h)^{4}}.
    \end{split}
\end{equation*}
Hence, from \eqref{thm-2:eq1}, we get
\begin{equation*}
    \begin{split}
       \driftedlaplacian{}{\varphi}{G}&=\dfrac{2|h_{ij}|^{2}}{(q-h)^{2}}+\dfrac{2h_{i}}{(q-h)^{2}}\left(-\frac{2}{p}h_{j}h_{ji}-p\mathcal{A}_{i}-\mathcal{B}e^{\frac{h}{p}(\varepsilon-1)}(\varepsilon-1)h_{i}\right)\\
       &\quad+\dfrac{2(\textup{Ric}_{ij}+\varphi_{ij})h_{i}h_{j}}{(q-h)^{2}}+\dfrac{8h_{i}h_{j}h_{ij}}{(q-h)^{3}}+\dfrac{6h_{i}^{2}h_{j}^{2}}{(q-h)^{4}}\\
       &\quad +\dfrac{2h_{i}^{2}}{(q-h)^{3}}\left(-\dfrac{1}{p}h_{i}^{2}-p\mathcal{A}-p\mathcal{B}e^{\frac{h}{p}(\varepsilon-1)}\right)\\
       &=\dfrac{2|h_{ij}|^{2}}{(q-h)^{2}}+\dfrac{2(\textup{Ric}_{ij}+\varphi_{ij})h_{i}h_{j}}{(q-h)^{2}}+\dfrac{8h_{i}h_{j}h_{ij}}{(q-h)^{3}}+\dfrac{6h_{i}^{2}h_{j}^{2}}{(q-h)^{4}}\\
       &\quad-\dfrac{4}{p}\dfrac{h_{i}h_{j}h_{ji}}{(q-h)^{2}}
       -\dfrac{2ph_{i}\mathcal{A}_{i}}{(q-h)^{2}}-\dfrac{2(\varepsilon-1)\mathcal{B}}{(q-h)^{2}}e^{\frac{h}{p}(\varepsilon-1)}h_{i}^{2}-\dfrac{2}{p}\dfrac{h_{i}^{4}}{(q-h)^{3}}\\
       &\quad-\dfrac{2p\mathcal{A}h_{i}^{2}}{(q-h)^{3}}-\dfrac{2p\mathcal{B}e^{\frac{h}{p}(\varepsilon-1)}h_{i}^{2}}{(q-h)^{3}}.
    \end{split}
\end{equation*}
Since $q-h\geqslant\delta>0$, we also have 
\begin{equation*}
    \dfrac{2|h_{ij}|^{2}}{(q-h)^{2}}+\dfrac{4h_{i}h_{j}h_{ij}}{(q-h)^{3}}+\dfrac{2h_{i}^{4}}{(q-h)^{4}}\geqslant0.
\end{equation*}
Then,
\begin{equation*}
    \begin{split}
       \driftedlaplacian{}{\varphi}{G}&\geqslant
      \dfrac{4h_{i}h_{j}h_{ij}}{(q-h)^{3}}+\dfrac{4h_{i}^{4}}{(q-h)^{4}}+\dfrac{2(\textup{Ric}_{ij}+\varphi_{ij})h_{i}h_{j}}{(q-h)^{2}}-\dfrac{4}{p}\dfrac{h_{i}h_{j}h_{ji}}{(q-h)^{2}}
       -\dfrac{2ph_{i}\mathcal{A}_{i}}{(q-h)^{2}}\\
       &\quad-\dfrac{2(\varepsilon-1)\mathcal{B}}{(q-h)^{2}}e^{\frac{h}{p}(\varepsilon-1)}h_{i}^{2}-\dfrac{2}{p}\dfrac{h_{i}^{4}}{(q-h)^{3}}-\dfrac{2p\mathcal{A}h_{i}^{2}}{(q-h)^{3}}-\dfrac{2p\mathcal{B}e^{\frac{h}{p}(\varepsilon-1)}h_{i}^{2}}{(q-h)^{3}}.
    \end{split}
\end{equation*}
The Ricci curvature condition $\textup{Ric}^{\varphi}\geqslant-(n-1)\textup{K}$ implies
\begin{equation*}
    (\textup{Ric}_{ij}+\varphi_{ij})h_{i}h_{j}\geqslant-(n-1)\textup{K}h_{i}^{2}.
\end{equation*}
On the other hand, from \eqref{thm-2:eq2}, we know that
\begin{equation*} \langle\gradient{h},\gradient{G}\rangle=h_{j}G_{j}=\dfrac{2h_{i}h_{j}h_{ij}}{(q-h)^{2}}+ \dfrac{2h_{i}^{4}}{(q-h)^{3}}.
\end{equation*}
Therefore,
\begin{equation*}
    \begin{split}
       \driftedlaplacian{}{\varphi}{G}&\geqslant
      \dfrac{2}{(q-h)}\langle\gradient{h},\gradient{G}\rangle-2(n-1)\textup{K}G-\dfrac{2}{p}\langle\gradient{h},\gradient{G}\rangle+2(q-h)G^{2}\\
       &\quad-\dfrac{2p}{(q-h)^{2}}\langle\gradient{h},\gradient{\mathcal{A}}\rangle-\dfrac{2p\mathcal{A}}{(q-h)}G-2\left(\varepsilon-1+\dfrac{p}{q-h}\right)\mathcal{B}e^{\frac{h}{p}(\varepsilon-1)}G\\
       &\geqslant\dfrac{2(p-q+h)}{p(q-h)}\langle\gradient{h},\gradient{G}\rangle-2(n-1)\textup{K}G+2(q-h)G^{2}\\
       &\quad-\dfrac{2p}{(q-h)^{2}}\langle\gradient{h},\gradient{\mathcal{A}}\rangle-\dfrac{2p\mathcal{A}}{(q-h)}G-2\left(\varepsilon-1+\dfrac{p}{q-h}\right)\mathcal{B}e^{\frac{h}{p}(\varepsilon-1)}G,
    \end{split}
\end{equation*}
which proves inequality \eqref{thm-2:eq3}.

Now, in order to prove the desired elliptic gradient estimate, we use a localization and cut-off function argument as well as an application of the maximum principle to a suitably defined localized function. First, consider $x_{0}\in B$, $R\geqslant2$ and denote by $r(x)=d(x,x_{0})$ the distance function from the minimizing geodesic emanating from $x_{0}$. Thus define:
\begin{equation}\label{thm-2:eq4}\phi:B\longrightarrow\reals,\quad \phi(x)=\overline{\phi}(r(x)),
\end{equation}
where $\text{supp}(\phi)\subset B(x_{0}, R)$ and $\overline{\phi}$ is a smooth cut-off function chosen to satisfy the following properties:
\begin{enumerate}[(a)]
    \item $\overline{\phi}:[0,+\infty)\longrightarrow\reals$, 
    $\text{supp}(\overline{\phi})\subset[0,R]$ 
    and $0\leqslant\overline{\phi}\leqslant1$;
    \item $\overline{\phi}=1$ in $[0,\frac{R}{2}]$ and 
    $\overline{\phi}^{\prime}=0$ in $[0,\frac{R}{2}]\cup[R,+\infty)$;
    \item $-C_{\nu}\dfrac{\overline{\phi}^{\nu}}{R}
    \leqslant \overline{\phi}^{\prime}\leqslant0$ 
    and $|\overline{\phi}^{\prime\prime}|\leqslant C_{\nu}
    \dfrac{\overline{\phi}^{\nu}}{R^{2}}$ for any $\nu\in (0,1)$ where $C_{\nu}>0$.
\end{enumerate}

The cut-off function \eqref{thm-2:eq4} was originally defined by Li and Yau~\cite{li1986parabolic} (see also Souplet and Zhang~\cite{souplet2006sharp}). 

To prove the desired estimate at every point $x\in B(x_{0},R/2)$ we consider the localized function $\phi G$ with $\phi$ as in \eqref{thm-2:eq4} and $G$ in \eqref{thm-2:eq2}. Hence, from \eqref{thm-2:eq3}, we have
\begin{equation*}
\begin{split}
&\dfrac{1}{2}\driftedlaplacian{}{\varphi}{(\phi G)}-
     \left[\dfrac{(p-q+h)}{p(q-h)}
     \gradient{h}+
     \dfrac{\gradient{\phi}}{\phi}\right]
     \gradient{(\phi G)}
     \\
     =&
     \dfrac{1}{2}\phi\driftedlaplacian{}{\varphi}{G}
     +
     \langle\gradient{\phi},\gradient{G}\rangle
     +
     \dfrac{1}{2}G\driftedlaplacian{}{\varphi}{\phi}
     -
     \left[\dfrac{(p-q+h)}{p(q-h)}
     \gradient{h}+
     \dfrac{\gradient{\phi}}{\phi}\right]
     (\phi\gradient{G}+G\gradient{\phi})\\
     =&
     \dfrac{1}{2}\phi\driftedlaplacian{}{\varphi}{G}
     -
     \dfrac{\phi(p-q+h)}{p(q-h)}\langle\gradient{h},\gradient{G}\rangle
     -
     \dfrac{(p-q+h)}{p(q-h)}\langle\gradient{h},\gradient{\phi}\rangle G
     +
     \dfrac{1}{2}G\driftedlaplacian{}{\varphi}{\phi}\\
     &-\dfrac{\norm{\gradient{\phi}}^{2}}{\phi}G.\\
     \geqslant&
     -(n-1)\textup{K}\phi G+(q-h)\phi G^{2}-\dfrac{p\phi}{(q-h)^{2}}\langle\gradient{h},\gradient{\mathcal{A}}\rangle-\dfrac{p\mathcal{A}}{(q-h)}\phi G\\
     &-\left(\varepsilon-1+\dfrac{p}{q-h}\right)\mathcal{B}e^{\frac{h}{p}(\varepsilon-1)}\phi G-
     \dfrac{(p-q+h)}{p(q-h)}\langle\gradient{h},\gradient{\phi}\rangle G
     +
     \dfrac{1}{2}G\driftedlaplacian{}{\varphi}{\phi}\\
     &-\dfrac{\norm{\gradient{\phi}}^{2}}{\phi}G.
\end{split}
\end{equation*}

Now, assume that $\phi G$ attains its maximum at the point $x_{1}\in B(x_{0},R)$. Since $r = r(x)$
is only Lipschitz continuous at the cut locus of $x_{0}$, we can apply an argument by Calabi~\cite{calabi1958extension} (see also Cheng and
Yau \cite{cheng1975differential}) to assume without loss of generality that $\phi$ is smooth at $x_{1}$. Consequently, the function defined by $\phi G$ is also smooth at $x_{1}$. 

Let us assume that $(\phi G)(x_{1})>0$. Otherwise, the gradient estimates will be a straightforward computation since $G(x)\leqslant0$
in $B(x_{0},R/2)$. Notice that at the maximum point $x_{1}$, we have the inequalities $\driftedlaplacian{}{\varphi}{(\phi G)}(x_{1})\leqslant0$ and
$\gradient{(\phi G)}(x_{1})=0$. Thus, at $x_{1}$, it is true that
\begin{equation}\label{thm-2:eq5}
\begin{split}
    (q-h)\phi G^{2}\leqslant&\underbrace{(n-1)\textup{K}\phi G}_{\textup{I}}+\underbrace{\dfrac{p-q+h}{p(q-h)}\langle\gradient{h},\gradient{\phi}\rangle G}_{\textup{II}}+\underbrace{\dfrac{\norm{\gradient{\phi}}^{2}}{\phi}G}_{\textup{III}}-\underbrace{\dfrac{1}{2}G\driftedlaplacian{}{\varphi}{\phi}}_{\textup{IV}}\\
    +&\underbrace{\dfrac{p\phi}{(q-h)^{2}}\langle\gradient{h},\gradient{\mathcal{A}}\rangle}_{\textup{V}}+\underbrace{\left(\varepsilon-1+\dfrac{p}{q-h}\right)\mathcal{B}u^{\varepsilon-1}\phi G}_{\textup{VI}} + \underbrace{\dfrac{p\mathcal{A}}{q-h}\phi G}_{\textup{VII}}.
\end{split}
\end{equation}

We now proceed by bounding each of the terms on the right side of \eqref{thm-2:eq5}. For this, we consider two cases: $r(x_{1})\leqslant1$ and $r(x_{1})\geqslant1$.

\textbf{Case 1:} $x_{1}\notin B(x_{0},1)$ i.e. $r(x_{1})\geqslant1$. During the calculations, we will frequently utilize the Cauchy–Schwarz and Young inequalities. In this case, we have the following estimates:

\vspace{0.1cm}
\noindent\textbf{Estimating $\textup{I}$}:
\begin{equation*}
\begin{split}
(n-1)\textup{K}\phi G&\leqslant\dfrac{\delta}{16}\phi G^{2}+c(\delta)\textup{K}^{2}.
\end{split}
\end{equation*}

\noindent \textbf{Estimating $\textup{II}$}:
\begin{align*}
\dfrac{p-q+h}{p(q-h)}\langle\gradient{h},\gradient{\phi}\rangle G \leqslant&
\dfrac{|p-q+h|}{p}\norm{\gradient{\phi}}G^{\frac{3}{2}}\\
=&\left[\frac{2}{3}\phi(q-h)G^{2}\right]^{\frac{3}{4}}\dfrac{\norm{\gradient{\phi}}|p-q+h|}{p\left[\frac{2}{3}\phi(q-h)\right]^{\frac{3}{4}}}\\
\leqslant&\dfrac{1}{2}\phi(q-h)G^{2}+c\left(\dfrac{\norm{\gradient{\phi}}}{\phi^{\frac{3}{4}}}\right)^{4}\frac{|p-q+h|^{4}}{p^{4}(q-h)^{3}}\\
\leqslant&\dfrac{1}{2}\phi(q-h)G^{2}+\frac{c|p-q+h|^{4}}{R^{4} p^{4}(q-h)^{3}}.
\end{align*}

\noindent\textbf{Estimating $\textup{III}$:}
\begin{equation*}
\dfrac{\norm{\gradient{\phi}}^{2}}{\phi}G \!=\! \phi^{\frac{1}{2}}G\dfrac{\norm{\gradient{\phi}}^{2}}{\phi^{\frac{3}{2}}} 
\leqslant\dfrac{\delta}{16}\phi G^{2}+c(\delta)\left(\dfrac{\norm{\gradient{\phi}}^{2}}{\phi^{\frac{3}{2}}}\right)^{2}
\leqslant\dfrac{\delta}{16}\phi G^{2}+\dfrac{c(\delta)}{R^{4}}.
\end{equation*}

From Wei-Wiley Laplacian comparison theorem (\cite[Theorem 3.1]{wei2009comparison}), we know that
\[\driftedlaplacian{}{\varphi}{r(x)}
\leqslant \gamma_{\driftedlaplacian{}{\varphi}{}}+
(n-1)\textup{K}(R-1),\]
where $\gamma_{\driftedlaplacian{}{\varphi}{}}:=
\max_{\partial B(x_{0},1)}\driftedlaplacian{}{\varphi}{r}$. Then we can estimate $\textup{IV}$ as follows.

\vspace{0.1cm}
\noindent\textbf{Estimating $\textup{IV}$:}
\begin{equation}\label{thm-2:eq6}
    \begin{split}
-\dfrac{1}{2}G\driftedlaplacian{}{\varphi}{\phi}
&=-\dfrac{1}{2}G\left[\overline{\phi}' \driftedlaplacian{}{\varphi}{r}+\overline{\phi}''
\norm{\gradient{r}}^{2}\right]
\\
&\leqslant-\dfrac{1}{2}G\left\{\overline{\phi}'[\gamma_{\driftedlaplacian{}{\varphi}{}}+
(n-1)\textup{K}(R-1)]+
\overline{\phi}''\right\}
\\
&\leqslant G\left\{|\overline{\phi}'|[\gamma_{\driftedlaplacian{}{\varphi}{}}+
(n-1)\textup{K}(R-1)]+
|\overline{\phi}''|\right\}
\\
&\leqslant \phi^{\frac{1}{2}}G\left\{\dfrac{|\overline{\phi}'|}{\phi^{\frac{1}{2}}}[\gamma_{\driftedlaplacian{}{\varphi}{}}+
(n-1)\textup{K}(R-1)]\right\}+
\phi^{\frac{1}{2}}G\dfrac{|\overline{\phi}''|}{\phi^{\frac{1}{2}}}
\\
&\leqslant\dfrac{\delta}{16}\phi G^{2}+c(\delta)\dfrac{|\overline{\phi}''|^{2}}{\phi}+c(\delta)[\gamma_{\driftedlaplacian{}{\varphi}{}}]^{2}\dfrac{|\overline{\phi}'|^{2}}{\phi}+c(n,\delta)\textup{K}^{2}(R-1)^{2}\dfrac{|\overline{\phi}'|^{2}}{\phi}\\
&\leqslant\dfrac{\delta}{16}\phi G^{2}+\dfrac{c(\delta)}{R^{4}}+\dfrac{c(\delta)[\gamma_{\driftedlaplacian{}{\varphi}{}}]^{2}}{R^{2}}+c(n,\delta)\textup{K}^{2}.
\end{split}
\end{equation}

\noindent\textbf{Estimating $\textup{V}$:}
\begin{equation*}
\dfrac{p\phi}{(q-h)^{2}}\langle\gradient{h},\gradient{\mathcal{A}}\rangle\!\!\leqslant\!\! \dfrac{p\phi}{(q-h)^{2}}\norm{\gradient{h}}\norm{\gradient{\mathcal{A}}}
\!\!=\!\!\dfrac{p\phi\norm{\gradient{\mathcal{A}}}G^{\frac{1}{2}}}{q-h}
\!\!\leqslant\!\!\dfrac{\delta}{16}\phi G^{2}+c(\delta)\!\!\!\!\!\sup_{B(x_{0},R)}\!\!\!\!\norm{\gradient{\mathcal{A}}}^{\frac{4}{3}}.
\end{equation*}

\noindent\textbf{Estimating $\textup{VI}$:}
\begin{align*}
&\left(\varepsilon-1+\dfrac{p}{q-h}\right)\mathcal{B}u^{\varepsilon-1}\phi G 
\leqslant \left[\left(\varepsilon-1+\dfrac{p}{q-h}\right)\mathcal{B}\right]^{+}u^{\varepsilon-1}\phi G\\
&\qquad\leqslant\dfrac{\delta}{16}\phi G^{2}+c(\delta)\left\{\left[\left(\varepsilon-1+\dfrac{p}{q-h}\right)\mathcal{B}\right]^{+}\right\}^{2}\sup_{B(x_{0},R)}u^{2(\varepsilon-1)}.
\end{align*}

\noindent\textbf{Estimating $\textup{VII}$:}
\begin{equation*}
\begin{split}
\dfrac{p\mathcal{A}}{q-h}\phi G&\leqslant \dfrac{\delta}{16}\phi G^{2}+c(p,\delta)(\mathcal{A}^{+})^{2}.
\end{split}
\end{equation*}
Combining \eqref{thm-2:eq5} with all the above estimates, we arrive at
\begin{align*}
\frac{1}{2}(q-h)\phi G^{2} & \leqslant \frac{c|p-q+h|^{4}}{R^{4} p^{4}(q-h)^{3}}+\dfrac{6\delta}{16}\phi G^{2}+\dfrac{c(\delta)}{R^{4}}+\dfrac{c(\delta)[\gamma_{\driftedlaplacian{}{\varphi}{}}]^{2}}{R^{2}}+c(n,\delta)\textup{K}^{2}\\
&\quad+c(p,\delta)(\mathcal{A}^{+})^{2}+c(\delta)\left\{\left[\left(\varepsilon-1+\dfrac{p}{q-h}\right)\mathcal{B}\right]^{+}\right\}^{2}\!\!\!\sup_{B(x_{0},R)}\!\!u^{2(\varepsilon-1)}\\
&\quad+c(\delta)\sup_{B(x_{0},R)}\norm{\gradient{\mathcal{A}}}^{\frac{4}{3}}.
\end{align*}
Since $q-h\geqslant \delta>0$, the previous estimate implies
\begin{align*}
    \phi G^{2}\leqslant& \frac{c|p-q+h|^{4}}{R^{4} p^{4}(q-h)^{4}}+\dfrac{c(\delta)}{R^{4}}+\dfrac{c(\delta)[\gamma_{\driftedlaplacian{}{\varphi}{}}]^{2}}{R^{2}}+c(n,\delta)\textup{K}^{2}+c(\delta)\sup_{B(x_{0},R)}\norm{\gradient{\mathcal{A}}}^{\frac{4}{3}}\\
&+c(p,\delta)(\mathcal{A}^{+})^{2}+c(\delta)\left\{\left[\left(\varepsilon-1+\dfrac{p}{q-h}\right)\mathcal{B}\right]^{+}\right\}^{2}\sup_{B(x_{0},R)}u^{2(\varepsilon-1)}\\
\leqslant& \dfrac{c(p,\delta)}{R^{4}}+\dfrac{c(\delta)[\gamma_{\driftedlaplacian{}{\varphi}{}}]^{2}}{R^{2}}+c(n,\delta)\textup{K}^{2}+c(\delta)\sup_{B(x_{0},R)}\norm{\gradient{\mathcal{A}}}^{\frac{4}{3}}\\
&+c(p,\delta)(\mathcal{A}^{+})^{2}+c(\delta)\left\{\left[\left(\varepsilon-1+\dfrac{p}{q-h}\right)\mathcal{B}\right]^{+}\right\}^{2}\sup_{B(x_{0},R)}u^{2(\varepsilon-1)},
\end{align*}
where we have used that
\begin{equation*}
   \frac{|p-q+h|}{p(q-h)}\leqslant\frac{|p|+|h-q|}{p(q-h)}=\frac{1}{q-h}+\frac{1}{p}\leqslant\frac{1}{\delta}+\frac{1}{p}.
\end{equation*}
So,
\begin{equation*}
\begin{split}
    (\phi^{2} G^{2})(x_{1})&\leqslant (\phi G^{2})(x_{1})\\
    &\leqslant\dfrac{c(p,\delta)}{R^{4}}+\dfrac{c(\delta)[\gamma_{\driftedlaplacian{}{\varphi}{}}]^{2}}{R^{2}}+c(n,\delta)\textup{K}^{2}+c(\delta)\sup_{B(x_{0},R)}\norm{\gradient{\mathcal{A}}}^{\frac{4}{3}}\\
&\quad+c(p,\delta)(\mathcal{A}^{+})^{2}+c(\delta)\left\{\left[\left(\varepsilon-1+\dfrac{p}{q-h}\right)\mathcal{B}\right]^{+}\right\}^{2}\sup_{B(x_{0},R)}u^{2(\varepsilon-1)}.
\end{split}
\end{equation*}
Since $\phi\equiv1$ in $r(x)<\frac{R}{2}$, it follows that
\begin{equation*}
\begin{split}
    G(x)&= (\phi G)(x)
    \leqslant (\phi G)(x_{1})\\
    &\leqslant\dfrac{c(p,\delta)}{R^{2}}+\dfrac{c(\delta)[\gamma_{\driftedlaplacian{}{\varphi}{}}]}{R}+c(n,\delta)\textup{K}+c(\delta)\sup_{B(x_{0},R)}\norm{\gradient{\mathcal{A}}}^{\frac{2}{3}}+c(p,\delta)\mathcal{A}^{+}\\
&\quad+c(\delta)\left[\left(\varepsilon-1+\dfrac{p}{q-h}\right)\mathcal{B}\right]^{+}\sup_{B(x_{0},R)}u^{(\varepsilon-1)}
\end{split}
\end{equation*}
that implies 
\begin{equation*}
\begin{split}
    \dfrac{\norm{\gradient{h}}}{q-h}(x)&\leqslant\dfrac{c(p,\delta)}{R}+\dfrac{c(\delta)\sqrt{\gamma_{\driftedlaplacian{}{\varphi}{}}}}{\sqrt{R}}+c(\delta,n)\sqrt{\textup{K}}+c(\delta)\sup_{B(x_{0},R)}\norm{\gradient{\mathcal{A}}}^{\frac{1}{3}}+c(p,\delta)\sqrt{\mathcal{A}^{+}}\\
    &\quad+c(\delta)\bigg{\{}\left[\left(\varepsilon-1+\dfrac{p}{q-h}\right)\mathcal{B}\right]^{+}\bigg{\}}^{\frac{1}{2}}\sup_{B(x_{0},R)}u^{\frac{(\varepsilon-1)}{2}},
\end{split}
\end{equation*}
for all $x\in B$ such that $r(x) < R/2$. Since $h = p\ln u$, substituting this into the above estimate completes the proof of theorem when $x_{1}\notin B(x_{0}, 1) \subset B(x_{0}, R)$ and $R\geqslant2$.

\textbf{Case 2:} The maximum point $x_{1}\in B(x_{0},1)$ i.e. $r(x_{1})<1$. In this case, for any $R\geqslant 2$, the function $\phi$ is constant on $B(x_{0}, R/2)$. Hence, from equation \eqref{thm-2:eq5}, we have
\begin{align*}
\nonumber    (q-h)\phi G^{2}\leqslant& \underbrace{(n-1)\textup{K}\phi G}_{\textup{I}}+\underbrace{\dfrac{p\phi}{(q-h)^{2}}\langle\gradient{h},\gradient{\mathcal{A}}\rangle}_{\textup{V}}\\
 \nonumber    &+\underbrace{\left(\varepsilon-1+\dfrac{p}{q-h}\right)\mathcal{B}u^{\varepsilon-1}\phi G}_{\textup{VI}}+\underbrace{\dfrac{p\mathcal{A}}{q-h}\phi G}_{\textup{VII}},\\
 \leqslant& \dfrac{\delta}{4}\phi G^{2}+c(\delta)\textup{K}^{2}+c(\delta)\sup_{B(x_{0},R)}\norm{\gradient{\mathcal{A}}}^{\frac{4}{3}}+c(p,\delta)(\mathcal{A}^{+})^{2}\\ \nonumber
&+c(\delta)\left\{\left[\left(\varepsilon-1+\dfrac{p}{q-h}\right)\mathcal{B}\right]^{+}\right\}^{2}\sup_{B(x_{0},R)}u^{2(\varepsilon-1)}.
\end{align*}
Since $q-h\geqslant \delta>0$, the previous estimate implies
\begin{align*}
\phi G^{2}\leqslant & \; c(\delta)\textup{K}^{2}+c(\delta)\sup_{B(x_{0},R)}\norm{\gradient{\mathcal{A}}}^{\frac{4}{3}}+c(p,\delta)(\mathcal{A}^{+})^{2}\\
&+c(\delta)\left\{\left[\left(\varepsilon-1+\dfrac{p}{q-h}\right)\mathcal{B}\right]^{+}\right\}^{2}\sup_{B(x_{0},R)}u^{2(\varepsilon-1)}.
\end{align*}
Recall that $\phi(x_{1})=1$ and $\phi(x)=1$ when $r(x)<R/2$. So,
\begin{align*}
    G(x)&= (\phi G)(x)
    \leqslant (\phi G)(x_{1})\\
    &\leqslant c(\delta)\textup{K}\!+\!c(\delta)\!\!\!\!\sup_{B(x_{0},R)}\!\!\!\!\norm{\gradient{\mathcal{A}}}^{\frac{2}{3}}\!\!+c(\delta)\!\!\left[\left(\varepsilon-1+\dfrac{p}{q-h}\right)\mathcal{B}\right]^{+}\!\!\!\!\! \sup_{B(x_{0},R)}\!\!\!\! u^{(\varepsilon-1)}\!\! + c(p,\delta)\mathcal{A}^{+}
\end{align*}
for all $x\in M$ such that $r(x)<R/2$. By the definition of $G(x)$, we prove that the estimate in the theorem still holds when $x_{1}\in B(x_{0},1)$.
\end{proof}

As an application of Theorem~\ref{Th2}, we prove Corollary~\ref{Cor1-triviality}.

\begin{proof} [Proof of Corollary~\ref{Cor1-triviality}] According to item \eqref{2b} of Theorem~\ref{thm:gEtM-c}, the function $u=f^{\frac{1}{\sigma(m)}}$ provides a solution of \eqref{eq:lichnerowicz-type-equation} on $B^n$. Then, if we fix a point ${x_{0}}\in B$ and apply Theorem~\ref{Th2} with parameters
\[\mathcal{A}(x)= \frac{\rho\scalarcurvature<g_{B}>(x)+\lambda(x)}{(\alpha-2m\rho)\sigma(m)},
\quad \mathcal{B}=-\frac{(\alpha-m\rho)R_{g_{F}}}{m(\alpha-2m\rho)\sigma(m)} \quad \hbox{and}\quad\varepsilon=1-2\sigma(m),\]
we get 
\begin{align*}
    \norm{\gradient{\ln u}}\leqslant & \; C \left[q-p\ln u\right]\Bigg{\{}\dfrac{1}{R}+\dfrac{\sqrt{\gamma_{\driftedlaplacian{}{w}{}}}}{\sqrt{R}}+o(R^{-\frac{1}{2}})\\
    &+\sqrt{\left[\left(\varepsilon-1+\dfrac{p}{q-p\ln u(x_{1})}\right)\mathcal{B}\right]^{+}}\sup_{B(x_{0},R)}u^{\frac{(\varepsilon-1)}{2}}\Bigg{\}}
\end{align*}
in $B(x_{0},R/2)$, where $x_{1}$ is a maximum point of $\phi G$  as mentioning in the proof of Theorem~\ref{Th2}. 

To prove item (a), we choose $p=1$ and $q=\frac{1}{\sigma(m)}+\ln D$ so that
\begin{equation*}
\begin{split}
  \varepsilon-1+\dfrac{p}{q-p\ln u(x_{1})}=-2\sigma(m)+\dfrac{1}{\frac{1}{\sigma(m)}+\ln\left(\frac{D}{u}\right)}\leqslant-\sigma(m)\leqslant0.
\end{split}
\end{equation*}
Since $\mathcal{B}\geqslant0$, one has
\[\left[\left(\varepsilon-1+\dfrac{p}{q-p\ln u(x_{1})}\right)\mathcal{B}\right]^{+}=0,\]
and then
\begin{equation*}
\begin{split}
    \norm{\gradient{\ln u(x_{0})}}&\leqslant C\left\{\frac{1}{\sigma(m)}+\ln\left[\frac{D}{u(x_{0})}\right]\right\}\Bigg{\{}\dfrac{1}{R}+\dfrac{\sqrt{\gamma_{\driftedlaplacian{}{w}{}}}}{\sqrt{R}}+o(R^{-\frac{1}{2}})\Bigg{\}}.
\end{split}
\end{equation*}
Now, since $\ln f(x)=o(r^{\frac{1}{2}}(x))$ near infinity, we also have $\ln u(x)=o(r^{\frac{1}{2}}(x))$ near infinity. Thus,
\begin{equation*}
\begin{split}
    \norm{\gradient{\ln u(x_{0})}}&\leqslant C\left[\frac{1}{\sigma(m)}+o(\sqrt{R})-\ln u(x_{0})\right]\Bigg{\{}\dfrac{1}{R}+\dfrac{\sqrt{\gamma_{\driftedlaplacian{}{w}{}}}}{\sqrt{R}}+o(R^{-\frac{1}{2}})\Bigg{\}}.
\end{split}
\end{equation*}
Letting $R\to+\infty$, we have $\norm{\gradient{\ln u(x_{0})}}=0$. Since $x_{0}$ is arbitrary, we get $u(x)=c\in\reals$ for all $x\in B$. Therefore, $f$ is constant.

To prove item (b), we consider $q=1+\ln\left[\frac{D}{u(x_{1})}\right]+p\ln u(x_{1})$. In this case, 
\begin{equation*}
\begin{split}
  \varepsilon-1+\dfrac{p}{q-p\ln u(x_{1})}&=-2\sigma(m)+\dfrac{p}{1+\ln\left[\frac{D}{u(x_{1})}\right]}.
\end{split}
\end{equation*}
Since 
\[\dfrac{1}{1+\ln\left[\frac{D}{u(x_{1})}\right]}>0,\]
we can choose a constant $p$ such that 
\begin{equation*}
\begin{split}
  \varepsilon-1+\dfrac{p}{q-p\ln u(x_{1})}&=-2\sigma(m)+\dfrac{p}{1+\ln\left[\frac{D}{u(x_{1})}\right]}\geqslant0.
\end{split}
\end{equation*}
Hence, from  $\mathcal{B}<0$, we obtain
\[\left[\left(\varepsilon-1+\dfrac{p}{q-p\ln u(x_{1})}\right)\mathcal{B}\right]^{+}=0,\]
and then
\begin{equation*}
\begin{split}
    \norm{\gradient{\ln u(x_{0})}}&\leqslant C\left\{1+\ln\left[\frac{D}{u(x_{0})}\right]\right\}\Bigg{\{}\dfrac{1}{R}+\dfrac{\sqrt{\gamma_{\driftedlaplacian{}{w}{}}}}{\sqrt{R}}+o(R^{-\frac{1}{2}})\Bigg{\}}.
\end{split}
\end{equation*}
The growth condition of $f$ implies
\begin{equation*}
\begin{split}
    \norm{\gradient{\ln u(x_{0})}}&\leqslant C\left[1+o(\sqrt{R})-\ln u(x_{0})\right]\Bigg{\{}\dfrac{1}{R}+\dfrac{\sqrt{\gamma_{\driftedlaplacian{}{w}{}}}}{\sqrt{R}}+o(R^{-\frac{1}{2}})\Bigg{\}}.
\end{split}
\end{equation*}
Letting $R\to+\infty$, we have $\norm{\gradient{\ln u(x_{0})}}=0$, which implies that $u$ is constant and consequently $f$ is constant.
\end{proof}

We pointed out that the assumptions on the constant $R_{g_F}(\alpha-m\rho)/(\alpha-2m\rho)$ of items~\eqref{3a} and \eqref{3b} of Corollary~\ref{Cor1-triviality} cannot be droped. Indeed, it is a consequence of our next result.

\begin{corollary}\label{Cor2-nonexistence}
It is not possible to construct gradient Einstein-type warped product $B^n\times_{f} F^m$ satisfying equation~\eqref{eq:lichnerowicz-type-equation} on $B^n$ with $\textup{Ric}_{g_B}^{w}\geqslant 0$ and coefficients satisfying either 
\begin{enumerate}[(a)]
\item\label{4a} $\sigma(m)>0$, \ $\dfrac{\rho\scalarcurvature<g_{B}>+\lambda}{\alpha-2m\rho}\leqslant 0$ $(=0)$ \ and \ $\dfrac{R_{g_F}(\alpha-m\rho)}{\alpha-2m\rho}>0$ $(\neq0)$,\quad\hbox{or}
\item\label{4b} $\sigma(m)<0$, \ $\dfrac{\rho\scalarcurvature<g_{B}>+\lambda}{\alpha-2m\rho}\geqslant 0$ $(=0)$ \ and \ $\dfrac{R_{g_F}(\alpha-m\rho)}{\alpha-2m\rho}<0$ $(\neq0)$
\end{enumerate}
both cases with asymptotic behavior 
$$\sup_{B\left(x_{0},R\right)}
\left\|\rho\gradient{\scalarcurvature<g_{B}>+\gradient{\lambda}}\right\|=o(R^{-\frac{3}{2}}) \;\; as \;\; R\to+\infty$$
and warping function satisfying $f(x)=e^{o(r^{\frac{1}{2}}(x))}$ near infinity.
\end{corollary}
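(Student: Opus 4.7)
I proceed by contradiction, paralleling the proof of Corollary~\ref{Cor1-triviality} and then extracting an algebraic inconsistency from what remains of~\eqref{eq:lichnerowicz-type-equation} once the gradient estimate has forced $f$ to be constant. Suppose a gradient Einstein-type warped product $B^n\times_f F^m$ satisfying the stated hypotheses exists. By item~\eqref{2b} of Theorem~\ref{thm:gEtM-c}, the function $u=f^{1/\sigma(m)}$ is a positive smooth solution on $B^n$ of~\eqref{eq:lichnerowicz-type-equation}, which is an instance of~\eqref{PDE-Th2} with
\[
\mathcal{A}(x)=\frac{\rho\scalarcurvature<g_{B}>(x)+\lambda(x)}{(\alpha-2m\rho)\sigma(m)},\qquad
\mathcal{B}=-\frac{(\alpha-m\rho)R_{g_{F}}}{m(\alpha-2m\rho)\sigma(m)},\qquad
\varepsilon=1-2\sigma(m),
\]
and drift $w=\beta h/(\alpha-2m\rho)$.

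A direct sign analysis shows that in both cases~\eqref{4a} and~\eqref{4b} one has $\mathcal{A}\leqslant 0$ (so $\mathcal{A}^+\equiv 0$) and $\mathcal{B}<0$ strictly: in case~\eqref{4a} the factor $1/\sigma(m)$ is positive, while in case~\eqref{4b} it is negative and simultaneously flips the signs of both numerators. Next I would apply Theorem~\ref{Th2} at an arbitrary $x_0\in B$, with $R\geqslant 2$, $D=\sup_{B(x_0,R)}u$, and the same auxiliary parameters $p>0$, $q=1+\ln[D/u(x_1)]+p\ln u(x_1)$ used in part~\eqref{3b} of the proof of Corollary~\ref{Cor1-triviality}, picking $p$ large enough that
\[
\varepsilon-1+\frac{p}{q-p\ln u(x_1)}\geqslant 0
\]
(which is automatic in case~\eqref{4b} since $\varepsilon-1=-2\sigma(m)>0$). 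Because $\mathcal{B}<0$, the positive part $\bigl[(\varepsilon-1+p/(q-p\ln u(x_1)))\mathcal{B}\bigr]^+$ vanishes, and together with $\mathcal{A}^+\equiv 0$ and the asymptotic hypothesis on $\rho\gradient{\scalarcurvature<g_{B}>}+\gradient{\lambda}$, the estimate of Theorem~\ref{Th2} reduces to
\[
\|\gradient{\ln u(x_0)}\|\leqslant C(q-p\ln u(x_0))\left\{\frac{1}{R}+\frac{\sqrt{\gamma_{\driftedlaplacian{}{w}{}}}}{\sqrt{R}}+o(R^{-1/2})\right\}.
\]

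Invoking $\ln u(x)=o(r^{1/2}(x))$, which follows from $f(x)=e^{o(r^{1/2}(x))}$, and letting $R\to+\infty$, we obtain $\|\gradient{\ln u(x_0)}\|=0$. Since $x_0$ is arbitrary, $u$, and hence $f$, must be constant on $B^n$. Substituting this constant $u$ into~\eqref{eq:lichnerowicz-type-equation} annihilates the drifted Laplacian term and leaves
\[
\frac{\rho\scalarcurvature<g_{B}>+\lambda}{\alpha-2m\rho}=\frac{(\alpha-m\rho)R_{g_{F}}}{m(\alpha-2m\rho)}\,u^{-2\sigma(m)}.
\]
In case~\eqref{4a} the left-hand side is nonpositive while the right-hand side is strictly positive (since $u>0$ and $R_{g_F}(\alpha-m\rho)/(\alpha-2m\rho)>0$); in case~\eqref{4b} the inequalities reverse. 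Either gives a contradiction, completing the nonexistence argument. The parenthetical alternatives, where the first quotient vanishes identically and the second is merely nonzero, are handled by the same scheme, with the choice of $p,q$ adjusted to the sign of $\mathcal{B}$ as in either part~\eqref{3a} or~\eqref{3b} of Corollary~\ref{Cor1-triviality}; the terminal contradiction then reads $\mathcal{B}u^{\varepsilon-1}=0$ with $\mathcal{B}\neq 0$.

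The main technical obstacle is the coordinated choice of $p$ and $q$ ensuring both $q-p\ln u\geqslant\delta>0$ on the whole ball $B(x_0,R)$ and the required sign condition on $\varepsilon-1+p/(q-p\ln u(x_1))$ at the maximum point $x_1$ of the localized function $\phi G$; this balancing is precisely what was already accomplished in the proof of Corollary~\ref{Cor1-triviality}, with the uniform local upper bound $u\leqslant D$ supplying the positive lower bound on $q-p\ln u$ and the freedom in $p$ fixing the sign in case~\eqref{4a}.
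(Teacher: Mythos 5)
Your proposal is correct and follows essentially the same route as the paper: reduce to a positive solution $u=f^{1/\sigma(m)}$ of \eqref{eq:lichnerowicz-type-equation}, run the gradient estimate of Theorem~\ref{Th2} exactly as in Corollary~\ref{Cor1-triviality} (here with $\mathcal{A}^{+}\equiv 0$ and $\mathcal{B}<0$ in both cases, so the large-$p$ choice of part~\eqref{3b} applies) to force $u$ constant, and then read off the sign contradiction from the resulting algebraic identity. The paper's own proof is just a terser version of this, citing the Corollary~\ref{Cor1-triviality} argument for the constancy of $u$ and concluding with the same substitution.
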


\begin{proof}[Proof of Corollary~\ref{Cor2-nonexistence}]Suppose that there exists a gradient Einstein-type manifold satisfying the conditions of Corollary~\ref{Cor2-nonexistence}. Then $u=f^{\frac{1}{\sigma(m)}}$ provides a solution of \eqref{eq:lichnerowicz-type-equation} on $B^n$. Analogously to the proof of Corollary~\ref{Cor1-triviality}, we can show that $u$ is constant if either item~\eqref{4a} or item~\eqref{4b} holds. Substituting $u(x)\equiv c>0$ into \eqref{eq:lichnerowicz-type-equation} we get
\begin{equation*}
  \frac{\rho\scalarcurvature<g_{B}>+\lambda}{\alpha-2m\rho}
  =
  \frac{R_{g_{F}}(\alpha-m\rho)}{(\alpha-2m\rho)m}c^{-2\sigma(m)},
\end{equation*}
which is a contradiction. So, the Einstein-type metric cannot exist.
\end{proof}

\section{Further discussions}\label{Further-discussions}

Here, we provide applications of Corolaries~\ref{Cor1-triviality} and \ref{Cor2-nonexistence} on particular cases of gradient Einstein-type warped products $B^{n}\times_{f}F^{m}$, such as Ricci solitons, $\rho$-Einstein solitons and Einstein manifolds. In all cases, we continue with our standard notations $R_g$, $R_{g_B}$ and $R_{g_F}$ for the scalar curvature of $g=g_B+f^2g_F$, $g_B$ and $g_F$, respectively.  The reader will be able to observe that these corollaries provide applications for other solitons like gradient Yamabe solitons, quasi-Yamabe solitons, quasi-Einstein manifolds, gradient Ricci almost solitons and others. 

\subsection{Gradient Ricci soliton case} 
By using gradient estimates for the warping function, Borges~\cite[Corollary~1.5]{Valter} proved that any gradient expanding Ricci soliton $B^{n}\times_{f}F^{m}$ with noncompact base $B^{n}$, warping function $f\leqslant\sqrt{-R_{g_{F}}/m}$ and $R_{g_{F}}<0$ must be a standard Riemannian product. Our next result is a consequence of Corollary~\ref{Cor1-triviality} and complements Borges's result, enabling the analysis of gradient Ricci solitons with unbounded warping functions.

\begin{corollary}Let $B^n\times_{f} F^m$ be a gradient expanding Ricci soliton warped product with noncompact base, $f(x)=e^{o(r^{\frac{1}{2}}(x))}$ near infinity, $\textup{Ric}_{g_{B}}^{h}\geqslant 0$ and $R_{g_F}< 0$. Then, it must be a standard Riemannian product.
\end{corollary}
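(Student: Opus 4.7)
The plan is to recognize that the gradient expanding Ricci soliton equation $\textup{Ric}_g+\nabla^2 h=\lambda g$ with $\lambda<0$ is the special case of~\eqref{fundamental} with parameters $\alpha=\beta=1$, $\mu=\rho=0$ and $\tilde\lambda=\lambda$, and then to invoke Corollary~\ref{Cor1-triviality}\eqref{3a}. With these parameters one has $\sigma(m)=1/m>0$ and the drift $w=\beta h/(\alpha-2m\rho)$ equals $h$, so the standing Bakry--\'Emery hypothesis $\textup{Ric}_{g_B}^{w}\geqslant 0$ coincides with the assumption $\textup{Ric}_{g_B}^{h}\geqslant 0$. Moreover, $(\rho R_{g_B}+\lambda)/(\alpha-2m\rho)=\lambda<0$ and $R_{g_F}(\alpha-m\rho)/(\alpha-2m\rho)=R_{g_F}<0$, so the sign conditions of~\eqref{3a} hold with strict inequalities.

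Before applying the corollary I must verify its standing assumption that the potential $h$ is the lift of a smooth function on the base. I argue by cases: if $f$ is constant there is nothing to prove, since $B^n\times_{f}F^m$ is already a standard Riemannian product. If $f$ is nonconstant, Proposition~\ref{thm:wpgEtM-and-lfB}\eqref{1a} applies (as $\mu=0$) and reduces the verification to checking that $\rho R_{g_F}+\lambda f^2$ is the lift of a smooth function on $B^n$; this is immediate because $\rho=0$ and $\lambda$ is constant, so $\lambda f^2$ already depends only on the base coordinates. Hence $h$ is a lift from the base.

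The remaining hypotheses of Corollary~\ref{Cor1-triviality}\eqref{3a} are trivially satisfied: the asymptotic condition $\sup_{B(x_0,R)}\|\rho\nabla R_{g_B}+\nabla\lambda\|=o(R^{-3/2})$ holds because $\rho=0$ and $\lambda$ is constant, while the growth hypothesis $f(x)=e^{o(r^{1/2}(x))}$ near infinity is explicitly assumed. Applying the corollary then forces $f$ to be constant, contradicting the nonconstancy assumption and leaving the standard Riemannian product as the only possibility. I do not anticipate a substantive obstacle here, since Corollary~\ref{Cor1-triviality} was tailored precisely to cover this type of situation; the only subtlety is the reduction of $h$ to a lift from the base, which Proposition~\ref{thm:wpgEtM-and-lfB}\eqref{1a} supplies at no cost in this setting.
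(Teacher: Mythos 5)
Your proposal is correct and matches the paper's intent exactly: the paper offers no separate argument for this corollary beyond declaring it a consequence of Corollary~\ref{Cor1-triviality}, and your verification of the parameters $\alpha=\beta=1$, $\mu=\rho=0$, $\sigma(m)=1/m>0$, $w=h$, together with the sign and asymptotic conditions of item~\eqref{3a}, is precisely the intended instantiation. The extra step of confirming via Proposition~\ref{thm:wpgEtM-and-lfB}\eqref{1a} that $h$ is a lift from the base is a welcome (and consistent) check of the paper's standing assumption rather than a deviation.
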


In~\cite{tokura2022nonexistence}, the second author in joint work with Adriano, Pina and Barboza proved that there is no gradient steady Ricci soliton $B^{n}\times_{f}F^{m}$ with $R_{g_{F}}<0$, potential function $h$ and $k$-Bakry-Emery Ricci tensor satisfying
\[
\riccitensor<g_B>+\hessian{h}-\frac{1}{k}dh\otimes dh\geqslant0.
\]
Here, by assuming the less restrictive condition $\textup{Ric}^{h}_{g_B}\geqslant0$ we prove the following result as a consequence of Corollary~\ref{Cor2-nonexistence}.

\begin{corollary}\label{sec:discussions-1}There is no gradient steady Ricci soliton warped product $B^{n}\times_{f}F^{m}$ with $f(x)=e^{o(r^{\frac{1}{2}}(x))}$ near infinity,  $\textup{Ric}^{h}_{g_B}\geqslant0$ and $R_{g_{F}}\neq0$.
\end{corollary}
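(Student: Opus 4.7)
The plan is to realize a gradient steady Ricci soliton warped product as an instance of the fundamental equation~\eqref{fundamental} with parameters $\alpha=\beta=1$, $\mu=\rho=0$ and $\lambda\equiv 0$, so that the machinery of Corollaries~\ref{Cor1-triviality} and~\ref{Cor2-nonexistence} becomes directly applicable.

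First I would compute the relevant coefficients of~\eqref{eq:lichnerowicz-type-equation} for these parameters, obtaining
\[
\sigma(m)=\frac{1}{m}>0,\quad w=\frac{\beta h}{\alpha-2m\rho}=h,\quad \frac{\rho\scalarcurvature<g_{B}>+\lambda}{\alpha-2m\rho}=0,\quad \frac{(\alpha-m\rho)R_{g_F}}{m(\alpha-2m\rho)}=\frac{R_{g_F}}{m}.
\]
The assumption $\textup{Ric}^h_{g_B}\geqslant 0$ is exactly $\textup{Ric}^w_{g_B}\geqslant 0$, the asymptotic hypothesis on $\rho\gradient{\scalarcurvature<g_{B}>}+\gradient{\lambda}$ is trivial since $\rho=0$ and $\lambda$ is constant, and the growth bound on $f$ is built in. Thus both corollaries apply.

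Next I would split the argument on the sign of $R_{g_F}$. When $R_{g_F}>0$, the data place the soliton in item~\eqref{4a} of Corollary~\ref{Cor2-nonexistence} and nonexistence follows immediately. When $R_{g_F}<0$, the data instead land in item~\eqref{3a} of Corollary~\ref{Cor1-triviality}, which forces the warped product to degenerate to a standard Riemannian product, i.e.\ $f$ is constant.

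To close the $R_{g_F}<0$ case, I would substitute a constant $f$ into item~\eqref{2b} of Theorem~\ref{thm:gEtM-c}; since $\gradient{f}=0$ and $\laplacian{f}=0$, the fiber equation reduces to $R_{g_F}/m=0$, contradicting the hypothesis $R_{g_F}\neq 0$. The only real subtlety I foresee is the two-step handling of the $R_{g_F}<0$ branch: Corollary~\ref{Cor2-nonexistence} cannot be used there directly because its alternative~\eqref{4b} demands $\sigma(m)<0$, which is incompatible with the steady Ricci soliton parameters; the detour through triviality via Corollary~\ref{Cor1-triviality} and the resulting contradiction with the fiber equation is what fills this gap.
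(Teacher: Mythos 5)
Your proposal is correct and follows essentially the paper's route: the paper obtains this corollary as a direct application of Corollary~\ref{Cor2-nonexistence}\eqref{4a}, where with the steady-soliton parameters $\alpha=\beta=1$, $\mu=\rho=0$, $\lambda=0$ one has $\sigma(m)=1/m>0$, $\frac{\rho R_{g_B}+\lambda}{\alpha-2m\rho}=0$ and $\frac{(\alpha-m\rho)R_{g_F}}{\alpha-2m\rho}=R_{g_F}\neq 0$, which is exactly the parenthetical variant ``$(=0)$ and $(\neq 0)$'' of that item and so covers both signs of $R_{g_F}$ at once. Your extra case split — handling $R_{g_F}<0$ via Corollary~\ref{Cor1-triviality}\eqref{3a} followed by the fiber equation of Theorem~\ref{thm:gEtM-c}\eqref{2b} — is valid but unnecessary, since item~\eqref{4a} of Corollary~\ref{Cor2-nonexistence} already disposes of that branch.
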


Corollaries~\ref{sec:discussions-1} tell us that by starting with a gradient steady Ricci soliton $(B^n,g_B)$, that is, $\textup{Ric}^{h}_{g_B}=0$, it is impossible to utilize such manifold as the base of a steady gradient Ricci soliton warped product $ B^n\times_{f}F^m$  with potential $h\circ\pi_{B}$,  $f(x)=e^{o(r^{\frac{1}{2}}(x))}$ near infinity and $R_{g_{F}}\neq0$. 

In~\cite{gomes2021note}, the first author together with Marrocos and Ribeiro proved that it is not possible to construct a gradient expanding Ricci soliton $B^{n}\times_{f}F^{m}$ with $R_{g_F}\geqslant 0$ and warping function $f$ satisfying either
$$f\in L^{1}(B^{n},e^{-\psi}dv_{g_B})\;\;\mbox{with}\;\;f(x)=O(e^{ar(x)^{2-b}})\;\;\mbox{as}\;\;r(x)\to+\infty,$$
for some constants $a,b>0,$  or
$$f\in L^{p}(B^{n},e^{-\psi}dv_{g_B}),\;\; 1<p\leqslant+\infty,$$  
where $\psi=h-m\ln{f}.$

Our next corollary provides a nonexistence result in a different setting as studied in~\cite[Theorem 1.3]{gomes2021note}.

\begin{corollary}\label{sec:discussions-2}There is no gradient expanding Ricci soliton warped product $B^n\times_{f} F^m$ with $f(x)=e^{o(r^{\frac{1}{2}}(x))}$ near infinity, $\textup{Ric}_{g_{B}}^{h}\geqslant 0$ and $R_{g_{F}}>0$.
\end{corollary}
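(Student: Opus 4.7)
The plan is to recognize Corollary~\ref{sec:discussions-2} as a direct specialization of Corollary~\ref{Cor2-nonexistence}. A gradient expanding Ricci soliton is exactly a gradient Einstein-type metric with parameters
\[
\alpha=1,\qquad \beta=1,\qquad \mu=0,\qquad \rho=0,
\]
and with soliton function $\lambda$ equal to a negative constant. My first step is to substitute these values into the coefficients appearing in equation~\eqref{eq:lichnerowicz-type-equation} and in the hypotheses of Corollary~\ref{Cor2-nonexistence}.

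Next I would verify each hypothesis of item~\eqref{4a} of Corollary~\ref{Cor2-nonexistence}. Computing, $\sigma(m)=\frac{\alpha-2m\rho}{m[\alpha-\rho(1+m)]}=\frac{1}{m}>0$; the drift function becomes $w=\frac{\beta h}{\alpha-2m\rho}=h$, so the assumption $\textup{Ric}_{g_B}^{w}\geqslant 0$ is precisely $\textup{Ric}_{g_B}^{h}\geqslant 0$, which holds by hypothesis; the linear coefficient reduces to $\frac{\rho R_{g_B}+\lambda}{\alpha-2m\rho}=\lambda<0$, which satisfies the sign condition $\leqslant 0$; and the nonlinear coefficient reduces to $\frac{R_{g_F}(\alpha-m\rho)}{\alpha-2m\rho}=R_{g_F}>0$, matching the positivity requirement.

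Then I would check the two asymptotic hypotheses. Since $\rho=0$ and $\lambda$ is a constant, $\rho\nabla R_{g_B}+\nabla\lambda\equiv 0$, so $\sup_{B(x_0,R)}\|\rho\nabla R_{g_B}+\nabla\lambda\|=0=o(R^{-3/2})$ trivially; the growth condition $f(x)=e^{o(r^{1/2}(x))}$ near infinity is assumed outright. Invoking Corollary~\ref{Cor2-nonexistence}\eqref{4a} now yields the claimed nonexistence.

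I do not expect any significant obstacle here: the only nontrivial verification is that the Bakry–\'Emery tensor of the warped product soliton, when lifted to the base via Proposition~\ref{thm:wpgEtM-and-lfB}, really does have potential $h$ on $B^n$, so that the condition $\textup{Ric}_{g_B}^{h}\geqslant 0$ matches the drift $w=h$ produced by the parameter substitution. This is immediate since for a gradient expanding Ricci soliton warped product with nonconstant $f$ (the only case of interest; a standard product would not even be expanding with $R_{g_F}>0$) the potential lifts from the base by the cited proposition, so all hypotheses of Corollary~\ref{Cor2-nonexistence} are met and the result follows.
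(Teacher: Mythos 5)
Your proposal is correct and is exactly the route the paper intends: Section~\ref{Further-discussions} presents this corollary as a direct application of Corollary~\ref{Cor2-nonexistence}, and your parameter substitution $\alpha=\beta=1$, $\mu=\rho=0$, $\lambda<0$ constant, giving $\sigma(m)=1/m>0$, $w=h$, linear coefficient $\lambda\leqslant 0$ and nonlinear coefficient $R_{g_F}>0$, verifies item~\eqref{4a} precisely as required. The remaining hypotheses (vanishing of $\rho\gradient{\scalarcurvature<g_{B}>}+\gradient{\lambda}$ and the growth of $f$) are checked correctly, so nothing is missing.
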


\subsection{Einstein case}
Here, we work on the class of Einstein warped products $B^n\times_{f} F^m$ with $\textup{Ric}_{g_{B}}\geqslant 0$ and $f(x)=e^{o(r(x))}$ near infinity.

\begin{corollary}\label{sec:discussions-3}
Let $B^n\times_{f} F^m$ be an Einstein warped product with $\textup{Ric}_{g_{B}}\geqslant 0$, $R_g<0$ and $f(x)=e^{o(r(x))}$ near infinity. Then, it must be a standard Riemannian product with $R_{g_{F}}<0$.
\end{corollary}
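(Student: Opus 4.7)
The plan is to recast the Einstein warped product $B^n \times_f F^m$ (with $\textup{Ric}_g = \lambda g$ where $\lambda = R_g/(n+m) < 0$) as a degenerate instance of the gradient Einstein-type framework, taking $\alpha = 1$, $\beta = \mu = \rho = 0$, and soliton function equal to the Einstein constant $\lambda$, together with a trivial potential. Although this violates the blanket hypothesis $\beta \neq 0$ of Theorem \ref{thm:gEtM-c}, the relevant identities still follow from the standard warped-product curvature formulas: namely $\textup{Ric}_{g_B} - (m/f)\hessian{f} = \lambda g_B$ on the base, and the fiber is Einstein with constant scalar curvature $R_{g_F}$ satisfying
\[
\frac{R_{g_F}}{m} = \lambda f^2 + f\laplacian{f} + (m-1)\|\gradient{f}\|^2.
\]

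Setting $u = f^m$ (so that $\sigma(m) = 1/m$ and $w = 0$), this fiber equation is precisely the Lichnerowicz-type equation \eqref{eq:lichnerowicz-type-equation} specialized to these parameters, namely
\[
\Delta u + m\lambda u - R_{g_F}\, u^{1 - 2/m} = 0.
\]
Since $\rho = 0$ and $\lambda$ is constant, the decay hypothesis on $\|\rho\gradient{\scalarcurvature<g_B>} + \gradient{\lambda}\|$ is automatic, and $\textup{Ric}_{g_B}^{w} = \textup{Ric}_{g_B} \geq 0$ by assumption. Moreover $\sigma(m) > 0$, $(\rho R_{g_B} + \lambda)/(\alpha - 2m\rho) = \lambda < 0$, and $R_{g_F}(\alpha - m\rho)/(\alpha - 2m\rho) = R_{g_F}$.

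I would then split by the sign of $R_{g_F}$. If $R_{g_F} > 0$, the parameter set matches hypothesis \eqref{4a} of Corollary \ref{Cor2-nonexistence}, delivering a contradiction; hence $R_{g_F} \leq 0$. In that case the signs fit hypothesis \eqref{3a} of Corollary \ref{Cor1-triviality}, forcing $f$ to be constant. The warped metric then reduces to the standard Riemannian product $B \times F$, whose Einstein condition decouples to give $\textup{Ric}_{g_F} = \lambda g_F$, so $R_{g_F} = m\lambda < 0$ as claimed.

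The main obstacle is justifying the $\beta = 0$ degeneration: Corollaries \ref{Cor1-triviality} and \ref{Cor2-nonexistence} are phrased within the nondegenerate framework, yet their proofs pass through Theorem \ref{Th2}, which is a purely PDE statement about \eqref{PDE-Th2} and imposes no restriction on $\beta$. I would simply re-invoke the proof of Corollary \ref{Cor1-triviality} with $\varphi = w = 0$; in this setting $\textup{K}=0$, $\mathcal{A} = m\lambda$ is constant with $\mathcal{A}^+ = 0$, and both the gradient-of-coefficient term and the scalar-curvature drift term disappear, simplifying the Liouville step considerably. The collapse of these contributions is precisely what permits the slightly weaker growth requirement $f(x) = e^{o(r(x))}$ to be used in place of the $e^{o(r^{1/2}(x))}$ hypothesis that is needed in the full-generality applications.
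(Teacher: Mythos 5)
Your proposal is correct and follows essentially the same route as the paper: reduce to the Lichnerowicz-type equation for $u=f^{m}$ with constant $\mathcal{A}=m\lambda<0$ and vanishing drift, observe that the $\sqrt{\textup{K}}$, $\gamma_{\Delta_\varphi}$ and $\|\nabla\mathcal{A}\|$ terms in Theorem~\ref{Th2} collapse so the weaker growth $f=e^{o(r(x))}$ suffices for the Liouville step, conclude $f$ is constant, and read off $R_{g_F}<0$. The only (cosmetic) differences are that you split on the sign of $R_{g_F}$ up front whereas the paper derives $R_{g_F}<0$ from the constancy equation at the end, and that your worry about the $\beta=0$ degeneration can be sidestepped entirely by taking $\beta\neq0$ with constant potential $h$, which is how the paper implicitly treats the Einstein case.
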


\begin{proof}[Proof of Corollary~\ref{sec:discussions-3}]For Einstein manifolds we have $h$ constant. So, estimate IV \eqref{thm-2:eq6} becomes (cf. \cite[Eq. 2.14]{souplet2006sharp})
\[
\begin{aligned}
-G(\Delta \phi) & \leqslant \frac{\delta}{8} \psi G^2+ \frac{c(\delta)}{R^4}+\frac{c(\delta)\textup{K}}{R^2}.
\end{aligned}
\]
Therefore, from the hypothesis of Corollary~\ref{sec:discussions-3} we have the following gradient estimate for $u=f^{m}$
\begin{equation*}
\begin{split}
    \norm{\gradient{\ln u(x_{0})}}&\leqslant C\left\{1+o(r(x))-\ln u(x_{0})\right\}\Bigg{(}\dfrac{1}{R}\Bigg{)}, \qquad R\geqslant2,
\end{split}
\end{equation*}
where $x_{0}$ is an arbitrary point in $B^n$. 
Letting $R\to+\infty$, we have $\norm{\gradient{\ln u(x_{0})}}=0$ and then $u$ is constant on $B^n$.  Substituting $u(x)=c>0$ into \eqref{eq:lichnerowicz-type-equation} we get
\begin{equation*}
  0\leqslant\frac{R_{g_{F}}}{m}c^{-\frac{2}{m}}=\lambda=\frac{R_{g}}{n+m}<0,
\end{equation*}
which is a contradiction.
\end{proof}

Rimoldi \cite[Theorem~11]{rimoldi2011remark} proved that there is no Einstein warped product with $R_g<0$, bounded warping function and  $R_{g_{F}}\geqslant0$. Here, as an immediate consequence of Corollary~\ref{sec:discussions-3}, we complement Rimoldi's result for Einstein manifolds with an unbounded warping function.

\begin{corollary}\label{sec:discussions-4}There is no Einstein warped product $B^n\times_{f} F^m$ with $\textup{Ric}_{g_{B}}\geqslant 0$, $R_g<0$,  $f(x)=e^{o(r(x))}$ near infinity and $R_{g_{F}}\geqslant0$.
\end{corollary}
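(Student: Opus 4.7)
The plan is to proceed by contradiction, directly leveraging Corollary~\ref{sec:discussions-3}. Suppose that an Einstein warped product $B^n\times_{f} F^m$ exists with $\textup{Ric}_{g_B}\geqslant 0$, $R_g<0$, warping function satisfying $f(x)=e^{o(r(x))}$ near infinity, and $R_{g_F}\geqslant 0$. The assumptions of Corollary~\ref{sec:discussions-3} are satisfied (in particular, no hypothesis on the sign of $R_{g_F}$ is required there), so the conclusion of that corollary applies.

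The first step is therefore to invoke Corollary~\ref{sec:discussions-3} to deduce that $B^n\times_f F^m$ must be a standard Riemannian product (that is, $f$ is constant) and that $R_{g_F}<0$. The second and final step is to observe that the strict inequality $R_{g_F}<0$ obtained from Corollary~\ref{sec:discussions-3} is incompatible with the standing assumption $R_{g_F}\geqslant 0$. This contradiction shows that no such Einstein warped product can exist.

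There is essentially no obstacle: the corollary is genuinely immediate once Corollary~\ref{sec:discussions-3} is in place, and the only point of care is verifying that the three hypotheses shared by the two statements (namely $\textup{Ric}_{g_B}\geqslant 0$, $R_g<0$, and $f(x)=e^{o(r(x))}$ near infinity) are precisely those imposed in Corollary~\ref{sec:discussions-3}, so that its conclusion can be invoked without modification. The only novelty of Corollary~\ref{sec:discussions-4} is the additional sign hypothesis $R_{g_F}\geqslant 0$, which is exactly what generates the contradiction.
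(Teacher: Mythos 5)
Your proposal is correct and matches the paper exactly: the paper offers no separate argument, stating only that the result is an immediate consequence of Corollary~\ref{sec:discussions-3}, whose conclusion $R_{g_F}<0$ contradicts the added hypothesis $R_{g_F}\geqslant 0$. You have simply made explicit the one-line deduction the paper leaves implicit.
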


\begin{remark}In Corollaries~\ref{sec:discussions-3} and \ref{sec:discussions-4} we relax the growth condition for the warping function $f$ in Corollary~\ref{Cor2-nonexistence} since in this case the operator $\Delta_{\varphi}$ becomes $\Delta$ and the term $\gamma_{\driftedlaplacian{}{\varphi}{}}$ in Theorem~\ref{Th2} can be removed. Besides, the growth condition $f(x)=e^{o(r(x))}$ in Corollary~\ref{sec:discussions-3} is sharp due to example $\mathbb{H}^{n}=\mathbb{R}\times_{\cosh(t)}\mathbb{H}^{n-1}$. Note that the growth of $e^t$ and $\cosh(t)$ are the same. In addition, the growth condition $f(x)=e^{o(r(x))}$ in Corollary~\ref{sec:discussions-4} is also sharp due to $\mathbb{H}^{n}=\mathbb{R}\times_{e^{t}} \mathbb{R}^{n}$. For other examples of Einstein warped products $\reals\times_{e^{t}}F^{m}$ with $R_g<0$ and Ricci flat fiber we refer to Besse's book~\cite[Subsection 9.118]{besse2007einstein}.
\end{remark}

For Ricci flat warped products $B^n\times_{f} F^m$, Case~\cite{case2010nonexistence} obtained a gradient estimate for the warping function $f$ and proved that there is no Ricci flat warped product with nonconstant warping function and $R_{g_{F}}\leqslant0$. Here, in the setting of Ricci flat warped products with $\textup{Ric}_{g_{B}}\geqslant 0$, we prove the following rigidity result.

\begin{corollary}
Let $B^n\times_{f} F^m$ be a Ricci flat warped product with $\textup{Ric}_{g_{B}}\geqslant 0$ and $f(x)=e^{o(r(x))}$ near infinity. Then, it must be a standard Riemannian product with $R_{g_{F}}=0$.
\end{corollary}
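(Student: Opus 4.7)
The strategy is to specialize Theorem~\ref{thm:gEtM-c}(b) to the Ricci flat Einstein parameters $\alpha=1,\ \beta=\mu=\rho=0,\ \lambda=0$ and then apply the gradient estimate of Theorem~\ref{Th2}. The specialization of item (b) gives $\tfrac{1}{m}R_{g_F}=f\Delta f+(m-1)\|\nabla f\|^2$, which rewrites as the PDE
\[
\Delta u - R_{g_F}\,u^{1-2/m}=0 \qquad \text{on } (B^n, g_B),
\]
with $u:=f^m$. This is exactly \eqref{PDE-Th2} with $\varphi\equiv 0$, $\mathcal{A}\equiv 0$, $\mathcal{B}=-R_{g_F}$, and $\varepsilon=1-2/m$.

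With $\textup{Ric}_{g_B}\geqslant 0$ we apply Theorem~\ref{Th2} with $K=0$. Since $\varphi\equiv 0$, Bishop's volume comparison replaces Wei--Wylie's in estimate IV, causing the $\gamma_{\Delta_\varphi}/\sqrt{R}$ contribution to disappear exactly as in the proof of Corollary~\ref{sec:discussions-3} --- this is precisely what relaxes the growth condition from $f=e^{o(r^{1/2})}$ to $f=e^{o(r)}$. Only the nonlinear term $[(\varepsilon-1+p/(q-p\ln u))\mathcal{B}]^+$ remains. To kill it we split by the sign of $R_{g_F}$: for $R_{g_F}\leqslant 0$ (so $\mathcal{B}\geqslant 0$) the choice $p=1,\ q=m+\ln D$ with $D=\sup_{B(x_0,R)}u$ (as in Corollary~\ref{Cor1-triviality}(a) with $\sigma(m)=1/m$) gives $\varepsilon-1+p/(q-p\ln u)\leqslant -1/m<0$, killing the bracket; the resulting estimate is $\|\nabla\ln u(x_0)\|\leqslant C(m+\ln D-\ln u(x_0))/R$, and the hypothesis $\ln D=o(R)$ forces $\nabla\ln u(x_0)=0$ as $R\to\infty$. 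For $R_{g_F}>0$ (so $\mathcal{B}<0$) the mirror choice from Corollary~\ref{Cor1-triviality}(b), namely $q=1+\ln(D/u(x_1))+p\ln u(x_1)$ with $p$ large enough to force $\varepsilon-1+p/(q-p\ln u(x_1))\geqslant 0$, again annihilates the bracket and the same limit argument yields $u$ constant. Substituting $u\equiv c>0$ back into the PDE gives $R_{g_F}\,c^{1-2/m}=0$, so $R_{g_F}=0$ and $f$ is constant, proving that $B^n\times_f F^m$ is the standard Riemannian product with Ricci flat fiber.

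The main technical subtlety lies in the case $R_{g_F}>0$: the parameter $p$ in Theorem~\ref{Th2} must be constant, while the constraint $p/(1+\ln(D/u(x_1)))\geqslant 2/m$ needed to make the nonlinear bracket vanish may force $p$ to grow with $R$ since $\ln(D/u(x_1))$ can be unbounded. I would resolve this either by tracking the $p$-dependence of the constant $C$ in the gradient estimate and letting $p=p(R)$ diverge slowly (exploiting the $1/R$ decay to absorb any polynomial growth in $p$), or, more cleanly, by first proving a nonexistence statement in the spirit of Corollary~\ref{Cor2-nonexistence} that rules out $R_{g_F}>0$ under the present hypotheses, so that only the $R_{g_F}\leqslant 0$ case --- which is a direct specialization of Corollary~\ref{sec:discussions-3} --- needs to be treated via the gradient estimate.
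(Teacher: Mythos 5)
Your proposal follows the same route the paper intends for this corollary: specialize the Lichnerowicz-type equation \eqref{eq:lichnerowicz-type-equation} to $u=f^{m}$, $\varepsilon=1-2/m$, $\mathcal{A}\equiv 0$, $\mathcal{B}=-R_{g_F}$, apply Theorem~\ref{Th2} with $\textup{K}=0$ and $\varphi$ constant so that the $\gamma_{\Delta_{\varphi}}$ term drops and the growth hypothesis relaxes to $f=e^{o(r)}$ exactly as in Corollary~\ref{sec:discussions-3}, kill the nonlinear term by the sign dichotomy on $R_{g_F}$ using the $(p,q)$ choices of Corollary~\ref{Cor1-triviality}, and back-substitute the constant $u$ into the PDE to force $R_{g_F}=0$. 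The only discrepancy is cosmetic (it is the Laplacian comparison theorem, not Bishop's volume comparison, that replaces Wei--Wylie's estimate), and the subtlety you flag about $p$ depending on the maximum point $x_{1}$ when $R_{g_F}>0$ is genuine but is equally present in, and silently glossed over by, the paper's own proof of item (b) of Corollary~\ref{Cor1-triviality}.
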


\subsection{Gradient $\rho$-Einstein case}
Recall that a gradient $\rho$-Einstein soliton $(M^{d},g)$ is called a gradient Einstein soliton if $\rho=1/2$, a gradient traceless Ricci soliton if $\rho=1/d$, and a gradient Schouten soliton if $\rho=1/2(d-1)$. As an application of Corollary~\ref{Cor2-nonexistence}, we deduce the following three results.

\begin{corollary}There is no gradient Einstein soliton warped product 
  $B^{n}\times_{f}F^{m}$, $m\geq3$, with potential function $h$, warping function $f(x)=e^{o(r^{\frac{1}{2}}(x))}$ near infinity and geometric properties
\[\textup{Ric}^{w}_{g_B}\geqslant0,\quad  R_{g_{B}}=\text{cte}\geqslant-2\lambda, \quad R_{g_{F}}<0,\quad \mbox{where} \quad w=-\frac{h}{m-1}.
\]
\end{corollary}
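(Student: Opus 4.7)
The plan is to cast the gradient Einstein soliton equation
\[
\textup{Ric} + \nabla^{2}h = \lambda g + \tfrac{1}{2}R_{g}\, g
\]
as the instance of the fundamental equation~\eqref{fundamental} with parameters $\alpha = \beta = 1$, $\mu = 0$ and $\rho = 1/2$, and then apply Corollary~\ref{Cor2-nonexistence} directly.

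First I would specialize the auxiliary quantities of the Lichnerowicz-type equation \eqref{eq:lichnerowicz-type-equation}. For the parameters above and $m \geq 3$, a direct computation gives $\alpha - 2m\rho = 1-m$, $\alpha - m\rho = (2-m)/2$, $\alpha - \rho(m+1) = (1-m)/2$, hence
\[
\sigma(m) = \frac{\alpha - 2m\rho}{m[\alpha - \rho(m+1)]} = \frac{2}{m} > 0,
\qquad
w = \frac{\beta h}{\alpha - 2m\rho} = -\frac{h}{m-1},
\]
so the drift weight produced by the framework coincides with the one in the hypothesis $\textup{Ric}^{w}_{g_{B}} \geq 0$.

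Next I would translate the sign assumptions into the coefficient signs required by Corollary~\ref{Cor2-nonexistence}. The hypothesis $R_{g_{B}} \geq -2\lambda$ gives $\rho R_{g_{B}} + \lambda = R_{g_{B}}/2 + \lambda \geq 0$, and dividing by $\alpha - 2m\rho = 1-m < 0$ yields $(\rho R_{g_{B}} + \lambda)/(\alpha - 2m\rho) \leq 0$. Moreover,
\[
\frac{\alpha - m\rho}{\alpha - 2m\rho} = \frac{(2-m)/2}{1-m} = \frac{m-2}{2(m-1)} > 0 \quad \text{for } m \geq 3,
\]
so the constant $R_{g_{F}}(\alpha - m\rho)/(\alpha - 2m\rho)$ has the sign required to land in the appropriate branch of Corollary~\ref{Cor2-nonexistence}(a), with $R_{g_{F}} \neq 0$ since $R_{g_{F}}<0$.

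Finally, I would check the remaining regularity and growth assumptions. Because $R_{g_{B}}$ is constant and the soliton parameter $\lambda$ of a gradient $\rho$-Einstein soliton is constant, one has $\rho \nabla R_{g_{B}} + \nabla \lambda \equiv 0$, so the asymptotic condition $\sup_{B(x_{0},R)}\|\rho\nabla R_{g_{B}} + \nabla \lambda\| = o(R^{-3/2})$ is trivially satisfied; the growth condition $f(x) = e^{o(r^{1/2}(x))}$ is a direct hypothesis. Therefore, Corollary~\ref{Cor2-nonexistence} applies and produces the stated non-existence. I expect the main obstacle of the argument to be the careful sign bookkeeping at the previous step: one must track how the negativity of $\alpha - 2m\rho$ and $\alpha - m\rho$ for $m \geq 3$ interacts with $R_{g_{B}} \geq -2\lambda$ and $R_{g_{F}} < 0$ in order to confirm that we are in the correct alternative of Corollary~\ref{Cor2-nonexistence}(a).
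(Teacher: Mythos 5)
Your overall strategy --- specialize to $\alpha=\beta=1$, $\mu=0$, $\rho=1/2$ and feed the resulting coefficients into Corollary~\ref{Cor2-nonexistence} --- is exactly the route the paper intends (it offers no further detail than ``as an application of Corollary~\ref{Cor2-nonexistence}''), and your computations of $\sigma(m)=2/m>0$, of $w=-h/(m-1)$, of $(\rho R_{g_B}+\lambda)/(\alpha-2m\rho)\leqslant 0$, and of the trivially satisfied asymptotic condition on $\rho\nabla R_{g_B}+\nabla\lambda$ are all correct. The problem is the decisive sign check, which you defer to ``careful bookkeeping'' and then assert rather than verify. You correctly compute $(\alpha-m\rho)/(\alpha-2m\rho)=(m-2)/(2(m-1))>0$ for $m\geqslant 3$; multiplying by the hypothesis $R_{g_F}<0$ therefore gives
\[
\frac{R_{g_F}(\alpha-m\rho)}{\alpha-2m\rho}=R_{g_F}\cdot\frac{m-2}{2(m-1)}<0,
\]
whereas item~(a) of Corollary~\ref{Cor2-nonexistence} requires this constant to be \emph{positive}: that positivity is precisely what makes the right-hand side of the terminal identity $\frac{\rho R_{g_B}+\lambda}{\alpha-2m\rho}=\frac{R_{g_F}(\alpha-m\rho)}{m(\alpha-2m\rho)}c^{-2\sigma(m)}$ strictly positive and hence incompatible with the nonpositive left-hand side. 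So the hypotheses of Corollary~\ref{Cor2-nonexistence}(a) are not satisfied, and the corollary cannot be invoked as you do.

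Concretely, with the signs you actually obtain ($\sigma(m)>0$, middle coefficient $\leqslant 0$, third coefficient $<0$) you land in the sign configuration of Corollary~\ref{Cor1-triviality}(a): there $\mathcal{A}\leqslant 0$ and $\mathcal{B}\geqslant 0$, the gradient estimate still forces $u$ (hence $f$) to be constant, but substituting $u\equiv c$ into \eqref{eq:lichnerowicz-type-equation} yields $\rho R_{g_B}+\lambda=\frac{(2-m)R_{g_F}}{2m}\,c^{-4/m}$, in which both sides are nonnegative under the stated hypotheses; no contradiction results, so nonexistence does not follow from this argument. To close the gap you would need either an independent argument ruling out the constant solution, or to observe that the sign pattern demanded by Corollary~\ref{Cor2-nonexistence}(a) is achieved when $R_{g_F}>0$ (or when $R_{g_B}=-2\lambda$ exactly, which makes the middle coefficient vanish and lets the parenthetical alternative $\neq 0$ apply). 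As written, your verification does not establish the statement, and the discrepancy you have uncovered is worth flagging rather than papering over.
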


\begin{corollary}There is no gradient traceless Ricci soliton warped product 
  $B^{n}\times_{f}F^{m}$, $n\neq m$, $n\neq1$ with potential function $h$, warping function $f(x)=e^{o(r^{\frac{1}{2}}(x))}$ near infinity and geometric properties
\[\textup{Ric}^{w}_{g_B}\geqslant0,\quad  R_{g_{B}}=\text{cte}\leqslant-\lambda (n+m), \quad R_{g_{F}}>0,\quad \mbox{where} \quad w=\frac{h(n+m)}{n-m}.
\]
\end{corollary}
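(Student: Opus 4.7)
The plan is to directly apply Corollary~\ref{Cor2-nonexistence} after identifying the parameters of a gradient traceless Ricci soliton. By definition, this is the particular case of~\eqref{fundamental} with
\[
\alpha=1,\qquad \beta=1,\qquad \mu=0,\qquad \rho=\frac{1}{n+m},
\]
so that the total dimension is $d=n+m$. First I would compute the three sign-sensitive quantities appearing in~\eqref{eq:lichnerowicz-type-equation} and in Corollary~\ref{Cor2-nonexistence}. A direct calculation gives
\[
\alpha-2m\rho=\frac{n-m}{n+m},\qquad \alpha-\rho(1+m)=\frac{n-1}{n+m},\qquad \alpha-m\rho=\frac{n}{n+m},
\]
and therefore
\[
\sigma(m)=\frac{n-m}{m(n-1)},\qquad \frac{\rho R_{g_B}+\lambda}{\alpha-2m\rho}=\frac{R_{g_B}+\lambda(n+m)}{n-m},\qquad \frac{R_{g_F}(\alpha-m\rho)}{\alpha-2m\rho}=\frac{nR_{g_F}}{n-m}.
\]
The drift weight from Corollary~\ref{Cor2-nonexistence} coincides with the one in the hypothesis, namely $w=\beta h/(\alpha-2m\rho)=h(n+m)/(n-m)$, and the conditions $n\neq m$ and $n\neq 1$ ensure $\sigma(m)\neq 0$.

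Next I would split the argument according to the sign of $n-m$. If $n>m$, then $\sigma(m)>0$; using $R_{g_B}\leqslant -\lambda(n+m)$ one gets $(\rho R_{g_B}+\lambda)/(\alpha-2m\rho)\leqslant 0$; and since $R_{g_F}>0$ and $n-m>0$ one has $R_{g_F}(\alpha-m\rho)/(\alpha-2m\rho)>0$. These are precisely the hypotheses of item~\eqref{4a} of Corollary~\ref{Cor2-nonexistence}. If $n<m$, all three signs flip and one lands inside item~\eqref{4b}. So in both cases Corollary~\ref{Cor2-nonexistence} will apply once the two remaining asymptotic assumptions are verified.

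For the asymptotic behavior of $\rho\nabla R_{g_B}+\nabla\lambda$, note that $R_{g_B}$ is constant by hypothesis and that for a gradient $\rho$-Einstein soliton the soliton function $\lambda$ is a real constant; hence this term is identically zero and the required $o(R^{-3/2})$ decay is trivial. The growth assumption on $f$ is already part of the hypothesis, and $\textup{Ric}_{g_B}^{w}\geqslant 0$ is also assumed with the correct $w$. Thus Corollary~\ref{Cor2-nonexistence} yields the desired nonexistence.

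The only nontrivial point is the bookkeeping of signs, since the sign of $n-m$ controls simultaneously the signs of $\sigma(m)$, of the linear coefficient, and of the nonlinear coefficient of~\eqref{eq:lichnerowicz-type-equation}; this is precisely why the two excluded dimensions $n=m$ (which would force $\sigma(m)=0$ and violate the standing assumption $\alpha\notin\{2m\rho,(1+m)\rho\}$) and $n=1$ (which would make the denominator in $\sigma(m)$ vanish) must be ruled out in the statement. Everything else reduces to substitution into the previously established corollary.
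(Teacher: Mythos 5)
Your proposal is correct and follows exactly the route the paper intends: the corollary is stated as an immediate application of Corollary~\ref{Cor2-nonexistence} with the traceless Ricci soliton parameters $\alpha=\beta=1$, $\mu=0$, $\rho=1/(n+m)$, and your computations of $\sigma(m)$, the linear and nonlinear coefficients, and the drift $w$ all check out, with the two cases $n>m$ and $n<m$ landing in items~\eqref{4a} and~\eqref{4b} respectively. The observation that $n=m$ and $n=1$ correspond precisely to the excluded values $\alpha=2m\rho$ and $\alpha=(1+m)\rho$ is a nice touch that the paper leaves implicit.
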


\begin{corollary}\label{sec:discussions-5}There is no gradient Schouten soliton warped product 
  $B^{n}\times_{f}F^{m}$, $n\neq1$, with potential function $h$, warping function $f(x)=e^{o(r^{\frac{1}{2}}(x))}$ near infinity and geometric properties
\[\textup{Ric}^{w}_{g_B}\geqslant0,\  R_{g_{B}}=\text{cte}\leqslant-2\lambda(n+m-1), \ R_{g_{F}}>0,\ \mbox{where} \ w=\frac{h(n+m-1)}{n-1}.
  \]
\end{corollary}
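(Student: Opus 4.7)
The plan is to exhibit a gradient Schouten soliton warped product as a particular instance of a gradient Einstein-type warped product and then invoke Corollary~\ref{Cor2-nonexistence}\eqref{4a}. Recall that a gradient Schouten soliton satisfies $\riccitensor + \hessian{h} = (\rho R_g + \lambda)g$ with soliton constant $\lambda$ and $\rho = 1/[2(n+m-1)]$, so it fits into \eqref{fundamental} with parameters $\alpha = 1$, $\beta = 1$, $\mu = 0$ and this $\rho$. Since the warping function is assumed nonconstant (otherwise the product is trivial and the statement is vacuous), Proposition~\ref{thm:wpgEtM-and-lfB}\eqref{1a} together with the cited result from~\cite{agila2022geometrical} ensures that the potential $h$ descends to the base, so Theorem~\ref{thm:gEtM-c} applies and $u = f^{1/\sigma(m)}$ solves \eqref{eq:lichnerowicz-type-equation}.

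Next I would compute the coefficients entering Corollary~\ref{Cor2-nonexistence}. A direct calculation gives
\begin{equation*}
\alpha - 2m\rho = \frac{n-1}{n+m-1}, \qquad \alpha - m\rho = \frac{2n+m-2}{2(n+m-1)}, \qquad \alpha - \rho(1+m) = \frac{2n+m-3}{2(n+m-1)},
\end{equation*}
all strictly positive since $n \neq 1$ forces $n \geqslant 2$. Therefore
\begin{equation*}
\sigma(m) = \frac{2(n-1)}{m(2n+m-3)} > 0, \qquad w = \frac{\beta h}{\alpha - 2m\rho} = \frac{h(n+m-1)}{n-1},
\end{equation*}
which matches the weight $w$ stated in the corollary, and the Bakry--Émery condition $\textup{Ric}^{w}_{g_B} \geqslant 0$ is therefore given by hypothesis. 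The assumption $R_{g_B} \leqslant -2\lambda(n+m-1)$ is equivalent to $\rho R_{g_B} + \lambda \leqslant 0$, hence $(\rho R_{g_B}+\lambda)/(\alpha - 2m\rho) \leqslant 0$; meanwhile $R_{g_F} > 0$ together with the positivity of $\alpha - m\rho$ and $\alpha - 2m\rho$ yields $R_{g_F}(\alpha - m\rho)/(\alpha - 2m\rho) > 0$. All sign requirements of item~\eqref{4a} of Corollary~\ref{Cor2-nonexistence} are thus satisfied.

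Finally, the asymptotic decay hypothesis $\sup_{B(x_0,R)} \|\rho \gradient{R_{g_B}} + \gradient{\lambda}\| = o(R^{-3/2})$ holds trivially, since $R_{g_B}$ is constant by assumption and $\lambda$ is the soliton constant, so the gradient in question vanishes identically. The growth condition $f(x) = e^{o(r^{1/2}(x))}$ is in the hypotheses of the corollary. Applying Corollary~\ref{Cor2-nonexistence} produces the claimed nonexistence. The only point requiring genuine care is the chain of sign verifications above, together with checking that the degeneracies $\alpha = 2m\rho$ and $\alpha = (1+m)\rho$ are avoided (the first fails precisely when $n=1$, which is excluded, and the second fails when $2n+m=3$, automatically ruled out by $n \geqslant 2$ and $m \geqslant 1$); the rest is bookkeeping.
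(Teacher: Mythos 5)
Your proposal is correct and follows exactly the route the paper intends: the paper states this corollary as a direct application of Corollary~\ref{Cor2-nonexistence}, and your identification of the Schouten soliton as the Einstein-type metric with $\alpha=\beta=1$, $\mu=0$, $\rho=1/[2(n+m-1)]$, together with the sign computations $\alpha-2m\rho=\tfrac{n-1}{n+m-1}>0$, $\sigma(m)>0$, $\rho R_{g_B}+\lambda\leqslant0$ and $R_{g_F}(\alpha-m\rho)/(\alpha-2m\rho)>0$, verifies all hypotheses of item~(a) of that corollary. The remaining checks (the vanishing of $\rho\nabla R_{g_B}+\nabla\lambda$, the nondegeneracy conditions $\alpha\neq2m\rho$, $\alpha\neq(1+m)\rho$, and the matching of $w$) are all done correctly.
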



As it was observed by Borges~\cite{borges2024rigidity}, one notable example of gradient Schouten soliton related to the class of rigid solitons is constructed as follows. Consider $n \geq 3$, $m < n$ and $\lambda \in \mathbb{R}$. Take an Einstein manifold $\left(F^m, g_F\right)$ of scalar curvature
\[
R_{g_F}=\frac{2(n-1) m \lambda}{2(n-1)-m}.
\]
For any $(x, p) \in \mathbb{R}^{n-m} \times F^m$ defines 
\[
h(x, p)=\frac{1}{2}\left(\frac{R_{g_F}}{2(n-1)}+\lambda\right)\|x\|^2,
\]
where $\|x\|^2$ denotes the Euclidean norm. It follows that $\mathbb{R}^{n-m} \times F^m$ is a gradient Schouten soliton with metric $g=g_{Euc}+g_F$ and potential function $h$. In the case $\lambda>0$, this example shows that the condition $R_{g_{B}}=cte\leqslant-2\lambda(n+m-1)$ in Corollary~\ref{sec:discussions-5} is necessary.

\section*{Acknowledgements}
The first author has been partially supported by Conselho Nacional de Desenvolvimento Científico e Tecnológico (CNPq), Grant 310458/2021-8, and Fundação de Amparo à Pesquisa do Estado de São Paulo (FAPESP), Grant 2023/11126-7. 

\bibliography{ref/main}

\begin{thebibliography}{10}

\bibitem{agila2022geometrical}
Enrique Agila and José Gomes.
\newblock Geometric and analytic results for {E}instein solitons.
\newblock {\em Mathematische Nachrichten}, online version, 2024.

\bibitem{Guglielmo2016}
Guglielmo Albanese and Marco Rigoli.
\newblock Lichnerowicz-type equations on complete manifolds.
\newblock {\em Advances in Nonlinear Analysis}, 5 (3): 223-250, 2016.

\bibitem{besse2007einstein}
Arthur Besse.
\newblock {\em Einstein manifolds}.
\newblock Springer Science \& Business Media, 2007.

\bibitem{bianchini2015yamabe}
Bruno Bianchini, Luciano Mari, and Marco Rigoli.
\newblock Yamabe type equations with sign-changing nonlinearities on non-compact {R}iemannian manifolds.
\newblock {\em Journal of Functional Analysis}, 268(1):1--72, 2015.

\bibitem{bishop1969}
Richard Bishop and Barrett O'Neill.
\newblock Manifolds of negative curvature.
\newblock {\em Transactions of the American Mathematical Society}, 145: 1-49, 1969.

\bibitem{bluman1989symmetries}
Georges Bluman and Sukeyuki Kumei.
\newblock {\em Symmetries and differential equations}, volume~81.
\newblock Springer Science \& Business Media, 1989.

\bibitem{Valter}
Valter Borges.
\newblock On noncompact warped product {R}icci solitons.
\newblock {\em Mathematische Nachrichten}, online version, 2024.

\bibitem{borges2024rigidity}
Valter Borges.
\newblock Rigidity of {B}ach-flat gradient {S}chouten solitons.
\newblock {\em Manuscripta Mathematica}, pages 1--11, 2024.

\bibitem{Borges}
Valter Borges and Keti Tenenblat.
\newblock Ricci almost solitons on semi-{R}iemannian warped products.
\newblock {\em Mathematische Nachrichten}, 295(1):22--43, 2022.

\bibitem{brandolini1998positive}
Luca Brandolini, Marco Rigoli, and Alberto Setti.
\newblock Positive solutions of {Y}amabe type equations on complete manifolds and applications.
\newblock {\em Journal of Functional Analysis}, 160(1):176--222, 1998.

\bibitem{calabi1958extension}
Eugenio Calabi.
\newblock An extension of {E}. {H}opf’s maximum principle with an application to {R}iemannian geometry.
\newblock {\em Duke Mathematical Journal}, 25(1):45--56, 1958.

\bibitem{case2010nonexistence}
Jeffrey Case.
\newblock The nonexistence of quasi-{E}instein metrics.
\newblock {\em Pacific Journal of Mathematics}, 248(2):277--284, 2010.

\bibitem{catino2016geometry}
Giovanni Catino, Paolo Mastrolia, Dario Monticelli, and Marco Rigoli.
\newblock {On the geometry of gradient {E}instein-type manifolds}.
\newblock {\em Pacific Journal of Mathematics}, 286(1):39--67, 2016.

\bibitem{cheng1975differential}
Shiu Cheng and Shing Yau.
\newblock Differential equations on {R}iemannian manifolds and their geometric applications.
\newblock {\em Communications on Pure and Applied Mathematics}, 28(3):333--354, 1975.

\bibitem{chow2007ricci}
Bennett Chow, Sun-Chin Chu, David Glickenstein, Christine Guenther, Jim Isenberg, Tom Ivey, Dan Knopf, Peng Lu, Feng Luo, and Lei Ni.
\newblock {\em The Ricci flow: techniques and applications}, volume 135.
\newblock American Mathematical Society Providence, 2007.

\bibitem{del2013entire}
Manuel Del~Pino, Michal Kowalczyk, and Juncheng Wei.
\newblock Entire solutions of the {A}llen-{C}ahn equation and complete embedded minimal surfaces of finite total curvature in $\mathbb{R}^3$.
\newblock {\em Journal of Differential Geometry}, 93(1):67--131, 2013.

\bibitem{eminenti2008ricci}
Manolo Eminenti, Gabriele La~Nave, and Carlo Mantegazza.
\newblock Ricci solitons: the equation point of view.
\newblock {\em Manuscripta Mathematica}, 127:345--367, 2008.

\bibitem{feitosa2019gradient}
Francisco Feitosa, Antonio Filho, José Gomes, and Romildo Pina.
\newblock Gradient {R}icci almost soliton warped product.
\newblock {\em Journal of Geometry and Physics}, 143:22--32, 2019.

\bibitem{gidas1981global}
Basilis Gidas and Joel Spruck.
\newblock Global and local behavior of positive solutions of nonlinear elliptic equations.
\newblock {\em Communications on Pure and Applied Mathematics}, 34(4):525--598, 1981.

\bibitem{gomes2019note}
Jos{\'e} Gomes.
\newblock A note on gradient {E}instein-type manifolds.
\newblock {\em Differential Geometry and its Applications}, 66:13--22, 2019.

\bibitem{gomes2021note}
José Gomes, Marcus Marrocos, and Adrian Ribeiro.
\newblock A note on gradient {R}icci soliton warped metrics.
\newblock {\em Mathematische Nachrichten}, 294(10):1879--1888, 2021.

\bibitem{hamilton1993matrix}
Richard Hamilton.
\newblock Matrix {H}arnack estimate for the heat equation.
\newblock {\em Communications in Analysis and Geometry}, 1(1):113--126, 1993.

\bibitem{li1986parabolic}
Peter Li and Shing Yau.
\newblock On the parabolic kernel of the {S}chr{\"{o}}dinger operator.
\newblock {\em Acta Mathematica}, 156(1):153--201, 1986.

\bibitem{olver2000applications}
Peter Olver.
\newblock {\em Applications of Lie groups to differential equations}, volume 107.
\newblock Springer Science \& Business Media, 2000.

\bibitem{o1983semi}
Barrett O'neill.
\newblock {\em Semi-{R}iemannian geometry with applications to relativity}.
\newblock Academic press, 1983.

\bibitem{rimoldi2011remark}
Michele Rimoldi.
\newblock A remark on {E}instein warped products.
\newblock {\em Pacific Journal of Mathematics}, 252(1):207--218, 2011.

\bibitem{Song2010}
Xianfa Song and Lin Zhao.
\newblock Gradient estimates for the elliptic and parabolic {L}ichnerowicz equations on compact manifolds.
\newblock {\em Zeitschrift für Angewandte Mathematik und Physik}, 61 (4): 655-662, 2010.

\bibitem{souplet2006sharp}
Philippe Souplet and Qi~Zhang.
\newblock Sharp gradient estimate and {Y}au's {L}iouville theorem for the heat equation on noncompact manifolds.
\newblock {\em Bulletin of the London Mathematical Society}, 38(6):1045--1053, 2006.

\bibitem{tokura2022nonexistence}
Willian Tokura, Levi Adriano, and Marcelo Pina, Romildo~Barboza.
\newblock The nonexistence of gradient almost {R}icci solitons warped product.
\newblock {\em Differential Geometry and its Applications}, 82:101884, 2022.

\bibitem{wei2009comparison}
Guofang Wei and Will Wylie.
\newblock Comparison geometry for the {B}akry-{E}mery {R}icci tensor.
\newblock {\em Journal of Differential Geometry}, 83(2):337--405, 2009.

\bibitem{wu2019gradient}
Jia-Yong Wu.
\newblock Gradient estimates for a nonlinear parabolic equation and {L}iouville theorems.
\newblock {\em Manuscripta Mathematica}, 159:511--547, 2019.

\bibitem{yamabe1960deformation}
Hidehiko Yamabe.
\newblock On a deformation of {R}iemannian structures on compact manifolds.
\newblock {\em Osaka Math. J.}, 12(1):21--37, 1960.

\end{thebibliography}

\bibliographystyle{plain}

\end{document}